\newtheorem{thm}{Theorem}[section]
\newtheorem{Lemma}[thm]{Lemma}
\newtheorem{Proposition}[thm]{Proposition}
\newtheorem{Corollary}[thm]{Corollary}
\newtheorem*{thm*}{Theorem}
\newtheorem{theoremn}{Theorem}
\theoremstyle{definition}
\newtheorem{Construction}[thm]{Construction}
\newtheorem{Definition}[thm]{Definition}
\newtheorem{Remark}[thm]{Remark}
\newtheorem{Example}[thm]{Example}
\newtheorem{say}[thm]{}
\newcommand{\Aut}{\operatorname{Aut}}
\renewcommand{\P}{\mathbb{P}}
\newcommand{\cM}{\overline{\mathcal{M}}}
\DeclareMathOperator{\Cl}{Cl}
\DeclareMathOperator{\NE}{NE}
\DeclareMathOperator{\Pic}{Pic}
\DeclareMathOperator{\Eff}{Eff}
\DeclareMathOperator{\mult}{mult}
\DeclareMathOperator{\Exc}{Exc}
\DeclareMathOperator{\Sing}{Sing}
\DeclareMathOperator{\ldim}{ldim}
\DeclareMathOperator{\Vol}{Vol}
\DeclareMathOperator{\Cone}{Cone}
\DeclareMathOperator{\Nef}{Nef}
\DeclareMathOperator{\Mov}{Mov}
\newcommand\Span[1]{\langle{#1}\rangle}
\newcommand{\quot}{/\hspace{-1.2mm}/}
\begin{document}
\title[\resizebox{5.5in}{!}{Birational geometry of moduli spaces of configurations of points on the line}]{Birational geometry of moduli spaces of configurations of points on the line}

\author[Michele Bolognesi]{Michele Bolognesi}
\address{\sc Michele Bolognesi\\
IMAG - Universit\'e de Montpellier\\
Place Eug\`ene Bataillon\\
34095 Montpellier Cedex 5\\ France}
\email{michele.bolognesi@umontpellier.fr}

\author[Alex Massarenti]{Alex Massarenti}
\address{\sc Alex Massarenti\\ Dipartimento di Matematica e Informatica, Universit\`a di Ferrara, Via Machiavelli 30, 44121 Ferrara, Italy}
\email{alex.massarenti@unife.it}

\date{\today}
\subjclass[2010]{Primary 14D22, 14H10, 14H37; Secondary 14N05, 14N10, 14N20}
\keywords{Mori Dream Spaces, Moduli of curves}

\begin{abstract}
In this paper we study the geometry of GIT configurations of $n$ ordered points on $\mathbb{P}^1$ both from the the birational and the biregular viewpoint. In particular, we prove that any extremal ray of the Mori cone of effective curves of the quotient $(\mathbb{P}^1)^n\quot PGL(2)$, taken with the symmetric polarization, is generated by a one dimensional boundary stratum of the moduli space. Furthermore, we develop some technical machinery that we use to compute the canonical divisor and the Hilbert polynomial of $(\mathbb{P}^1)^n\quot PGL(2)$ in its natural embedding, and its automorphism group.
\end{abstract}

\maketitle
\tableofcontents

\section*{Introduction}
The aim of this paper is to explicitly study, both from the birational and the projective viewpoint, the compactification of $\mathcal{M}_{0,n}$ given by the GIT quotient $\Sigma_m : = (\mathbb{P}^1)^{m+3}\quot PGL(2)$ of configurations of $n = m+3$ ordered points on $\P^1$ with respect to the symmetric polarization on $(\mathbb{P}^1)^n$. The aim of our notation is to stress the dimension of the space. 

For instance, $\Sigma_2$ is a del Pezzo surface of degree five, it is well-known that $\Sigma_2$ can be embedded in $\mathbb{P}^5$ as a smooth surface of degree five that is a blow-up of $\mathbb{P}^2$ in four general points, the Mori cone of $\Sigma_2$ is generated by the exceptional divisor  and the strict transforms of the lines through two of the blown-up points, and $\Aut(\Sigma_2)\cong S_5$. The next GIT quotient $\Sigma_3$ can be embeded in $\mathbb{P}^4$ as the Segre cubic \cite{Do15}, that is the unique, up to automorphisms of $\mathbb{P}^4$, cubic hypersurface in $\mathbb{P}^4$ with ten nodes, and $\Aut(\Sigma_3)\cong S_6$. Our aim is to generalize these classical results in arbitrary dimension. 

When $m$ is even the variety $\Sigma_m$ is smooth. Furthermore, it is a Mori dream space. Mori dream spaces, introduced in \cite{HK00}, are roughly speaking normal projective varieties behaving very well with respect to the minimal model program. Indeed the birational geometry of a Mori dream space can be encoded in some finite data, namely its cone of effective divisors together with a chamber decomposition on it called the Mori chamber decomposition of the effective cone. We refer to Section \ref{MDS} and the references therein for the rigorous definition and special properties of Mori dream spaces. By \cite[Section 2]{HK00} the Mori chamber decomposition of $\Sigma_m$ may be read off from the VGIT decomposition of the ample cone of $(\mathbb{P}^1)^{m+3}$. However, in this paper we prefer to adopt an explicit birational approach in the spirit of what has been done in \cite{AM16} for blow-ups of projective spaces, and in \cite{AC16} for the Fano variety of linear spaces contained in the complete intersection of two odd-dimensional quadrics.

In one of the most celebrated papers \cite{DM69} in the history of algebraic geometry P. Deligne and D. Mumford proved that there exists an irreducible scheme $\mathcal{M}_{g,n}$ coarsely representing the moduli functor of $n$-pointed genus $g$ smooth curves. Furthermore, they provided a compactification $\overline{\mathcal{M}}_{g,n}$ of $\mathcal{M}_{g,n}$ adding the so-called Deligne-Mumford stable curves as boundary points. Afterwards other compactifications of $\mathcal{M}_{g,n}$ have been introduced, see for instance \cite{Ha}. 

The moduli space $\cM_{0,n}$ is stratified by topological type as follows: a codimension $1$-stratum is an irreducible component of the locus of $\cM_{0,n}$ given by curves having at least one node. Thus the general element of a codimension $1$-stratum has two irreducible components. The resulting divisor is called a boundary divisor. We continue increasing the number of nodes of the general curve of the component until reaching the dimension $1$-strata, which are irreducible components of loci parametrizing curves with at least $n - 4$ nodes. These $1$-dimensional boundary strata of $\cM_{0,n}$ are called F-curves.

The F-conjecture asserts that any extremal ray of the Mori cone $\NE(\cM_{0,n})$ is generated by a one dimensional boundary stratum \cite[Conjecture 0.2]{GKM02}. So far the conjecture for $\cM_{0,n}$ is known up to $n=7$ \cite[Theorem 1.2 (3)]{KMc}, \cite{Lars}, see \cite{FGL16} for a more recent general account, and the very intricate combinatorics of the moduli space $\cM_{0,n}$ for higher $n$ seem an obstacle pretty difficult to avoid if one wants to try his chance.

In this paper we prove an analogous statement for $\Sigma_m$. We tackle this problem taking advantage of the fact that $\Sigma_m$ is a Mori Dream Space, while we know that $\cM_{0,n}$ is not a Mori Dream Space for $n\geq 10$ \cite[Corollary 1.4]{CT15}, \cite[Theorem 1.1]{GK16}, \cite[Addendum 1.4]{HKL16}. Clearly, this makes the Mori cone of $\Sigma_m$ much more manageable than the Mori cone of $\overline{\mathcal{M}}_{0,n}$. 

Recall that the quotient $\Sigma_m$ gives an alternate compactification of $\mathcal{M}_{0,n}$, which is a little coarser than $\cM_{0,n}$ on the boundary, and in fact it is the target of a birational morphism $\cM_{0,n} \to \Sigma_m$, that we will recall in the body of the paper, see also \cite{Bo11}. 

Nevertheless, also $\Sigma_m$ has a stratification of its boundary locus, similar to that of $\cM_{0,n}$, and one can ask exactly the same question of the F-Conjecture.

By Proposition \ref{picpari} if $n = m+3$ is even then $\Pic(\Sigma_m)\cong \mathbb{Z}$ and its cones of curves and divisors are $1$-dimensional. This result could be obtained from Kempf's descent lemma \cite[Theorem 2.3]{DN89} but again we prefer to give an explicit birational proof.
 
On the other hand, if $n = m+3$ is odd then $\Pic(\Sigma_m)\cong \mathbb{Z}^{m+3}$ and the birational geometry of $\Sigma_m$ gets more interesting. In this case we manage to describe its Mori chamber decomposition and consequently the cones of nef and effective divisors of $\Sigma_m$ and their dual cones of effective and moving curves. For instance, we prove the following:

\begin{theoremn}\label{inthm}
The Mori cone $\NE(\Sigma_m)$ of the GIT quotient $\Sigma_m : = (\mathbb{P}^1)^{m+3}\quot PGL(2)$ of configurations of $n = m+3$ ordered points on $\P^1$ with respect to the symmetric polarization on $(\mathbb{P}^1)^n$ is generated by classes of $1$-dimensional boundary strata for any $m\geq 1$.
\end{theoremn}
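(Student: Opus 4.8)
The argument splits according to the parity of $n=m+3$.

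\emph{Even $n$.} By Proposition~\ref{picpari} we have $\Pic(\Sigma_m)\cong\mathbb{Z}$, so $\NE(\Sigma_m)$ is a single ray. Since $\Sigma_m$ is projective of Picard rank one, every curve has positive degree against the ample generator, so the class of \emph{any} $1$-dimensional boundary stratum is nonzero and generates $\NE(\Sigma_m)$; such strata exist once $\dim\Sigma_m\geq 2$ (keep degenerating configurations until the stratum is a curve), and for $m=1$ one has $\Sigma_1\cong\mathbb{P}^1$ and the statement is immediate.

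\emph{Odd $n$.} Now $\Pic(\Sigma_m)\cong\mathbb{Z}^{m+3}$ and $\Sigma_m$ is a Mori dream space, hence $\NE(\Sigma_m)$ is rational polyhedral, equal to $\Nef(\Sigma_m)^{\vee}$, and its extremal rays are dual to the facets of $\Nef(\Sigma_m)$. The plan is to read these facets, and then the opposite rays, off the explicit Mori chamber decomposition of $\Eff(\Sigma_m)$ established in the earlier sections. By \cite[Section 2]{HK00} the walls of $\Nef(\Sigma_m)$ are the walls of the ample cone of $(\mathbb{P}^1)^{m+3}$ adjacent to the GIT chamber of the symmetric polarization, and crossing such a wall $W_S$ is an elementary modification $g_S$ of $\Sigma_m$ whose flipped (or contracted) locus is the boundary stratum $\overline{Z}_S\subseteq\Sigma_m$ of configurations in which the points indexed by $S$ come together. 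I would then: (i) enumerate the generators $H_1,\dots,H_N$ of the facets of $\Nef(\Sigma_m)$ together with the corresponding modifications $g_i$ and admissible subsets $S_i$; (ii) for each $i$ produce a $1$-dimensional boundary stratum $\Gamma_i$ inside the locus $\overline{Z}_{S_i}$ modified by $g_i$, namely a one-parameter family of configurations with the maximal admissible number of coincident points, obtained by degenerating a general member of $\overline{Z}_{S_i}$ further; (iii) using the $S_{m+3}$-symmetry of $\Sigma_m$ and the intersection numbers between boundary divisors and boundary curves furnished by the technical machinery of the earlier sections, check that $H_j\cdot\Gamma_i=0$ for $j\neq i$ while $H_i\cdot\Gamma_i>0$, so that $[\Gamma_i]$ generates the extremal ray dual to the facet $\langle H_j:j\neq i\rangle$. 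As the $H_i$ exhaust the facets of $\Nef(\Sigma_m)$ and each $\Gamma_i$ is an effective curve, the classes $[\Gamma_i]$ exhaust the extremal rays and generate $\NE(\Sigma_m)$. Equivalently, since the birational contraction $f\colon\cM_{0,n}\to\Sigma_m$ is surjective one has $f_{*}\NE(\cM_{0,n})=\NE(\Sigma_m)$ and $f_{*}$ sends each F-curve of $\cM_{0,n}$ to $0$ or to a $1$-dimensional boundary stratum of $\Sigma_m$, so it is enough to verify that a divisor on $\Sigma_m$ nonnegative on the images of all F-curves is nef --- a restricted form of the F-conjecture that again follows from the explicit $\Nef(\Sigma_m)$.

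\emph{Main obstacle.} The substantive input is the explicit Mori chamber decomposition of $\Sigma_m$, built in the earlier sections out of the variation of GIT on $(\mathbb{P}^1)^{m+3}$; granting that, the difficulty specific to the present statement is completeness. One must run the combinatorial analysis over all $S_{m+3}$-orbits of boundary strata and verify, via explicit intersection computations, that the strata exhibited are genuinely \emph{extremal} and that every facet of $\Nef(\Sigma_m)$ has a boundary-stratum curve opposite to it; exhibiting boundary-stratum classes \emph{inside} $\NE(\Sigma_m)$ is the easy half and does not by itself show they generate the cone.
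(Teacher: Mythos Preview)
Your treatment of the even $n$ case is fine and matches the paper. For odd $n$, however, what you have is a plan rather than a proof: steps (i)--(iii) are announced but not executed, and your ``main obstacle'' paragraph correctly identifies that the actual content lies precisely in that unexecuted verification. The alternative via $f_{*}\NE(\cM_{0,n})=\NE(\Sigma_m)$ does not shortcut anything: the F-conjecture for $\cM_{0,n}$ is unavailable for large $n$, so you are forced back to checking that F-curve images span the dual of the explicit $\Nef(\Sigma_m)$, which is the same intersection-number computation you already postponed.

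The paper's route differs at exactly the point where your sketch stalls. Both approaches begin from the explicit Mori chamber decomposition of the blow-up $X^{2g}_{2g+2}$ (Theorem~\ref{thmcd}), identify $\Sigma_{2g}$ with the Fano chamber, and read off that $\Nef(\Sigma_{2g})$ has $\binom{2g+3}{g+1}$ facets (Corollary~\ref{numexrays}). But instead of computing intersection numbers of boundary curves against facet generators, the paper proves extremality geometrically via the forgetful morphisms $\pi_i:\Sigma_{2g}\to\Sigma_{2g-1}$. Each $\pi_i$ contracts a pair of complementary $g$-planes $H_I\cup H_{I^c}$ to a singular point, while the general fibre degenerates numerically as $L_I+L_{I^c}$; this shows $L_I,L_{I^c}$ generate the relative cone $\NE(\pi_i)$, which is an extremal subcone of $\NE(\Sigma_{2g})$ by \cite[Proposition~1.14]{De01} (Proposition~\ref{exray}). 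A straight count then shows that as $I$ varies one obtains exactly $\binom{2g+3}{g+1}$ distinct rays, so they must be all of them (Theorem~\ref{ful}). This sidesteps any direct intersection-number bookkeeping and makes the extremality transparent; your approach would work in principle, but the paper's argument is both cleaner and actually carried out.
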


In Theorem \ref{ful} we will also describe precisely what are these $1$-dimensional strata. The main ingredients of the proof of Theorem \ref{inthm} is a construction of our GIT quotients as images $\Sigma_m\subset\mathbb{P}^N$ of rational maps induced by certain linear systems on the projective space $\mathbb{P}^m$, due to C. Kumar \cite{Ku00, Ku03}, a careful analysis of the Mori chamber decomposition of the movable cone of certain blow-ups of the projective space and some quite refined projective geometry of the GIT quotients. 

More precisely C. Kumar realized $\Sigma_m$ as the closure of the image of the rational map induced by the linear system $\mathcal{L}_{2g-1}$ of degree $g$ hyersurfaces of $\mathbb{P}^{2g-1}$ having multiplicity $g-1$ at $2g+1$ general points if $m = 2g-1$ is odd, and as the closure of the image of the rational map induced by the linear system $\mathcal{L}_{2g}$ of degree $2g+1$ hypersurfaces of $\mathbb{P}^{2g}$ having multiplicity $2g-1$ at $2g+2$ general points if $m = 2g$ is even. In particular, $N = h^{0}(\mathbb{P}^{2g-1},\mathcal{L}_{2g-1})$ if $m = 2g-1$ is odd, and $N = h^{0}(\mathbb{P}^{2g},\mathcal{L}_{2g})$ if $m = 2g$ is even.

Thanks to recent results on the dimension of linear systems on the projective space due to M. C. Brambilla, O. Dumitrescu and E. Postinghel \cite{BDP16}, we obtain some explicit formulas for the Hilbert polynomial of $\Sigma_m\subset\mathbb{P}^N$. These results are resumed in Corollaries \ref{hilbseg}, \ref{hilbodd}. We would like to mention that an inductive formula for the degree of these GIT quotients had already been given in \cite{HMSV09}, while a closed formula for the Hilbert function of GIT quotients of evenly weighted points on the line had been given in \cite{HH14}. The arguments in \cite{HH14} are based on a representation theoretical study of the coordinate ring of $\Sigma_m$ while our approach, which works in the oddly weighted case as well, privileges the geometrical viewpoint. Finally, thanks to a detailed description of the projective geometry of $\Sigma_m$ we manage to compute its automorphism groups. 

The main results on the geometry of $\Sigma_m\subset\mathbb{P}^N$ in Sections \ref{oddismi}, \ref{evenio}, \ref{secaut}, and Corollaries \ref{hilbseg}, \ref{hilbodd} can be summarized as follows.

\begin{theoremn}
Let us consider the GIT quotient $\Sigma_m\subset\mathbb{P}^N$. If $m = 2g-1$ is odd we have 

$$\Pic(\Sigma_{2g-1})\cong\mathbb{Z},\ K_{\Sigma_{2g-1}}\cong\mathcal{O}_{\Sigma_{2g-1}}(-2)$$

 and the  
the Hilbert polynomial of $\Sigma_{2g-1}\subseteq \mathbb{P}^N$ is given by 

$$h_{\Sigma_{2g-1}}(t) = \binom{gt+2g-1}{2g-1}+\sum_{r=0}^{g-2}(-1)^{r+1}\binom{2g+1}{r+1}\binom{t(g-r-1)+2g-1-r-1}{2g-1}$$

In particular 
$$\deg(\Sigma_{2g-1})= g^{2g-1}+\sum_{r=0}^{g-2}(-1)^{r+1}\binom{2g+1}{r+1}(g-r-1)^{2g-1}$$

If $m = 2g$ is even we have 

$$\Pic(\Sigma_{2g})\cong\mathbb{Z}^{2g+3},\ K_{\Sigma_{2g}}\cong\mathcal{O}_{\Sigma_{2g}}(-1)$$

and the  
the Hilbert polynomial of $\Sigma_{2g}\subseteq \mathbb{P}^N$ is given by 

$$h_{\Sigma_{2g}}(t) = \binom{(2g+1)t+2g}{2g}+\sum_{r=0}^{g-1}(-1)^{r+1}\binom{2g+2}{r+1}\binom{t(2g-2r-1)+2g-r-1}{2g}$$

In particular 
$$\deg(\Sigma_{2g})= (2g+1)^{2g}+\sum_{r=0}^{g-1}(-1)^{r+1}\binom{2g+2}{r+1}(2g-2r-1)^{2g}$$

Furthermore, the automorphism group of $\Sigma_m$ is isomorphic to the symmetric group on $n = m+3$ elements $S_n$ for any $m\geq 2$. 
\end{theoremn}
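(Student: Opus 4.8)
The statement bundles four essentially independent facts, which I would treat in turn, all resting on C.\ Kumar's birational models recalled above and on the geometry of $\Sigma_m$ developed in the previous sections. The Picard groups I would simply quote: $\Pic(\Sigma_{2g-1})\cong\mathbb{Z}$ is Proposition~\ref{picpari}, while $\Pic(\Sigma_{2g})\cong\mathbb{Z}^{2g+3}$ comes out of the Mori chamber analysis; so the real content is $K_{\Sigma_m}$, the Hilbert polynomial, and $\Aut(\Sigma_m)$.

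For the canonical class, let $\pi\colon X_m\to\mathbb{P}^m$ be the blow-up at the $m+2$ general points occurring in $\mathcal{L}_m$, with exceptional divisors $E_1,\dots,E_{m+2}$, hyperplane pull-back $H$, and let $D$ be the strict transform of $\mathcal{L}_m$, so $D=gH-(g-1)\sum_iE_i$ if $m=2g-1$ and $D=(2g+1)H-(2g-1)\sum_iE_i$ if $m=2g$. Since we blow up points, $K_{X_m}=-(m+1)H+(m-1)\sum_iE_i$, and a one-line check gives $K_{X_{2g-1}}=-2D$ and $K_{X_{2g}}=-D$. The rational map $X_m\dashrightarrow\Sigma_m\subseteq\mathbb{P}^N$ attached to $|D|$ is birational onto $\Sigma_m$, and its explicit description from Sections~\ref{oddismi}--\ref{evenio} identifies $\mathcal{O}_{\Sigma_m}(1)$ with $D$ away from the contracted loci, so these identities descend to $\Sigma_m$. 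If $m=2g-1$ then $\Pic(\Sigma_m)=\mathbb{Z}\cdot\mathcal{O}_{\Sigma_m}(1)$, hence $K_{\Sigma_m}$ is an integer multiple of $\mathcal{O}_{\Sigma_m}(1)$, and that multiple is pinned to $-2$ by intersecting with the image of a general line of $\mathbb{P}^{2g-1}$, on which $\mathcal{O}_{\Sigma_m}(1)$ has degree $g$ and $K_{\Sigma_m}$ degree $-2g$. If $m=2g$ then $\Sigma_{2g}$ is smooth, both $-K_{\Sigma_{2g}}$ and $\mathcal{O}_{\Sigma_{2g}}(1)$ are $S_n$-invariant, and the $S_n$-invariant part of $\Pic(\Sigma_{2g})\otimes\mathbb{Q}$ is one-dimensional (the $\binom{n}{2}$ boundary divisors span it and carry the permutation representation of $S_n$ on $2$-subsets); so $K_{\Sigma_{2g}}=\lambda\,\mathcal{O}_{\Sigma_{2g}}(1)$ and a single intersection number forces $\lambda=-1$.

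For the Hilbert polynomial, normality of $\Sigma_m$ gives $h_{\Sigma_m}(t)=\chi(\Sigma_m,\mathcal{O}_{\Sigma_m}(t))=h^{0}(X_m,tD)$ for $t\gg0$, i.e.\ the dimension of the linear system of degree-$gt$ (resp.\ degree-$(2g+1)t$) hypersurfaces of $\mathbb{P}^{2g-1}$ (resp.\ $\mathbb{P}^{2g}$) with multiplicity $(g-1)t$ (resp.\ $(2g-1)t$) at the $m+2$ general points. I would then invoke \cite{BDP16} to the effect that these systems are non-special once the contributions of the linear subspaces spanned by the base points are removed; carrying out the resulting inclusion-exclusion count yields exactly the displayed binomial sums, the range of summation being governed by the vanishing of the virtual contributions of the linear cycles that have become negative. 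The degree follows at once as $m!$ times the leading coefficient of $h_{\Sigma_m}(t)$, read off as the stated alternating sum of $m$-th powers.

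Finally, the action of $S_n$ on the marked points gives $S_n\hookrightarrow\Aut(\Sigma_m)$, and the task is surjectivity for $m\geq2$, i.e.\ $n\geq5$. If $m=2g-1$, every automorphism fixes the ample generator $\mathcal{O}_{\Sigma_m}(1)$ of $\Pic(\Sigma_m)$, hence is induced by a projective transformation of $\mathbb{P}^N$ preserving $\Sigma_m$; such a transformation preserves the finite set $\Gamma=\Sing(\Sigma_m)$, whose $\tfrac12\binom{n}{n/2}$ points are the images of the split configurations $\{x_i=a\ (i\in S),\ x_j=b\ (j\notin S)\}$ with $|S|=n/2$. Proving that $\Gamma$ is nondegenerate enough --- that it contains $N+2$ points in general position in $\mathbb{P}^N$ --- forces $\Aut(\Sigma_m)\hookrightarrow\mathrm{Sym}(\Gamma)$, and the configuration made of $\Gamma$ together with the lines of $\Sigma_m$ joining its points reconstructs the $n$-element set, so its symmetry group equals $S_n$ for $n\geq5$ (the extra symmetries occurring only at $n=4$); with $S_n\subseteq\Aut(\Sigma_m)$ this gives equality. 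If $m=2g$, $\Aut(\Sigma_{2g})$ acts on $\Pic(\Sigma_{2g})$ preserving the effective cone and its Mori chamber decomposition, hence permutes the $\binom{n}{2}$ boundary divisors $D_{ij}$, which span $\Pic(\Sigma_{2g})\otimes\mathbb{Q}$ and whose mutual configuration is the triangular graph $J(n,2)$; this forces the image of $\Aut(\Sigma_{2g})\to S_{\binom{n}{2}}$ into $S_n$ for $n\geq5$, and the homomorphism is injective because $\Aut^{0}(\Sigma_{2g})=1$: a connected automorphism group acts trivially on $\Pic(\Sigma_{2g})$, hence preserves $\mathcal{M}_{0,n}=\Sigma_{2g}\setminus\bigcup_{i<j}D_{ij}$, while $H^{0}(\Sigma_{2g},T_{\Sigma_{2g}})=0$ as one checks on $X_{2g}$. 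The routine part is everything reducing to the Kumar models and \cite{BDP16}; the genuinely hard part is the automorphism statement, and within it the precise control of $\Sing(\Sigma_m)$ and of the projective geometry of $\Sigma_m\subseteq\mathbb{P}^N$ in the odd case, together with the vanishing $H^{0}(\Sigma_{2g},T_{\Sigma_{2g}})=0$ in the even case.
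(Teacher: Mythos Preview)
Your treatment of the Picard groups, the canonical sheaves, and the Hilbert polynomials is essentially the paper's: everything goes through Kumar's linear systems $\mathcal{L}_m$ on $\mathbb{P}^m$, the identity $K_{X_m}=-2D$ (resp.\ $-D$) on the blow-up, and the Brambilla--Dumitrescu--Postinghel formula for $h^0(\mathbb{P}^m,t\mathcal{L}_m)$. One remark: for $K_{\Sigma_{2g}}$ you do not need the $S_n$-invariance detour. Since $\psi\colon X^{2g}_1\dashrightarrow\Sigma_{2g}$ is an isomorphism in codimension one (it is a sequence of flips), $K_{X^{2g}_1}\sim\psi^*K_{\Sigma_{2g}}$ directly, which is exactly how the paper argues in Lemmas~\ref{can} and~\ref{canodd} via Lemma~\ref{teclemma}.

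The substantive divergence, and the place where your proposal has genuine gaps, is the automorphism statement. The paper does \emph{not} argue combinatorially via incidence graphs on the singular locus or on the boundary divisors; instead it builds, in both parities, an explicit configuration of $g$-planes inside $\Sigma_m\subset\mathbb{P}^N$ (the families $\mathbf{A}_p,\mathbf{B}_p$ in \S\ref{gSeg} for $m$ odd, and $\mathbf{C},\mathbf{D}$ in \S\ref{linsubsodd} for $m$ even), shows that any automorphism is linear (Corollary~\ref{linaut}, Lemma~\ref{linautodd}) and therefore must permute these $g$-planes and their distinguished intersection points, and then \emph{counts}: there are exactly $(m+3)!$ ways to permute the configuration compatibly, and an automorphism fixing the configuration pointwise fixes $N+2$ points of $\mathbb{P}^N$ in general position (\S\ref{basis}, Remark~\ref{genposodd}), hence is the identity.

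Your odd-case sketch (``the lines of $\Sigma_m$ joining the singular points reconstruct the $n$-element set'') is pointing at the right phenomenon but is not a proof: you have not said which pairs of singular points are joined by a line of $\Sigma_{2g-1}$, nor why an automorphism preserves that relation, nor why the resulting incidence structure has automorphism group exactly $S_n$. The paper's analysis of the two families of $g$-planes through a node and of the tangent cone at a node (Lemma~\ref{tangcone}) is precisely what supplies this.

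Your even-case argument has a more concrete error. You claim injectivity of $\Aut(\Sigma_{2g})\to S_{\binom{n}{2}}$ ``because $\Aut^0(\Sigma_{2g})=1$''. But the kernel consists of automorphisms acting trivially on the classes $D_{ij}$, and acting trivially on $\Pic$ does \emph{not} force an automorphism to lie in the identity component; the vanishing $H^0(T_{\Sigma_{2g}})=0$ only kills $\Aut^0$, not the whole kernel. To close this you still need a rigidity statement of the type the paper proves: an automorphism fixing each $D_{ij}$ setwise must fix enough points of $\mathbb{P}^N$ in general position to be the identity. That is exactly what the configuration $\mathbf{C}\cup\mathbf{D}$ and Remark~\ref{genposodd} provide. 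Likewise, to get the image inside $S_n$ you should say precisely which $\Aut$-invariant relation on the extremal rays $D_{ij}$ realizes the Johnson graph $J(n,2)$ (e.g.\ a suitable intersection-theoretic pairing), rather than just asserting it.
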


Though a VGIT approach might be adopted in dealing with this kind of problems throughout the paper we will privilege explicit arguments of birational and projective nature.

\subsection*{Plan of the paper}
The paper is organized as follows. In Section \ref{pre} we recall some well-known facts and prove some preliminary results on GIT quotient, moduli spaces of weighted pointed rational curves and we clarify the relations between them. In Section \ref{secbir} we describe the Mori cone of $\Sigma_m$ with $m = 2g$ even. In Section \ref{degrees} we work out explicit formulas for the Hilbert polynomial and the degree of $\Sigma_m\subset\mathbb{P}^N$. Finally, in Section \ref{secaut} we compute the automorphism groups of $\Sigma_m$.

\subsection*{Acknowledgments}
The authors are members of the Gruppo Nazionale per le Strutture Algebriche, Geometriche e le loro Applicazioni of the Istituto Nazionale di Alta Matematica "F. Severi" (GNSAGA-INDAM). The first named author is member of the GDR GAGC of the CNRS. We thank the referees for the helpful comments that helped us to improve the paper, and Samuel Payne and Rohini Ramadas for pointing out a mistake in Theorem \ref{moving}.

\section{GIT quotients of $(\mathbb{P}^1)^n$}\label{pre}
The main characters of the paper are the GIT quotients $(\P^1)^n\quot  PGL(2)$, that we review now very quickly. For details there is plenty of very good references on this subject \cite{Do03,Do12,HMSV09,Bo11,DO88}. The application of GIT quotients to extract information about the F-conjecture or close analogs is a philosophy that dates back to the thesis of M. Simpson \cite{Si08} and papers \cite{GS11,BG15}, in the same spirit.

Let us consider the diagonal $PGL(2)$-action on $(\mathbb{P}^1)^n$. An ample line bundle $L$ endowed
with a linearization for the $PGL(2)$-action is called a polarization. Such a polarization on $(\mathbb{P}^1)^n$
is completely determined by an $n$-tuple $b = (b_1,\dots,b_n)$ of positive integers: 
$$L = \boxtimes_{i=1}^n\mathcal{O}_{\mathbb{P}^1}(b_i)$$
Now let us set $|b| = b_1+\dots+b_n$. A point $x\in (\mathbb{P}^1)^n$ is said to be $b$-semistable if for some $k > 0$, there exists a $PGL(2)$-invariant section $s \in H^0((\mathbb{P}^1)^n,L^{\otimes k})^{PGL(2)}$ such that $X_s:=\{ y\in (\mathbb{P}^1)^n: s(y)\neq 0\}$ is affine and contains $x$. A semistable point $x \in (\mathbb{P}^1)^n$ is stable if its stabilizer under the $PGL(2)$ action is finite and all the orbits of $PGL(2)$ in $X_s$ are closed. A categorical quotient of the open set $((\mathbb{P}^1)^n)^{ss}(b)$ of semistable points exists, and this is what we normally denote by $X(b)\quot PGL(2)$. We will omit to specify the polarization when it is $(1,\dots,1)$. Recall that, if $|b|$ is odd, then $H^0((\mathbb{P}^1)^n,L^{\otimes k})^{PGL(2)}=0$ for odd $k$, and 
\stepcounter{thm}
\begin{equation}\label{perdue}
(\mathbb{P}^1)^n(b)\quot PGL(2) = (\mathbb{P}^1)^n(2b)\quot PGL(2)
\end{equation}

\stepcounter{thm}
\subsection{Linear systems on $\mathbb{P}^n$}
Throughout the paper we will denote by $\mathcal{L}_{n,d}(m_1,\dots,m_s)$  the linear system of hypersurfaces of degree $d$ in $\mathbb{P}^n$ passing through $s$ general points $p_1,\dots,p_s\in\mathbb{P}^n$ with multiplicities respectively $m_1,\dots,m_s$. It was pointed out by Kumar \cite[Section 3.3]{Ku03} that the GIT quotients $(\mathbb{P}^1)^n(b)\quot PGL(2)$ can be obtained as the images of certain rational polynomial maps defined on $\P^{n-3}$.

\begin{thm}\cite[Theorem 3.4]{Ku03}\label{Kum}
Let us assume that $b_i < \sum_{j\neq i}b_j$ for any $i=1,\dots,n$, and let $p_1,\dots,p_{n-1}\in\mathbb{P}^{n-3}$ be general points. 
\begin{itemize}
\item[-] If $|b| = 2\overline{b}$ is even let 
$$\mathcal{L}=\mathcal{L}_{n-3,\overline{b}-b_n}(\overline{b}-b_n-b_1,\dots,\overline{b}-b_n-b_i,\dots,\overline{b}-b_n-b_{n-1})$$ 
be the linear system of degree $\overline{b}-b_n$ hypersurfaces in $\mathbb{P}^{n-3}$ with multiplicity $\overline{b}-b_i-b_n$ at $p_i$. 
\item[-] If $|b|$ is odd let 
$$\mathcal{L}=\mathcal{L}_{n-3,b-2b_n}(b-2b_n-2b_1,\dots,b-2b_i-2b_n,\dots, b-2b_n-2b_n)$$ 
be the linear system of degree $b-2b_n$ hypersurfaces in $\mathbb{P}^{n-3}$ with multiplicity $b-2b_i-2b_n$ at $p_i$. 
\end{itemize}
Finally, let $\phi_{\mathcal{L}}:\mathbb{P}^{n-3}\dasharrow \mathbb{P}(H^0(\mathbb{P}^{n-3},\mathcal{L})^{*})$ be the rational map induced by $\mathcal{L}$. Then $\phi_{\mathcal{L}}$ maps birationally $\mathbb{P}^{n-3}$ onto $(\mathbb{P}^1)^n(b)\quot PGL(2)$. In other words $(\mathbb{P}^1)^n(b)\quot PGL(2)$ may be realized as the closure of the image $\phi_{\mathcal{L}}$ in $\mathbb{P}(H^0(\mathbb{P}^{n-3},\mathcal{L})^{*})$.
\end{thm}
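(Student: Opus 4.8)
The plan is to realise the quotient birationally through the classical theory of \emph{association} of ordered point sets (equivalently, through the Gelfand--MacPherson correspondence), and then to read the linear system $\mathcal{L}$ off the ring of invariants by an explicit dehomogenisation. First, since $b_i<\sum_{j\ne i}b_j$ for every $i$, a configuration of $n$ distinct points of $\mathbb{P}^1$ is $b$-stable, so the stable locus is a dense open subset of $(\mathbb{P}^1)^n(b)\quot PGL(2)$ on which the quotient is geometric. The association (Gale transform) isomorphism for point sets \cite{DO88} gives
$$(\mathbb{P}^1)^n(b)\quot PGL(2)\;\cong\;(\mathbb{P}^{n-3})^n(b')\quot PGL(n-2),$$
where, after the normalisation $|b|=2\overline{b}$ (replacing $b$ by $2b$ when $|b|$ is odd, as allowed by \eqref{perdue}), the dual polarization works out to $b'_i=\overline{b}-b_i$, a \emph{positive} weight precisely because of the standing hypothesis. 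Now $n-1$ points in general position in $\mathbb{P}^{n-3}$ form a projective frame and are moved to the standard frame by a unique element of $PGL(n-2)$; fixing the first $n-1$ points of a configuration to that frame identifies the locus where they are general with the affine space of positions of the remaining point $p_n$, and hence the whole quotient with a compactification of $\mathbb{A}^{n-3}$ birational to $\mathbb{P}^{n-3}$, the $n-1$ frame points playing the role of the distinguished points $p_1,\dots,p_{n-1}$ (which may be taken general, since any frame can be so moved).

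Next I would identify $\mathcal{L}$. The homogeneous coordinate ring of the quotient in its natural embedding is the ring of invariants, and by the first fundamental theorem of invariant theory for $SL(n-2)$ its degree-$k$ piece $R_k$ is spanned by bracket monomials $\prod_S\Delta_S^{a_S}$, where $\Delta_S$ is the maximal minor on an $(n-2)$-subset $S\subset\{1,\dots,n\}$, $a_S\ge 0$, and $\sum_{S\ni j}a_S=k\,b'_j$ for every $j$. Under the dehomogenisation above a bracket with $S\subset\{1,\dots,n-1\}$ becomes a nonzero constant, while $\Delta_{S'\cup\{n\}}$ with $|S'|=n-3$ becomes a nonzero linear form in the coordinates of $p_n$ that vanishes at $p_i$ exactly when $i\in S'$. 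Thus every invariant monomial becomes, up to a nonzero scalar, a monomial $F$ in the coordinates of $p_n$ of degree $\sum_{S\ni n}a_S=k\,b'_n$, with $\mult_{p_i}F=\sum_{S\supseteq\{i,n\}}a_S=k\,b'_i-\sum_{S\ni i,\,n\notin S}a_S$. From $(n-2)\sum_S a_S=\sum_j k\,b'_j=k(n-2)\overline{b}$ one gets $\sum_S a_S=k\overline{b}$, hence $\sum_{S\not\ni n}a_S=k\,b_n$; since $\sum_{S\ni i,\,n\notin S}a_S\le\sum_{S\not\ni n}a_S$, this yields $\mult_{p_i}F\ge k(\overline{b}-b_i-b_n)$. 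For $k=1$ this places $R_1$ inside $H^0(\mathbb{P}^{n-3},\mathcal{L})$, where $\mathcal{L}$ is exactly the system of degree $\overline{b}-b_n$ hypersurfaces with multiplicity $\overline{b}-b_i-b_n$ at $p_i$ --- the system in the statement (the case $|b|$ odd being this one applied to $2b$) --- and the natural map of the quotient to projective space, attached to $R_1$, becomes the restriction of $\phi_{\mathcal{L}}$.

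Two points remain. That $\phi_{\mathcal{L}}$ is birational onto its image is geometric invariant theory: over the stable locus the quotient is geometric and its closed orbits are separated by the invariants, so $\phi_{\mathcal{L}}$ restricted to the dense open set of general stable configurations coincides with the quotient map and is injective there; since the source is normal and the characteristic is zero, $\phi_{\mathcal{L}}$ is birational onto its image, which is then the GIT quotient. The delicate point is that the subsystem $R_1$ found above is \emph{all} of $|\mathcal{L}|$, i.e.\ that $h^0(\mathbb{P}^{n-3},\mathcal{L})=\dim R_1$; only then is the complete map $\phi_{\mathcal{L}}$, with target $\mathbb{P}(H^0(\mathbb{P}^{n-3},\mathcal{L})^{*})$, the one realising the quotient, rather than a dominant map to a partial resolution of it. I expect this non-speciality of the systems $\mathcal{L}$ to be the main obstacle: it is a dimension count for linear systems with general multiple points of exactly the kind treated in Section~\ref{degrees} using \cite{BDP16}, specialising for the symmetric polarization to the non-speciality of the systems $\mathcal{L}_{2g-1}$, $\mathcal{L}_{2g}$ used throughout the paper.
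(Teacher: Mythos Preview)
This theorem is not proved in the paper: it is quoted from Kumar \cite{Ku03} and used as a black box throughout. Your sketch does follow Kumar's own strategy---association to pass to configurations in $\mathbb{P}^{n-3}$, normalisation of a projective frame, then reading the degree-one invariants off the bracket algebra---and your multiplicity bound correctly yields the containment $R_1\subseteq H^0(\mathbb{P}^{n-3},\mathcal{L})$. One small slip: once $R_1\subsetneq H^0(\mathcal L)$ is a possibility, the map attached to $R_1$ is not ``the restriction of $\phi_{\mathcal L}$'' but a linear \emph{projection} of it; this is exactly why the equality matters.

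You are right that $R_1=H^0(\mathbb{P}^{n-3},\mathcal{L})$ is the crux, and it is genuinely needed for the statement as the paper uses it: were the containment strict, the image of $\phi_{\mathcal{L}}$ would be a nontrivial modification of the GIT quotient, and the later computations of canonical class, degree and Picard group would all refer to the wrong variety. Your proposed route via \cite{BDP16} computes $h^0(\mathcal{L})$ (and Section~\ref{degrees} does not use Theorem~\ref{Kum} for that count, so there is no circularity), but you still owe an independent computation of $\dim R_1$ from $SL_2$-invariant theory before the two can be matched; Kumar's original argument instead exhibits an explicit spanning set of $H^0(\mathcal{L})$ by dehomogenised bracket monomials. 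There is one further ingredient you pass over: even granting $R_1=H^0(\mathcal{L})$, to conclude that the closure of the image of $\phi_{\mathcal{L}}$ in $\mathbb{P}(R_1^{*})$ \emph{is} $\Proj(R)$ rather than a possibly non-normal image of it, you need the invariant ring $R$ to be generated in degree one. That is a theorem of Kempf for these quotients (and is part of the reason one replaces $b$ by $2b$ when $|b|$ is odd, cf.\ \eqref{perdue}); it should be invoked explicitly.
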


\begin{Example}
For instance, if $n=6$ and $b_1=\dots=b_6=1$ then $\overline{b} = 3$, and $\mathcal{L}=\mathcal{L}_{3,2}(1,\dots,1)$. In this case the rational map $\phi_{\mathcal{L}}$ is given by the quadrics in $\mathbb{P}^3$ passing through five general points, and $(\mathbb{P}^1)^n(b)\quot PGL(2)\subset\mathbb{P}^4$ is the Segre cubic $3$-fold. This is a very well-known classical object, see \cite{Do15,AB15,BB12} for some historic perspective and applications.

If $n=5$ and $b_1=\dots=b_5=1$ then $\mathcal{L}=\mathcal{L}_{2,3}(1,\dots,1)$ that is the linear system of plane cubics through four general points. In this case the quotient is a del Pezzo surface of degree five.
\end{Example}

\stepcounter{thm}
\subsection{Moduli of weighted pointed curves}
In \cite{Ha}, B. Hassett introduced moduli spaces of weighted pointed curves. 
Given $g\geq 0$ and rational weight data  $A[n] = (a_{1},\dots,a_{n})$, $0< a_{i}\leq 1$, satisfying $2g-2 + \sum_{i = 1}^{n}a_{i} > 0$,
the moduli space $\cM_{g,A[n]}$ parametrizes  genus $g$ nodal $n$-pointed curves $\{C,(x_1,\dots,x_n)\}$ 
subject to the following stability conditions:
\begin{itemize}
\item[-] each $x_i$ is a smooth point of $C$, and the points $x_{i_1}, \dots,  x_{i_k}$ are allowed to coincide only if $\sum_{j= 1}^{k}a_{i_j}\leq 1$,
\item[-] the twisted dualizing sheaf $\omega_C(a_{1}x_1 +\dots + a_{n}x_n)$ is ample. 
\end{itemize}
In particular,  $\cM_{g,A[n]}$ is a compactifications of the moduli space $\mathcal{M}_{g,n}$ of genus $g$ smooth $n$-pointed curves. 

\begin{say}\label{reduction}
For fixed $g,n$, consider two collections of weight data $A[n],B[n]$ such that $a_i\geq b_i$ for any $i = 1,\dots,n$. Then there exists a birational \textit{reduction morphism}
$$\rho_{B[n],A[n]}:\cM_{g,A[n]}\rightarrow\cM_{g,B[n]}$$
associating to a curve $[C,s_1,\dots,s_n]\in\cM_{g,A[n]}$ the curve $\rho_{B[n],A[n]}([C,s_1,\dots,s_n])$ obtained by collapsing components of $C$ along which $\omega_C(b_1s_1+\dots+b_ns_n)$ fails to be ample, where $\omega_C$ denotes the dualizing sheaf of $C$.
\end{say}

\begin{say}\label{forgetful}
Furthermore, for any $g$, consider a collection of weight data $A[n]=(a_1,\dots,a_n)$ and a subset $A[r]:=(a_{i_{1}},\dots,a_{i_{r}})\subset A[n]$ such that $2g-2+a_{i_{1}}+\dots+a_{i_{r}}>0$. Then there exists a \textit{forgetful morphism} 
$$\pi_{A[n],A[r]}:\cM_{g,A[n]}\rightarrow\cM_{g,A[r]}$$
associating to a curve $[C,s_1,\dots,s_n]\in\cM_{g,A[n]}$ the curve $\pi_{A[n],A[r]}([C,s_1,\dots,s_n])$ obtained by collapsing components of $C$ along which $\omega_C(a_{i_{1}}s_{i_{1}}+\dots+a_{i_{r}}s_{i_{r}})$ fails to be ample.
\end{say}
One of the most elegant aspects of the theory of rational pointed curves is the relation with rational normal curves and their projective geometry. This has been outlined by Kapranov in \cite{Ka}. Here below we briefly recall this, and his construction of $\cM_{0,n}$ as an iterated blow-up of $\P^{n-3}$.

\stepcounter{thm}
\subsubsection*{Kapranov's blow-up construction}
We follow \cite{Ka}. Let $(C,x_{1},\dots,x_{n})$ be a genus zero $n$-pointed stable curve. The dualizing sheaf $\omega_{C}$ of $C$ is invertible \cite{Kn}. By \cite[Corollaries 1.10 and 1.11]{Kn} the sheaf $\omega_{C}(x_{1}+\dots+x_{n})$ is very ample and has $n-1$ independent sections. Then it defines an embedding $\phi:C\rightarrow\mathbb{P}^{n-2}$. In particular, if $C\cong\mathbb{P}^{1}$ then $\deg(\omega_{C}(x_{1}+\dots+x_{n})) = n-2$, $\omega_{C}(x_{1}+\dots+x_{n})\cong\phi^{*}\mathcal{O}_{\mathbb{P}^{n-2}}(1)\cong\mathcal{O}_{\mathbb{P}^{1}}(n-2)$, and $\phi(C)$ is a degree $n-2$ rational normal curve in $\mathbb{P}^{n-2}$. By \cite[Lemma 1.4]{Ka} if $(C,x_{1},\dots,x_{n})$ is stable the points $p_{i} = \phi(x_{i})$ are in linear general position in $\mathbb{P}^{n-2}$.

This fact combined with a careful analysis of limits in $\cM_{0,n}$ of $1$-parameter families contained in $\mathcal{M}_{0,n}$ are the key for the proof of the following theorem \cite[Theorem 0.1]{Ka}.

\begin{thm}\label{kaphilb}
Let $p_{1},\dots,p_{n}\in\mathbb{P}^{n-2}$ be points in linear general position, and let $V_{0}(p_{1},\dots,p_{n})$ be the scheme parametrizing rational normal curves through $p_{1},\dots,p_{n}$. Consider $V_{0}(p_{1},\dots,p_{n})$ as a subscheme of the Hilbert scheme $\mathcal{H}$ parametrizing subschemes of $\mathbb{P}^{n-2}$. Then
\begin{itemize}
\item[-] $V_{0}(p_{1},\dots,p_{n})\cong \mathcal{M}_{0,n}$.
\item[-] Let $V(p_{1},\dots,p_{n})$ be the closure of $V_{0}(p_{1},\dots,p_{n})$ in $\mathcal{H}$. Then $V(p_{1},\dots,p_{n})\cong\cM_{0,n}$. 
\end{itemize}
\end{thm}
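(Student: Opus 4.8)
The plan is to prove the two isomorphisms in turn. The first, $V_{0}(p_{1},\dots,p_{n})\cong\mathcal{M}_{0,n}$, I would obtain by a direct construction that is then carried out in families; the second, $V(p_{1},\dots,p_{n})\cong\cM_{0,n}$, by producing a morphism $\cM_{0,n}\to\mathcal{H}$, checking it is a continuous bijection onto $V$, and then analysing the scheme structure of $V$ itself via one-parameter degenerations and a normal-sheaf computation.

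For the first isomorphism, to $(C,x_{1},\dots,x_{n})\in\mathcal{M}_{0,n}$ I associate the rational normal curve obtained by embedding $C\cong\mathbb{P}^{1}$ by $\omega_{C}(x_{1}+\dots+x_{n})\cong\mathcal{O}_{\mathbb{P}^{1}}(n-2)$ and then applying the unique projectivity carrying the $n$ image points — which form a projective frame of $\mathbb{P}^{n-2}$ by \cite[Lemma 1.4]{Ka} — to $p_{1},\dots,p_{n}$; here one uses that $PGL(n-1)$ acts simply transitively on projective frames of $\mathbb{P}^{n-2}$. Conversely, a rational normal curve through the $p_{i}$ is abstractly a $\mathbb{P}^{1}$, and the (distinct, since the embedding is an isomorphism onto its image) preimages of the $p_{i}$ recover a point of $\mathcal{M}_{0,n}$. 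These mutually inverse bijections are upgraded to an isomorphism of schemes by repeating the construction with the universal curve over $\mathcal{M}_{0,n}$: its relative dualizing sheaf twisted by the tautological sections is relatively very ample with $h^{0}=n-1$ (by \cite{Kn}), hence has locally free rank $n-1$ pushforward, and the resulting $\mathbb{P}^{n-2}$-bundle is rigidified to the trivial one by its $n$ framing sections.

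For the second isomorphism, the same construction applied to the universal curve over $\cM_{0,n}$ — embedding the stable curves by their twisted dualizing sheaves, which are very ample with $n-1$ sections by \cite[Corollaries 1.10 and 1.11]{Kn} — gives a morphism $f\colon\cM_{0,n}\to\mathcal{H}$ restricting on $\mathcal{M}_{0,n}$ to the isomorphism just described. Since $\cM_{0,n}$ is proper and $\mathcal{M}_{0,n}$ is dense, $f(\cM_{0,n})$ is a closed set containing $V_{0}$ and contained in $\overline{V_{0}}=V$, hence equal to $V$, and $f$ is injective because the subscheme $f([C,x])$ together with the fixed points $p_{i}$ lying on it determines both $C$ and its marked points $\phi^{-1}(p_{i})$. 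It remains to see that this continuous bijection $f\colon\cM_{0,n}\to V$ is an isomorphism of schemes, which amounts to controlling $V$. First I would show that every point of $V\setminus V_{0}$ is $f$ of a boundary point of $\cM_{0,n}$: given a one-parameter family of rational normal curves through the $p_{i}$ specialising to such a point, one extends the associated family of $n$-pointed genus-zero curves across the puncture using properness of $\cM_{0,n}$ and checks that the flat limit in $\mathcal{H}$ is exactly the image of the limiting stable curve under the embedding by its twisted dualizing sheaf — the general position of the $p_{i}$ and the stability of the limit preventing embedded or multiple components. Then I would compute, for every curve $C$ parametrized by $V$, that the normal sheaf twisted by the ideal of $\{p_{1},\dots,p_{n}\}$ has $H^{1}=0$ and $H^{0}$ of dimension $n-3$: for a rational normal curve $N_{C/\mathbb{P}^{n-2}}\cong\mathcal{O}_{\mathbb{P}^{1}}(n)^{\oplus(n-3)}$, which twisted by the $n$ incidence points is trivial, and the analogous vanishing persists on the nodal degenerations. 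Hence the locus of $\mathcal{H}$ of subschemes through the $p_{i}$ is smooth of dimension $n-3$ along $V$, so $V$ is smooth and in particular normal; then $f\colon\cM_{0,n}\to V$ is a proper bijective, hence finite, morphism, which is moreover birational, so by Zariski's Main Theorem it is an isomorphism.

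The step I expect to be the main obstacle is the degeneration analysis: identifying the Hilbert-scheme flat limit of a family of rational normal curves through the $p_{i}$ with the image of the limiting Deligne-Mumford stable curve under the embedding by its twisted dualizing sheaf, together with the accompanying normal-sheaf vanishing on the reducible nodal curves. The remaining steps are either formal (properness, rigidification, Zariski's Main Theorem) or reduce to cohomology computations on $\mathbb{P}^{1}$.
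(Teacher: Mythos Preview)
The paper does not give its own proof of this theorem: it is stated as \cite[Theorem 0.1]{Ka} and attributed entirely to Kapranov, with only a one-sentence hint that the argument rests on ``a careful analysis of limits in $\cM_{0,n}$ of $1$-parameter families contained in $\mathcal{M}_{0,n}$''. So there is no in-paper proof to compare against; your outline is in fact a faithful sketch of Kapranov's original argument, and the paper's hint matches exactly the step you flag as the main obstacle.

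On the substance of your sketch: the first isomorphism and the construction of $f\colon\cM_{0,n}\to\mathcal{H}$ from the universal curve are correct and are how Kapranov proceeds. Your normal bundle computation for a smooth rational normal curve is right ($N_{C/\mathbb{P}^{n-2}}\cong\mathcal{O}_{\mathbb{P}^{1}}(n)^{\oplus(n-3)}$, so twisting by $-p_{1}-\dots-p_{n}$ gives the trivial bundle), and the Zariski Main Theorem endgame is the standard way to finish. The genuine work, as you say, is the identification of Hilbert-scheme flat limits with the anticanonical images of the limiting stable curves and the extension of the $H^{1}$-vanishing to the nodal fibres; Kapranov handles this by an explicit analysis of the possible degenerations (his ``Veronese curves''), and that is precisely where the citation is doing the heavy lifting. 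Your self-assessment of where the difficulty lies is accurate.
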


Kapranov's construction allows to translate many questions about $\cM_{0,n}$ into statements on linear systems on $\P^{n-3}$. Consider a general line $L_{i}\subset\mathbb{P}^{n-2}$ through $p_{i}$. There exists a unique rational normal curve $C_{L_{i}}$ through $p_{1},\dots,p_{n}$, and with tangent direction $L_{i}$ in $p_{i}$. Let $[C,x_{1},\dots,x_{n}]\in\cM_{0,n}$ be a stable curve, and let $\Gamma\in V_0(p_{1},\dots,p_{n})$ be the corresponding rational normal curve. Since $p_{i}\in\Gamma$ is a smooth point, by considering the tangent line $T_{p_{i}}\Gamma$ we get a morphism 
\stepcounter{thm}
\begin{equation}\label{KapMor}
\begin{array}{cccc}
f_i: & \cM_{0,n} & \longrightarrow & \mathbb{P}^{n-3}\\
 & [C,x_{1},\dots,x_{n}] & \longmapsto & T_{p_{i}}\Gamma
\end{array}
\end{equation}
Furthermore, $f_{i}$ is birational and defines an isomorphism on $\mathcal{M}_{0,n}$. The birational maps $f_j\circ f_i^{-1}$
  \[
  \begin{tikzpicture}[xscale=1.5,yscale=-1.2]
    \node (A0_1) at (1, 0) {$\cM_{0,n}$};
    \node (A1_0) at (0, 1) {$\mathbb{P}^{n-3}$};
    \node (A1_2) at (2, 1) {$\mathbb{P}^{n-3}$};
    \path (A1_0) edge [->,dashed]node [auto] {$\scriptstyle{f_{j}\circ f_{i}^{-1}}$} (A1_2);
    \path (A0_1) edge [->]node [auto] {$\scriptstyle{f_{j}}$} (A1_2);
    \path (A0_1) edge [->]node [auto,swap] {$\scriptstyle{f_{i}}$} (A1_0);
  \end{tikzpicture}
  \]
are standard Cremona transformations of $\mathbb{P}^{n-3}$ \cite[Proposition 2.12]{Ka}. For any $i = 1,\dots,n$ the class $\Psi_{i}$ is the line bundle on $\cM_{0,n}$ whose fiber on $[C,x_{1},\dots,x_{n}]$ is the tangent line $T_{p_{i}}C$. From the previous description we see that the line bundle $\Psi_{i}$ induces the birational morphism $f_{i}:\cM_{0,n}\rightarrow\mathbb{P}^{n-3}$, that is $\Psi_{i} = f_{i}^{*}\mathcal{O}_{\mathbb{P}^{n-3}}(1)$. In \cite{Ka} Kapranov proved that $\Psi_{i}$ is big and globally generated, and that the birational morphism $f_{i}$ is an iterated blow-up of the projections from $p_{i}$ of the points $p_{1},\dots,\hat{p_{i}},\dots,p_{n}$ and of all strict transforms of the linear spaces that they generate, in order of increasing dimension. 

\begin{Construction}\label{kblusym}
Fix $(n-1)$-points $p_{1},\dots,p_{n-1}\in\mathbb{P}^{n-3}$ in linear general position:
\begin{itemize}
\item[(1)] Blow-up the points $p_{1},\dots,p_{n-1}$, 
\item[(2)] Blow-up the strict transforms of the lines $\Span{p_{i_{1}},p_{i_{2}}}$, $i_{1},i_{2} = 1,\dots,n-1$,\\
\vdots
\item[($k$)] Blow-up the strict transforms of the $(k-1)$-planes $\Span{p_{i_{1}},\dots,p_{i_{k}}}$, $i_{1},\dots,i_{k} = 1,\dots,n-1$,\\
\vdots
\item[($n-4$)] Blow-up the strict transforms of the $(n-5)$-planes $\Span{p_{i_{1}},\dots,p_{i_{n-4}}}$, $i_{1},\dots,i_{n-4} = 1,\dots,n-1$.
\end{itemize}

Now, consider the moduli spaces of weighted pointed curves $X_{k}[n]:=\cM_{0,A[n]}$ for $k = 1,\dots,n-4$, such that
\begin{itemize}
\item[-] $a_{i}+a_{n}>1$ for $i=1,\dots,n-1$,
\item[-] $a_{i_{1}}+\dots+a_{i_{r}}\leq 1$ for each $\{i_{1},\dots,i_{r}\}\subset\{1,\dots,n-1\}$ with $r\leq n-k-2$,
\item[-] $a_{i_{1}}+\dots+a_{i_{r}}> 1$ for each $\{i_{1},\dots,i_{r}\}\subset\{1,\dots,n-1\}$ with $r> n-k-2$.
\end{itemize}
The composition of these blow-up morphism here above is the morphism $f_{n}:\cM_{0,n}\rightarrow\mathbb{P}^{n-3}$ induced by the psi-class $\Psi_{n}$. Identifying $\cM_{0,n}$ with $V(p_{1},\dots,p_{n})$, and fixing a general $(n-3)$-plane $H\subset\mathbb{P}^{n-2}$, the morphism $f_{n}$ associates to a curve $C\in V(p_{1},\dots,p_{n})$ the point $T_{p_{n}}C\cap H$.
\end{Construction}
In \cite[Section 2.1.2]{Ha} Hassett considers a natural variation of the moduli problem of weighted pointed rational stable curves by considering weights of the type $\widetilde{A}[n]= (a_1,\dots,a_n)$ such that $a_{i}\in\mathbb{Q}$, $0< a_i\leq 1$ for any $i = 1,\dots,n$, and $\sum_{i=1}^{n}a_{i} =2$.

By \cite[Section 2.1.2]{Ha} we may construct an explicit family of such weighted curves $\mathcal{C}(\widetilde{A})\rightarrow\cM_{0,n}$ over $\cM_{0,n}$ as an explicit blow-down of the universal curve over $\cM_{0,n}$.\\
Furthermore, if $a_i<1$ for any $i=1,\dots,n$ we may interpret the geometric invariant theory quotient $(\mathbb{P}^1)^n\quot PGL(2)$ with respect to the linearization $\mathcal{O}(a_1,\dots,a_n)$ as the moduli space $\cM_{0,\widetilde{A}[n]}$ associated to the family $\mathcal{C}(\widetilde{A})$.

\begin{Remark}\label{GITHas}
Note that we may interpret the GIT quotient $(\mathbb{P}^1)^n(b)\quot PGL(2)$ as a moduli space $\cM_{0,\widetilde{A}[n]}$ by taking the weights $a_i = \frac{2}{|b|}b_i$. Conversely, given the space $\cM_{0,\widetilde{A}[n]}$ with $(a_1,\dots,a_n) = (\frac{\alpha_1}{\beta_1},\dots,\frac{\alpha_n}{\beta_n})$ such that $\sum_{i=1}^na_i=2$ we may consider the GIT quotient $(\mathbb{P}^1)^n(b)\quot PGL(2)$ with $b_i = a_iM$, where $M= LCM(\beta_i)$.
\end{Remark}

\begin{Remark}\label{redwst}
Let $\cM_{0,\widetilde{A}[n]}$ be a moduli space with weights $\widetilde{a}_i$ summing up to two, and let $\cM_{0,A[n]}$ be a moduli  space with weights $a_i\geq \widetilde{a}_i$ for any $i=1,\dots,n$. By \cite[Theorem 8.3]{Ha} there exists a reduction morphism $\rho_{\widetilde{A}[n],A[n]}:\cM_{0,A[n]}\rightarrow\cM_{0,\widetilde{A}[n]}$ operating as the standard reduction morphisms in \ref{reduction}.
\end{Remark}

\begin{Proposition}\label{res}
Let $\phi_{\mathcal{L}}:\mathbb{P}^{n-3}\dasharrow (\mathbb{P}^1)^n(b)\quot PGL(2)\subset\mathbb{P}(H^0(\mathbb{P}^{n-3},\mathcal{L})^{*})$ be the rational map in Theorem \ref{Kum}, and let $f_i:\cM_{0,n}\rightarrow\mathbb{P}^{n-3}$ be the morphism in (\ref{KapMor}). Then there exists a reduction morphism $\rho:\cM_{0,n}\rightarrow (\mathbb{P}^1)^n(b)\quot PGL(2)$ making the following diagram 
 \[
  \begin{tikzpicture}[xscale=3.2,yscale=-1.5]
    \node (A0_0) at (0, 0) {$\cM_{0,n}$};
    \node (A1_0) at (0, 1) {$\mathbb{P}^{n-3}$};
    \node (A1_1) at (1, 1) {$(\mathbb{P}^1)^n(b)\quot PGL(2)$};
    \path (A0_0) edge [->,swap]node [auto] {$\scriptstyle{f_n}$} (A1_0);
    \path (A1_0) edge [->,dashed]node [auto] {$\scriptstyle{\phi_{\mathcal{L}}}$} (A1_1);
    \path (A0_0) edge [->]node [auto] {$\scriptstyle{\rho}$} (A1_1);
  \end{tikzpicture}
  \]
commutative.  
\end{Proposition}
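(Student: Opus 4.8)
The plan is to construct $\rho$ through the dictionary between GIT quotients of points on the line and Hassett's spaces of weighted pointed curves, and then to verify the commutativity over the interior $\mathcal M_{0,n}$.

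First, for existence. By Remark \ref{GITHas} the quotient $(\mathbb{P}^1)^n(b)\quot PGL(2)$ is the Hassett space $\cM_{0,\widetilde{A}[n]}$ attached to the weights $\widetilde a_i=\tfrac{2}{|b|}b_i$; the running hypothesis $b_i<\sum_{j\ne i}b_j$ of Theorem \ref{Kum} is exactly what guarantees $0<\widetilde a_i<1$, so this identification is legitimate, and $\sum_i\widetilde a_i=2$. Since the constant weight datum $(1,\dots,1)$ dominates $(\widetilde a_1,\dots,\widetilde a_n)$ coordinatewise, Remark \ref{redwst} (that is, \cite[Theorem 8.3]{Ha}) furnishes a reduction morphism
$$\rho:=\rho_{\widetilde A[n],(1,\dots,1)}\colon\cM_{0,n}\longrightarrow\cM_{0,\widetilde A[n]}\cong(\mathbb{P}^1)^n(b)\quot PGL(2),$$
and this is the morphism claimed in the statement.

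It remains to show $\phi_{\mathcal L}\circ f_n=\rho$. Both are rational maps $\cM_{0,n}\dasharrow(\mathbb{P}^1)^n(b)\quot PGL(2)$ from an integral scheme into a separated one, so it suffices to check that they agree on the dense open subset $\mathcal M_{0,n}\subseteq\cM_{0,n}$: two rational maps agreeing on a dense open coincide as rational maps, and because $\rho$ is everywhere defined this says precisely that the rational map $\phi_{\mathcal L}\circ f_n$ is represented by the morphism $\rho$, i.e. that the triangle commutes. Over $\mathcal M_{0,n}$ the situation is transparent: by Kapranov's construction (Theorem \ref{kaphilb} and the discussion around $(\ref{KapMor})$) the morphism $f_n$ restricts to an open immersion of $\mathcal M_{0,n}$ into $\mathbb{P}^{n-3}$ whose image avoids the linear spans $\Span{p_{i_1},\dots,p_{i_k}}$, hence lies in the locus where $\phi_{\mathcal L}$ is a genuine morphism; by Theorem \ref{Kum} (Kumar) $\phi_{\mathcal L}$ carries that image isomorphically onto the GIT-stable locus, which is the copy of $\mathcal M_{0,n}$ inside $(\mathbb{P}^1)^n(b)\quot PGL(2)$; and $\rho$ restricts to the identity on $\mathcal M_{0,n}$, since it operates as a standard reduction morphism (Remark \ref{redwst}) and those do nothing to irreducible curves with distinct marked points. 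Thus the desired equality over $\mathcal M_{0,n}$ boils down to the claim that the birational identifications of Kapranov and of Kumar are mutually compatible, i.e. that $\phi_{\mathcal L}\circ f_n$ restricted to $\mathcal M_{0,n}$ is the tautological inclusion of $\mathcal M_{0,n}$ viewed as the space of $PGL(2)$-orbits of $n$ distinct points of $\mathbb{P}^1$.

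This compatibility is the one non-formal point, and it is where the work lies. It is verified by unwinding the two constructions: $f_n$ sends a smooth pointed curve $[C,x_1,\dots,x_n]$ to the tangent direction at $p_n=\phi(x_n)$ of the rational normal curve in $\mathbb{P}^{n-2}$ through the fixed general points determined by $(x_1,\dots,x_n)$, while $\phi_{\mathcal L}$ is assembled from a basis of $PGL(2)$-invariants of weight $b$; one checks that evaluating Kumar's invariant coordinates at that tangent direction returns the orbit of $(x_1,\dots,x_n)$. This is bookkeeping internal to \cite{Ka,Ku00,Ku03}, and no spurious twist by an element of $S_n$ or by an automorphism of the quotient can intervene, since both recipes are manifestly symmetric in the marked points and both recover the same configuration. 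Granting it, $\phi_{\mathcal L}\circ f_n$ and $\rho$ agree on $\mathcal M_{0,n}$, hence everywhere, and the diagram commutes.
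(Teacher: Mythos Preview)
Your proof is correct and follows essentially the same approach as the paper: both arguments establish commutativity by checking it on the dense open $\mathcal{M}_{0,n}$, where Kapranov's and Kumar's constructions are unwound and seen to agree via the rational-normal-curve description (the paper phrases this as the theory of associated points from \cite{Ku03}). Your version is somewhat more careful in that you first construct $\rho$ explicitly as a Hassett reduction morphism via Remarks~\ref{GITHas} and~\ref{redwst} and then verify the diagram, whereas the paper leaves the existence of $\rho$ as a morphism implicit in the geometric identification.
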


\begin{proof}
As observed in \cite{Ku03} via the theory of associated points, each point $x\in \mathbb{P}^{n-3}$ which is linearly general with respect to the $n-1$ fixed points in $\mathbb{P}^{n-3}$ defines a configuration of points on the unique rational normal curve of degree $n-3$ passing through the $(n-1)+1=n$ points. Moreover, this configuration is the image of $x$ in $(\mathbb{P}^1)^n(b)\quot PGL(2)$ via $\phi_{\mathcal{L}}$. Via the identification $V_0(p_1,\dots,p_n)\cong \mathcal{M}_{0,n}$ in Theorem \ref{kaphilb}, one easily obtains the claim.
\end{proof}
From the next section on, we will always omit the vector $b$ of the polarization since it will always be $(1, \dots, 1)$.

\section{Birational geometry of GIT quotients of $(\mathbb{P}^1)^n$}\label{secbir}
In this section we will study some birational aspects of the geometry of the GIT quotients we introduced. In particular, we will describe their Mori cone and show that its extremal rays are generated by 1-dimensional strata of the boundary. As usual, we will denote by $\Sigma_m$ the GIT quotient $(\P^1)^{m+3}\quot PGL(2)$ with respect to the symmetric polarization.

\stepcounter{thm}
\subsection{The action of the standard Cremona transformation}
Let $p_1,\dots,p_{n+1}\in\mathbb{P}^n$ be general points, and $X_{n+1}^n$ be the blow-up of $\mathbb{P}^n$ at $p_1,\dots,p_{n+1}$. We may assume that $p_1 = [1:0:\dots:0],\dots,p_{n+1} = [0:\dots:0:1]$. Let us consider the standard Cremona transformation:
$$
\begin{array}{cccc}
\psi_n: & \mathbb{P}^n & \dasharrow & \mathbb{P}^n\\
 & \left[x_0:\dots:x_n\right] & \longmapsto & [\frac{1}{x_0}:\dots:\frac{1}{x_n}]
\end{array}
$$
Note that $\psi_n\circ \psi_n = Id_{\mathbb{P}^n}$, and $\psi^{-1}_n = \psi_n$. Let $H_1,\dots,H_{n+1}$ be the coordinate hyperplanes of $\mathbb{P}^n$. Then $\psi_n$ is not defined on the locus $\bigcup_{1\leq i< j\leq n+1}H_i\cap H_j$. Furthermore, $\psi_n$ is an isomorphism off of the union $\bigcup_{1\leq i\leq n+1}H_i$.

Now, $\psi_n$ induces a birational transformation $\widetilde{\psi}_n:X_{n+1}^n\dasharrow X_{n+1}^n$. Note that, since $\psi_n$ contracts the hyperplane $H_i$ spanned by the $n$ points $p_1,\dots,\hat{p}_i,\dots,p_{n+1}$ onto the point $p_i$, the map $\widetilde{\psi}_n$ maps the strict transform of $H_i$ onto the exceptional divisor $E_i$. Therefore $\widetilde{\psi}_n$ is an isomorphism in codimension one. Indeed, it is a composition of flops. In particular $\widetilde{\psi}_n$ induces an isomorphism $\Pic(X_{n+1}^n)\rightarrow\Pic(X_{n+1}^n)$.

\begin{say}
Let us define $\mathcal{L}_{2g-1}: = \mathcal{L}_{2g-1,g}(g-1,\dots,g-1)$ as the linear system of degree $g$ forms on $\mathbb{P}^{2g-1}$ vanishing with multiplicity $g-1$ at $2g+1$ general points $p_{1},\dots,p_{2g+1}\in\mathbb{P}^{2g-1}$. Note that in this case $n = 2g+2$ and $b = (1,\dots,1)$. In \cite[Theorem 4.1]{Ku00} Kumar proved that $\mathcal{L}_{2g-1}$ induces a birational map
\stepcounter{thm}
\begin{equation}\label{mapseg}
\sigma_g:\mathbb{P}^{2g-1}\dasharrow\mathbb{P}(H^0(\mathbb{P}^{2g-1},\mathcal{L}_{2g-1})^{*}) = \mathbb{P}^N
\end{equation}
and that the GIT quotient $\Sigma_{2g-1}$, that is the Segre $g$-variety, in Kumar's paper \cite{Ku00}, is obtained as the closure of the image of $\sigma_g$ in $\mathbb{P}(H^0(\mathbb{P}^{2g-1},\mathcal{L}_{2g-1})^{*})$.

Furthermore when $n = 2g+1$ and $b = (1,\dots,1)$, by Theorem \ref{Kum}, $\Sigma_{2g}\subset \mathbb{P}(H^0(\mathbb{P}^{2g},\mathcal{L}_{2g})^*)$ 
is the closure of the image of the rational map induced by the linear 
system $\mathcal{L}_{2g}$, where we define $\mathcal{L}_{2g}$ as given by degree $2g+1$ hypersurfaces in $\mathbb{P}^{2g}$ with multiplicity $2g-1$ at $p_i$ for $i = 1,\dots,2g+2$. We will denote by 
\stepcounter{thm}
\begin{equation}\label{mapeven}
\mu_g:\mathbb{P}^{2g}\dasharrow \Sigma_{2g}\subset \mathbb{P}(H^0(\mathbb{P}^{2g},\mathcal{L}_{2g})^{*}) = \mathbb{P}^N
\end{equation}
this rational map. 
\end{say}

\begin{Lemma}\label{founCrem}
Let $D\subset\mathbb{P}^n$ be a hypersurface of degree $d$ having points of multiplicities $m_1,\dots,m_{n+1}$ in $p_1,\dots,p_{n+1}$, and let $\psi_n:\mathbb{P}^{n}\dasharrow\mathbb{P}^n$ be the standard Cremona transformation of $\mathbb{P}^n$. Then 
$$\deg(\psi_n(D)) = dn-\sum_{i=1}^{n+1}m_i \quad and \quad \mult_{p_i}\psi_n(D) = d(n-1)-\sum_{j\neq i}m_j$$
for any $i = 1,\dots,n+1$.
\end{Lemma}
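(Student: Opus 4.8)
The plan is to reduce the whole statement to a single computation in $\Pic(X_{n+1}^n)$, where $X_{n+1}^n\to\mathbb{P}^n$ is the blow-up of $p_1,\dots,p_{n+1}$ introduced before the statement and $\widetilde\psi_n\colon X_{n+1}^n\dasharrow X_{n+1}^n$ is the lift of $\psi_n$. Since $\widetilde\psi_n$ is an isomorphism in codimension one, it induces an isomorphism $\widetilde\psi_n^{*}$ of $\Pic(X_{n+1}^n)$, and the strict transform of $D$ (which is again a hypersurface, as $\psi_n$ contracts only the coordinate hyperplanes, with a consistent degeneration to degree $0$ in the exceptional case $D=H_i$) has class $\widetilde\psi_n^{*}\big(dH-\sum_i m_iE_i\big)$, where $H$ is the pull-back of the hyperplane class and $E_1,\dots,E_{n+1}$ the exceptional divisors. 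Reading off the $H$- and $E_k$-coefficients of this class then gives $\deg\psi_n(D)$ and the $\mult_{p_i}\psi_n(D)$.

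Thus the only thing to establish is the action of $\widetilde\psi_n^{*}$ on the generators. Writing $\psi_n$ in coordinates by the $n+1$ monomials $\prod_{j\neq i}x_j$ of degree $n$, one checks that $\psi_n^{*}|\O_{\mathbb{P}^n}(1)|$ consists of degree-$n$ hypersurfaces having multiplicity exactly $n-1$ at each of the $p_k$: the monomial $\prod_{j\neq i}x_j$ has multiplicity $n-1$ at $p_k$ for $k\neq i$ and multiplicity $n$ at $p_i$, so the common base multiplicity at every point is $n-1$, i.e.
$$\widetilde\psi_n^{*}H=nH-(n-1)\sum_{i=1}^{n+1}E_i.$$
Moreover $\psi_n$ contracts the coordinate hyperplane $H_k=\Span{p_i:i\neq k}$ to $p_k$ — on $H_k$ every defining monomial vanishes except $\prod_{j\neq k}x_j$ — so $\widetilde\psi_n$ carries the strict transform of $H_k$ isomorphically onto $E_k$, and since $\widetilde\psi_n$ is an isomorphism in codimension one this forces
$$\widetilde\psi_n^{*}E_k=H-\sum_{j\neq k}E_j.$$

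It remains to substitute. Using $\psi_n=\psi_n^{-1}$, so that $\widetilde\psi_n^{*}$ is an involution and genuinely computes the strict transform, one gets
$$\widetilde\psi_n^{*}\Big(dH-\sum_i m_iE_i\Big)=\Big(dn-\sum_i m_i\Big)H-\sum_k\Big(d(n-1)-\sum_{i\neq k}m_i\Big)E_k,$$
and comparison with the shape $d'H-\sum_k m'_kE_k$ of the strict transform of a hypersurface of degree $d'$ with multiplicity $m'_k$ at $p_k$ gives the two asserted formulas. There is no real obstacle in the argument; the only points needing a line of care are the identification $\widetilde\psi_n^{*}=(\widetilde\psi_n)_{*}$ on divisor classes (immediate because $\psi_n$ is an involution and an isomorphism in codimension one) and the fact that the generic component of $\psi_n^{-1}(D)$ is the strict transform and not an extra coordinate hyperplane — clear once one observes that, after factoring out the contracted hyperplanes, the surviving lowest-order terms reassemble a monomial rescaling of the tangent cone of $D$ at the relevant $p_k$, hence do not cancel. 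As a cross-check the degree statement can also be obtained by a direct monomial substitution: plugging the $\prod_{j\neq i}x_j$ into a degree-$d$ equation of $D$, the variable $x_k$ divides the result with multiplicity exactly $\mult_{p_k}D$, so the residual factor has degree $dn-\sum_i m_i$.
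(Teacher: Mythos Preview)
Your proof is correct and follows essentially the same route as the paper: lift $\psi_n$ to the blow-up $X_{n+1}^n$, compute how $H$ and the $E_i$ transform, and read off the coefficients. The paper phrases the computation via pushforward $\widetilde\psi_{n*}$ rather than pullback, but since $\psi_n$ is an involution and $\widetilde\psi_n$ is an isomorphism in codimension one these agree, as you note; your additional remarks (the explicit check of the base multiplicity $n-1$, and the monomial cross-check for the degree) are not in the paper but are harmless elaborations.
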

\begin{proof}
Let $X_{n+1}^n = Bl_{p_1,\dots,p_{n+1}}\mathbb{P}^n$, and $\widetilde{\psi}_n:X_{n+1}^n\dasharrow X_{n+1}^n$ be the birational map induced by $\psi_n$. The strict transform of $D$ in $X_{n+1}^n\dasharrow X_{n+1}^n$ can be written as $\widetilde{D} \cong dH-\sum_{i=1}^{n+1}m_iE_i$.\\
Now, since $\widetilde{\psi}_{n*}H = nH-\sum_{i=1}^{n+1}(n-1)E_i$, and $\widetilde{\psi}_{n*}E_i = H-\sum_{j\neq i}E_i$ we get the formula
$$
\begin{array}{ll}
\widetilde{\psi}_{n*}D = & d(nH-\sum_{i=1}^{n+1}E_i)-\sum_{i=1}^{n+1}m_i(H-\sum_{j\neq i}E_j)=\\
 & dnH-d\sum_{i=1}^{n+1}(n-1)E_i-\sum_{i=1}^{n+1}H+\sum_{i=1}^{n+1}m_i\sum_{j\neq i}E_j =\\ 
 & (dn-\sum_{i=1}^{n+1}m_i)H-\sum_{i=1}^{n+1}(d(n-1)-\sum_{j\neq i}m_j)E_j
\end{array} 
$$
which gives exactly the statement.
\end{proof}

\begin{Proposition}\label{autcrem}
The standard Cremona transformation $\psi_{m}:\mathbb{P}^{m}\dasharrow\mathbb{P}^{m}$ induces an automorphism of the GIT quotient $\Sigma_{m}$.
\end{Proposition}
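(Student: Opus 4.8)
The plan is to show that the standard Cremona transformation $\psi_m$ acts compatibly with the linear system defining $\Sigma_m$ (namely $\mathcal{L}_{2g-1}$ when $m=2g-1$ and $\mathcal{L}_{2g}$ when $m=2g$), and hence descends to a birational self-map of $\Sigma_m\subset\mathbb{P}^N$ which, because $\Sigma_m$ is the closure of the image of $\phi_{\mathcal{L}}$, extends to a regular automorphism. The key computational input is Lemma~\ref{founCrem}: I will apply it to a general member $D$ of $\mathcal{L}_m$ and check that $\psi_m(D)$ is again a member of $\mathcal{L}_m$, i.e. that the degree and the multiplicities at $p_1,\dots,p_{m+1}$ are preserved. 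Concretely, for $m=2g-1$ one has $D\in\mathcal{L}_{2g-1,g}(g-1,\dots,g-1)$ (with $2g+1$ points, but $\psi_{2g-1}$ uses only the first $2g$ coordinate points $p_1,\dots,p_{2g}$), so Lemma~\ref{founCrem} gives $\deg\psi_{2g-1}(D)=g(2g-1)-(2g-1)(g-1)=g$, which is correct, and $\mult_{p_i}\psi_{2g-1}(D)=g(2g-2)-(2g-2)(g-1)=g-1$ for $i\le 2g$, again correct; the multiplicity at the remaining point $p_{2g+1}$ is unchanged since $\psi_{2g-1}$ is an isomorphism near a general point. The even case $m=2g$ is entirely analogous: with $D\in\mathcal{L}_{2g,2g+1}(2g-1,\dots,2g-1)$ one checks $2g(2g+1)-2g(2g-1)=4g$... and here I must instead verify directly that the numbers $\deg\psi_{2g}(D)=d\cdot 2g-\sum m_i$ and $\mult_{p_i}=d(2g-1)-\sum_{j\ne i}m_j$ reproduce $(2g+1;2g-1,\dots,2g-1)$, which amounts to the arithmetic identity underlying Kumar's construction.

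The next step is to promote the statement ``$\psi_m$ permutes $\mathcal{L}_m$'' to ``$\psi_m$ induces an automorphism of $\Sigma_m$''. Since $\psi_m$ is an involution of $\mathbb{P}^m$ that preserves the linear system $\mathcal{L}_m$ (up to the unique projective identification of the two target spaces), it induces a linear automorphism $\tau$ of $\mathbb{P}^N=\mathbb{P}(H^0(\mathbb{P}^m,\mathcal{L}_m)^*)$ fitting into a commutative square with $\phi_{\mathcal{L}}$ on both sides and $\psi_m$ on the source. Because $\Sigma_m$ is by definition (Theorem~\ref{Kum}, and \ref{mapseg}, \ref{mapeven}) the closure of the image of $\phi_{\mathcal{L}}$, the linear map $\tau$ carries $\Sigma_m$ onto $\Sigma_m$; restricting $\tau$ to $\Sigma_m$ gives the desired automorphism. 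One small point to address: $\psi_m$ uses a specific choice of $m+1$ of the $n-1=m+2$ general points in $\mathbb{P}^m$, so one should note that the linear system $\mathcal{L}_m$ is symmetric in all of these points and that a general configuration can be put in the coordinate position required by $\psi_m$, so no generality is lost.

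I expect the main obstacle to be not the descent argument (which is formal once the linear system is shown to be Cremona-invariant) but the bookkeeping of degrees and multiplicities in the even case, where the linear system $\mathcal{L}_{2g}$ has the less symmetric-looking parameters $(\deg,\mathrm{mult})=(2g+1,2g-1)$ and where one must be careful that the Cremona transformation based at $2g+1$ of the $2g+2$ marked points does indeed fix the remaining one and rescale the system correctly; in particular one should check that $\psi_{2g}$ does not merely preserve $\mathcal{L}_{2g}$ abstractly but does so compatibly with the $S_n$-action, since that is what is used later in Section~\ref{secaut}. A clean way to avoid case analysis is to invoke Lemma~\ref{founCrem} uniformly: write $\mathcal{L}_m=\mathcal{L}_{m,d}(k,\dots,k)$ with $(d,k)=(g,g-1)$ or $(2g+1,2g-1)$ and verify once and for all that $dm-(m+1)k=d$ and $d(m-1)-mk=k$ hold in both cases — these two identities are precisely the content of the proposition, and everything else is soft.
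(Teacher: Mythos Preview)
Your approach is exactly the paper's: apply Lemma~\ref{founCrem} to a general $D\in\mathcal{L}_m$, verify that degree and multiplicities are preserved, and conclude that $\psi_m$ acts linearly on $\mathbb{P}(H^0(\mathbb{P}^m,\mathcal{L}_m)^*)$ stabilizing $\Sigma_m$. Two small corrections: your displayed arithmetic in the odd case is off (the sum $\sum m_i$ has $2g$ terms, so $\deg\psi_{2g-1}(D)=g(2g-1)-2g(g-1)=g$ and $\mult_{p_i}=g(2g-2)-(2g-1)(g-1)=g-1$; your written factors give $2g-1$ and $2g-2$ respectively), though your final uniform identities $dm-(m+1)k=d$ and $d(m-1)-mk=k$ are correct and settle both cases cleanly; and for the leftover point $p_{m+2}$ you need not just that $\psi_m$ is a local isomorphism there but that it \emph{fixes} $p_{m+2}$, which holds once one takes $p_{m+2}=[1:\cdots:1]$.
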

\begin{proof}
Let us consider the case $m = 2g-1$. The proof in the even dimensional case will be analogous. Let $D\in H^0(\mathbb{P}^{2g-1},\mathcal{L}_{2g-1})$ be a section of the linear system inducing the map (\ref{mapseg}). By Lemma \ref{founCrem} we get $\deg(\psi_{2g-1}(D)) = g(2g-1)-\sum_{i=1}^{2g}m_i = g(2g-1)-2g(g-1)=g$
and $\mult_{p_i}\psi_{2g-1}(D) = g(2g-2)-\sum_{j\neq i}m_j = g(2g-2)-(2g-1)(g-1)=g-1$
for $i=1,\dots,2g$. Furthermore, since $\psi_{2g-1}$ is an isomorphism in a neighborhood of $p_{2g+1}$ we have that $\mult_{p_{2g+1}}\psi_{2g-1}(D)=g-1$ as well.

Therefore, $\psi_{2g-1}$ acts on the sections of $\mathcal{L}_{2g-1}$, and hence it induces an automorphism $\widetilde{\psi}_{2g-1}$ of $\mathbb{P}(H^0(\mathbb{P}^{2g-1},\mathcal{L}_{2g-1})^{*})$ that keeps $\Sigma_{2g-1}$ stable.
\end{proof}

\begin{Remark}\label{cremodd}
Note that the automorphism induced by the Cremona and the group $S_{m+2}$ permuting the points $p_1,\dots,p_{m+2}\in\mathbb{P}^{m}$ generates the symmetric group $S_{m+3}$ acting on $\Sigma_{m}$ by permuting the marked points.
\end{Remark}

\stepcounter{thm}
\subsection{The odd dimensional case}\label{oddismi}
We start by studying the case where the GIT quotients parametrize an even number of points, that is $\Sigma_{2g-1}$, for $g\geq 2$. 

\begin{Proposition}\label{resseg}
Let $p_1,\dots,p_{2g+1}\in\mathbb{P}^{2g-1}$ be points in general position, and let $X^{2g-1}_{g-2}$ be the variety obtained at the step $g$ of Construction \ref{kblusym}. Then we have the following commutative diagram 
\[
  \begin{tikzpicture}[xscale=3.2,yscale=-1.5]
    \node (A0_0) at (0, 0) {$X^{2g-1}_{g-2}$};
    \node (A1_0) at (0, 1) {$\mathbb{P}^{2g-1}$};
    \node (A1_1) at (1, 1) {$\Sigma_{2g-1}\subset\mathbb{P}^N.$};
    \path (A0_0) edge [->,swap]node [auto] {$\scriptstyle{f}$} (A1_0);
    \path (A1_0) edge [->,dashed]node [auto] {$\scriptstyle{\sigma_g}$} (A1_1);
    \path (A0_0) edge [->]node [auto] {$\scriptstyle{\widetilde{\sigma}_g}$} (A1_1);
  \end{tikzpicture}
  \]
That is, the blow-up morphism $f:X^{2g-1}_{g-2}\rightarrow\mathbb{P}^{2g-1}$ resolves the rational map $\sigma_g$. In particular, $\Sigma_{2g-1}$ has exactly $\binom{2g-1}{g}$ singular points. 
\end{Proposition}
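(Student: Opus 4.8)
The plan is to identify the indeterminacy locus of $\sigma_g$ with the union of the linear spans blown up in Construction \ref{kblusym}, and then argue that $f$ resolves it because it is, essentially by definition, the composition of blow-ups along precisely those spans. First I would recall that $\sigma_g$ is induced by the linear system $\mathcal{L}_{2g-1} = \mathcal{L}_{2g-1,g}(g-1,\dots,g-1)$ on $\mathbb{P}^{2g-1}$ with base points $p_1,\dots,p_{2g+1}$. Since the general point $x\in\mathbb{P}^{2g-1}$ lies (with $p_1,\dots,p_{2g+1}$) on a unique rational normal curve of degree $2g-1$ passing through all $2g+1$ points, one sees via the associated-points description used in the proof of Proposition \ref{res} that $\sigma_g$ is the composition of $f_n$ with $\phi_{\mathcal{L}}$; so it suffices to show $f:X^{2g-1}_{g-2}\to\mathbb{P}^{2g-1}$ resolves $\sigma_g$. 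Concretely, I would show that the base scheme of $\mathcal{L}_{2g-1}$ is supported on the union of the linear subspaces $\Span{p_{i_1},\dots,p_{i_k}}$ for $k\le g-1$ (these are exactly the linear spans whose strict transforms are blown up in the first $g$ steps of Construction \ref{kblusym}, since a degree-$g$ hypersurface with multiplicity $g-1$ at $k$ of the $p_i$ must contain their $(k-1)$-plane as soon as $k(g-1) > g(k-1)$, i.e. $k<g$, and equality behaviour at $k=g$ is the borderline case handled by the last blow-up).

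Next I would verify that after the iterated blow-up $f$, the pullback linear system $f^*\mathcal{L}_{2g-1}$ minus the appropriate exceptional contributions is base-point free. The key computation is the standard one for linear systems on blow-ups of $\mathbb{P}^n$ along linear cycles in increasing dimension: the strict transform of a general member of $\mathcal{L}_{2g-1}$ under the successive blow-ups has class $gH - (g-1)\sum E_i^{(0)} - \sum(\text{lower multiplicities})E^{(k)}$ along the higher-dimensional exceptional divisors, and one checks these multiplicities are nonnegative and that the residual system separates points and tangent vectors. Here I can invoke the Brambilla–Dumitrescu–Postinghel results cited in the introduction, or simply the classical fact (going back to the Kapranov picture) that blowing up $\mathbb{P}^{n-3}$ along the spans of $n-1$ general points in order of increasing dimension makes the relevant Kapranov-type system globally generated; the present $\mathcal{L}_{2g-1}$ is of this type with $n-3 = 2g-1$, $n-1 = 2g$, plus the extra point $p_{2g+1}$ which, being in general position and of multiplicity $g-1 < g$, imposes no further base locus beyond itself. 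Thus $\widetilde\sigma_g := \phi_{f^*\mathcal{L}_{2g-1}}$ is a morphism, and by construction $\widetilde\sigma_g = \sigma_g\circ f$ on the open locus where both sides are defined, which gives the commutativity of the diagram and identifies the image with $\Sigma_{2g-1}\subset\mathbb{P}^N$ via Kumar's theorem \cite[Theorem 4.1]{Ku00}.

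The main obstacle I anticipate is the bookkeeping at the borderline step $k=g$: a degree-$g$ hypersurface singular to order $g-1$ at $g$ general points has the spanned $(g-1)$-plane meeting it with the right multiplicity only on the nose, so one must check carefully whether the strict transforms of these $(g-1)$-planes are genuinely in the base locus (hence must be blown up, which is why Construction \ref{kblusym} is run up to step $g$ and not $g-1$) or already resolved. This is exactly the content of saying $X^{2g-1}_{g-2}$ is obtained "at step $g$" of the construction — I would make this precise by comparing the index conventions in Construction \ref{kblusym} (where step $k$ blows up $(k-1)$-planes) with the dimension $2g-1$, so that step $g$ corresponds to blowing up $(g-1)$-planes, matching the base locus analysis above. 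Once the indexing is pinned down, the remaining verification that no further base points survive is a routine multiplicity count, and commutativity follows from the universal property of the linear system resolution.
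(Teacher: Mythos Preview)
Your approach is genuinely different from the paper's, and considerably more laborious. The paper's proof is a two-line modular argument: it identifies $X^{2g-1}_{g-2}$ with the Hassett space $\cM_{0,A[2g+2]}$ for $A[2g+2]=(1/g,\dots,1/g,1)$ via Construction~\ref{kblusym}, identifies $\Sigma_{2g-1}$ with $\cM_{0,\widetilde A[2g+2]}$ for $\widetilde A[2g+2]=(1/(g+1),\dots,1/(g+1))$ via Remark~\ref{GITHas}, and then invokes Proposition~\ref{res} to recognize $\widetilde\sigma_g$ as the reduction morphism obtained simply by lowering the weights. No base-locus computation is needed at all.

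Your direct linear-system approach can in principle be made to work, but as written it has real problems. First, there is an indexing confusion: $X^{2g-1}_{g-2}$ is the blow-up only through the $(g-2)$-planes, \emph{not} the $(g-1)$-planes---compare the Picard-rank formula $\rho(X^{2g-1}_{g-2}) = 1+(2g+1)+\binom{2g+1}{2}+\dots+\binom{2g+1}{g-1}$ in the proof of Proposition~\ref{picpari}. The $(g-1)$-planes are not in the base locus of $\mathcal L_{2g-1}$; their strict transforms are instead contracted by $\widetilde\sigma_g$ to the singular points of $\Sigma_{2g-1}$. So your borderline case $k=g$ resolves in the opposite direction from what your last paragraph suggests. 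Second, and relatedly, your claim that the residual system ``separates points and tangent vectors'' is false for exactly this reason---$\widetilde\sigma_g$ is not an embedding---and is in any case irrelevant, since the proposition only asks for a morphism. Finally, the core step (base-point freeness on $X^{2g-1}_{g-2}$) is asserted rather than carried out; gesturing toward Brambilla--Dumitrescu--Postinghel or ``the Kapranov picture'' does not pin down which statement you are actually using. The paper's modular route sidesteps all of this bookkeeping at once.
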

\begin{proof}
By Construction \ref{kblusym} the blow-up $X^{2g-1}_{g-2}$ may be interpreted as the moduli space $\cM_{0,A[2g+2]}$ with $A[2g+2] = \left(\frac{1}{g},\dots,\frac{1}{g},1\right)$. 

Furthermore, by Remark \ref{GITHas} $\Sigma_{2g-1}$ is the singular moduli space $\cM_{0,\widetilde{A}[2g+2]}$ with weights $A[2g+2] = \left(\frac{1}{g+1},\dots,\frac{1}{g+1}\right)$, and by Proposition \ref{res} the morphism $\widetilde{\sigma}_g:X^{2g-1}_{g-2}\rightarrow \Sigma_g$ is exactly the reduction morphism $\rho_{\widetilde{A}[2g+2],A[2g+2]}:\cM_{0,A[2g+2]}\rightarrow \cM_{0,\widetilde{A}[2g+2]}$ defined just by lowering the weights. 

Finally, note that $\widetilde{\sigma}_g$ contracts the $(g-1)$-planes through the $p_i$ and is an isomorphism elsewhere. So $\Sigma_{2g-1}$ has exactly $\binom{2g-1}{g}$ singular points coming from the contraction of these $(g-1)$-planes.  
\end{proof}

This fairly simple result has some interesting consequences, that we will illustrate in the rest of this section. Anyway, first we need a technical lemma.

\begin{Lemma}\label{teclemma}
Let $X$ be a normal projective variety and $f:X\dasharrow Y$ a birational map of projective varieties not contracting any divisor. Then $K_X \sim f^{-1}_{*}K_Y$.
\end{Lemma}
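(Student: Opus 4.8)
The statement to prove is: if $X$ is a normal projective variety and $f\colon X\dashrightarrow Y$ is a birational map of projective varieties not contracting any divisor, then $K_X \sim f^{-1}_*K_Y$.

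The plan is to pass to a common resolution and compare discrepancies. First I would take a normal projective variety $Z$ with birational morphisms $p\colon Z\to X$ and $q\colon Z\to Y$ such that $q = f\circ p$ (as rational maps); for instance one may take $Z$ to be a resolution of the graph of $f$, or simply a common resolution of singularities dominating both $X$ and $Y$. On $Z$ we can write $K_Z \sim p^*K_X + \sum a_i E_i$ and $K_Z \sim q^*K_Y + \sum b_j F_j$, where the $E_i$ are the $p$-exceptional prime divisors and the $F_j$ are the $q$-exceptional prime divisors (here I am using the standard fact that $K_Z$ and the pullback of $K$ from a normal variety differ by a divisor supported on the exceptional locus; strictly this uses that $X,Y$ are at least $\mathbb{Q}$-Gorenstein, or one works with Weil divisor classes and the natural restriction isomorphism on the smooth locus, which is where normality is used).

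The key step is the bookkeeping of exceptional divisors. The hypothesis that $f$ contracts no divisor means precisely that every prime divisor on $X$ has a well-defined strict transform which is a divisor on $Y$, and conversely every $p$-exceptional divisor $E_i$ on $Z$ is also $q$-exceptional: indeed if $E_i$ were not $q$-exceptional then its image $q(E_i)$ would be a divisor on $Y$ whose strict transform under $f^{-1}$ is $p(E_i)$, a point or lower-dimensional set in $X$, contradicting that $f$ (equivalently $f^{-1}$, since a birational map contracts no divisor iff its inverse doesn't... actually one must be slightly careful) does not contract a divisor — more cleanly, apply the hypothesis directly: $f$ not contracting a divisor says no divisor of $X$ maps to something of smaller dimension, which after resolving says the set of $p$-exceptional divisors is contained in the set of $q$-exceptional divisors. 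Combined with pushing forward by $p$: apply $p_*$ to the two expressions for $K_Z$. We get $p_*(q^*K_Y) = p_*(q^*K_Y)$; since $p_*$ kills all $p$-exceptional divisors and (by the inclusion just noted) all the $F_j$ that are $p$-exceptional, and $p_*q^*K_Y$ is exactly the strict transform $f^{-1}_*K_Y$ by definition of the pushforward of the rational map. Hence $K_X \sim p_*K_Z \sim p_*(q^*K_Y) + p_*(\sum b_j F_j) = f^{-1}_*K_Y + (\text{something supported on }p\text{-non-exceptional part of }\bigcup F_j)$, and one checks that last term vanishes because any $F_j$ that is not $p$-exceptional would have $p(F_j)$ a divisor on $X$ contracted by $f$, again contradicting the hypothesis.

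I expect the main obstacle to be making the divisor comparison rigorous without over-assuming Gorenstein hypotheses: one wants to phrase everything at the level of Weil divisor classes modulo linear equivalence, using that for a normal variety $W$ with smooth locus $W^{\mathrm{sm}}$ the restriction $\Cl(W)\to\Cl(W^{\mathrm{sm}})=\Pic(W^{\mathrm{sm}})$ is an isomorphism, so $K_W$ makes sense as a Weil divisor class and pulls back along a birational morphism that is an isomorphism over $W^{\mathrm{sm}}$. The cleanest route is to restrict the whole picture to the open locus $U\subseteq X$ over which $f$ is an isomorphism onto its image $V\subseteq Y$, note $\codim(X\setminus U)\geq 2$ and $\codim(Y\setminus V)\geq 2$ precisely because no divisor is contracted in either direction, so $K_X = (K_U$ extended$)$, $K_Y = (K_V$ extended$)$, and $f|_U\colon U\xrightarrow{\sim}V$ carries $K_U$ to $K_V$; extending back gives $f^{-1}_*K_Y = K_X$. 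This makes the proof short, and the only real content is the codimension-$\geq 2$ observation, which is exactly the no-contraction hypothesis applied to both $f$ and $f^{-1}$.
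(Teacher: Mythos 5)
Your final paragraph is exactly the paper's proof: restrict to an open $U\subseteq X$ whose complement has codimension at least two and on which $f$ is an isomorphism onto its image (normality gives definedness in codimension one, the no-contraction hypothesis makes the exceptional set small), then conclude because Weil divisor classes on a normal variety are determined in codimension one; the resolution-and-discrepancies route you sketch first is unnecessary and, as you note, would need $\mathbb{Q}$-Gorenstein hypotheses the statement does not assume. The one quibble is that $\codim(Y\setminus V)\geq 2$ does not follow from the hypothesis (the inverse of a blow-down contracts no divisor while the blow-down does), but it is also not needed: any component of $K_Y$ disjoint from $V$ is automatically contracted by $f^{-1}$ and hence already omitted from the strict transform $f^{-1}_{*}K_Y$, so comparing on $U$ alone suffices.
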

\begin{proof}
Since $X$ is normal $f$ is defined in codimension one. Let $U\subseteq X$ be a dense open subset whose complementary set has codimension at least two where $f$ is defined. Since $f$ does not contract any divisor then its exceptional set has at least codimension two, hence we may assume that $f_{|U}$ is an isomorphism onto its image. Therefore $K_{X|U} \sim (f^{-1}_{*}K_Y)_{|U}$, and since $U$ is at least of codimension two we get the statement. 
\end{proof}

The first consequence of Proposition \ref{resseg} is the following.

\begin{Lemma}\label{can}
The canonical sheaf of $\Sigma_{2g-1}\subset\mathbb{P}^N$ is $K_{\Sigma_{2g-1}}\cong \mathcal{O}_{\Sigma_{2g-1}}(-2)$.
\end{Lemma}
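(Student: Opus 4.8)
The plan is to compute $K_{\Sigma_{2g-1}}$ by pulling back along the birational map $\sigma_g : \mathbb{P}^{2g-1} \dashrightarrow \Sigma_{2g-1}$, using the resolution $f : X^{2g-1}_{g-2} \to \mathbb{P}^{2g-1}$ of Proposition \ref{resseg} together with the technical Lemma \ref{teclemma}. The key observation is that $\widetilde{\sigma}_g : X^{2g-1}_{g-2} \to \Sigma_{2g-1}$ is a reduction morphism and hence does not contract any divisor: the exceptional divisors $E_i$ of the iterated blow-up in Construction \ref{kblusym} survive as the boundary divisors of $\Sigma_{2g-1}$, and the only loci collapsed by $\widetilde{\sigma}_g$ are the higher-codimension strata. (Strictly, one should remark that $\Sigma_{2g-1}$ is $\mathbb{Q}$-factorial with $\Pic(\Sigma_{2g-1})\cong\mathbb{Z}$, so that $\widetilde\sigma_g$ is birational with no divisorial exceptional locus, and $K_{\Sigma_{2g-1}}$ makes sense as a $\mathbb{Q}$-Cartier divisor.) Then Lemma \ref{teclemma} applied to $\widetilde{\sigma}_g$ gives $K_{X^{2g-1}_{g-2}} \sim \widetilde{\sigma}_g^{-1}\,{}_* K_{\Sigma_{2g-1}}$, so it suffices to compute $K_{X^{2g-1}_{g-2}}$ and push it forward.

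First I would write $K_{X^{2g-1}_{g-2}}$ in the basis $H, E_i$ coming from the iterated blow-up $f$. Each stage of Construction \ref{kblusym} blows up (the strict transforms of) the linear spans $\langle p_{i_1},\dots,p_{i_k}\rangle$, and the discrepancy contribution of blowing up a smooth center of codimension $c$ is $(c-1)$ times the exceptional divisor; summing over the $\binom{2g+1}{k}$ spans blown up at step $k$, for $k=1,\dots,g-1$, yields $K_{X^{2g-1}_{g-2}} = f^*K_{\mathbb{P}^{2g-1}} + \sum (\text{discrepancies}) = -2g\,H + (\text{combination of exceptional classes})$. Next I would identify the pullback $\widetilde{\sigma}_g^* \mathcal{O}_{\Sigma_{2g-1}}(1)$: by construction of $\sigma_g$ via the linear system $\mathcal{L}_{2g-1} = \mathcal{L}_{2g-1,g}(g-1,\dots,g-1)$, this pullback is the strict transform class $g\,H - (g-1)\sum_{i=1}^{2g+1} E_i$ (together with the appropriate multiplicities along the higher blow-up stages, which is exactly the content of $f$ resolving $\sigma_g$). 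Then I would solve for the rational number $\lambda$ with $K_{X^{2g-1}_{g-2}} = \lambda\,\widetilde{\sigma}_g^*\mathcal{O}_{\Sigma_{2g-1}}(1) + (\text{$\widetilde\sigma_g$-exceptional})$, reading off the coefficient of $H$: since $K$ has $H$-coefficient $-2g$ and $\widetilde\sigma_g^*\mathcal O(1)$ has $H$-coefficient $g$, we get $\lambda = -2$; one then checks the exceptional part vanishes after pushing forward, which by Lemma \ref{teclemma} it must. This gives $K_{\Sigma_{2g-1}} \cong \mathcal{O}_{\Sigma_{2g-1}}(-2)$.

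Alternatively — and this is probably the cleanest route — one can use the moduli-theoretic identification from the proof of Proposition \ref{resseg}: $X^{2g-1}_{g-2} = \overline{\mathcal{M}}_{0,A[2g+2]}$ with $A = (\frac1g,\dots,\frac1g,1)$ and $\Sigma_{2g-1} = \overline{\mathcal{M}}_{0,\widetilde A[2g+2]}$ with $\widetilde A = (\frac{1}{g+1},\dots,\frac{1}{g+1})$, with $\widetilde\sigma_g$ the reduction morphism. One can then invoke the known formula for the canonical class of Hassett spaces (expressing $K$ in terms of $\psi$-classes and boundary), compare with the projective embedding, and again deduce the coefficient $-2$; but this requires importing more machinery, so I would present the direct discrepancy computation as the main argument and mention this as a check.

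\textbf{Main obstacle.} The delicate point is bookkeeping the exceptional divisors: one must verify that $\widetilde{\sigma}_g$ contracts \emph{no} divisor (so Lemma \ref{teclemma} applies and the $E_i$-coefficients are irrelevant to the final answer), and that the coefficient of $H$ in $\widetilde\sigma_g^*\mathcal O_{\Sigma_{2g-1}}(1)$ is exactly the degree $g$ of $\mathcal{L}_{2g-1}$ — i.e.\ that $\sigma_g$ is induced by the \emph{complete} linear system and $f$ genuinely resolves it as stated in Proposition \ref{resseg}. Once those two facts are in hand, the rest is the routine discrepancy arithmetic and reading off one coefficient.
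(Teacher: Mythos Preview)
Your overall strategy---compare the canonical class to the pullback of $\mathcal{O}(1)$ and read off the ratio of $H$-coefficients---is the right one, and the arithmetic $-2g/g=-2$ is exactly what drives the answer. But the execution has a genuine gap.

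The central error is the claim that $\widetilde{\sigma}_g : X^{2g-1}_{g-2} \to \Sigma_{2g-1}$ does not contract any divisor. This is false for $g\geq 3$. The variety $X^{2g-1}_{g-2}$ is the iterated blow-up of $\mathbb{P}^{2g-1}$ along points, lines, \dots, $(g-2)$-planes spanned by subsets of $\{p_1,\dots,p_{2g+1}\}$, and the reduction morphism $\widetilde{\sigma}_g$ contracts \emph{all} of the exceptional divisors lying over the positive-dimensional linear subspaces (this is what forces the Picard rank to drop from $1+(2g+1)+\binom{2g+1}{2}+\cdots+\binom{2g+1}{g-1}$ down to $1$; cf.\ the proofs of Propositions~\ref{picsegre} and~\ref{picpari}). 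So Lemma~\ref{teclemma} does not apply to $\widetilde{\sigma}_g$, and your later parenthetical ``$+\,(\widetilde{\sigma}_g\text{-exceptional})$'' tacitly acknowledges this while contradicting the earlier claim. A second, smaller error: $\Sigma_{2g-1}$ is \emph{not} $\mathbb{Q}$-factorial (many boundary divisors are only Weil), so that remark should be dropped.

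The paper's fix is simple and worth internalizing: instead of the full resolution $X^{2g-1}_{g-2}$, work with the blow-up $X^{2g-1}_1$ of $\mathbb{P}^{2g-1}$ at the $2g+1$ points only, and consider the induced birational \emph{map} $f_g : X^{2g-1}_1 \dasharrow \Sigma_{2g-1}$. This map is not a morphism, but it is small---its exceptional locus consists only of the strict transforms of the $(g-1)$-planes $\langle p_{i_1},\dots,p_{i_g}\rangle$, which have codimension $g\geq 2$---so Lemma~\ref{teclemma} applies directly. On $X^{2g-1}_1$ the computation is a one-liner:
\[
-K_{X^{2g-1}_1} \sim 2gH-(2g-2)\sum_{i=1}^{2g+1}E_i = 2\Bigl(gH-(g-1)\sum_{i=1}^{2g+1}E_i\Bigr) \sim f_g^*\mathcal{O}_{\Sigma_{2g-1}}(2),
\]
since $gH-(g-1)\sum E_i$ is exactly the strict transform of the linear system $\mathcal{L}_{2g-1}$ inducing $\sigma_g$. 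No higher exceptional divisors, no discrepancy bookkeeping. Your approach could be salvaged by carefully tracking all the higher $E_{I}$-coefficients in both $K_{X^{2g-1}_{g-2}}$ and $\widetilde{\sigma}_g^*\mathcal{O}(1)$ and verifying they match, but choosing the right model makes that unnecessary.
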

\begin{proof}
Let $X^{2g-1}_1$ be the blow-up of $\mathbb{P}^{2g-1}$ at $2g+1$ general points $p_1,\dots,p_{2g+1}$, and let $f_g:X^{2g-1}_1\dasharrow \Sigma_{2g-1}$ be the birational map induced by the morphism $\widetilde{\sigma}_g:X^{2g-1}_{g-2}\rightarrow\Sigma_{2g-1}$ in Proposition \ref{resseg}. We will denote by $H$ the pull-back in $X_1^{2g-1}$ of the hyperplane section of $\mathbb{P}^{2g-1}$, and by $E_{1},\dots,E_{2g+1}$ the exceptional divisors.  

The interpretation of $\widetilde{\sigma}_g$ as a reduction morphism in the proof of Proposition \ref{resseg} yields that $f_g$ does not contract any divisor. Indeed $f_g$ contracts just the strict transforms of the $(g-1)$-planes generated by $g$ of the blown-up points. Since $\sigma_g$ is induced by the linear system $\mathcal{L}_{2g-1}$, the pull-back via $f_g$ of a hyperplane section of $\Sigma_{2g-1}$ is the strict transform of a hypersurface of degree $g$ in $\mathbb{P}^{2g-1}$ having multiplicity $g-1$ at $p_1,\dots,p_{2g+1}$. Since
$$-K_{X^{2g-1}_1}\sim 2gH-(2g-2)\sum_{i=1}^{2g+1}E_i = 2\left(gH-(g-1)\sum_{i=1}^{2g+1}E_i\right)$$ 
we have that $-K_{X^{2g-1}_1}\sim f_g^{*}\mathcal{O}_{\Sigma_{2g-1}}(2)$. Now, in order to conclude it is enough to apply Lemma \ref{teclemma} to the birational map $f_g:X^{2g-1}_1\dasharrow \Sigma_{2g-1}$.
\end{proof}

\begin{Proposition}\label{picsegre}
The divisor class group of $\Sigma_{2g-1}$ is $\Cl(\Sigma_{2g-1})\cong \mathbb{Z}^{2g+2}$. Furthermore $\Pic(\Sigma_{2g-1})$ is torsion free.
\end{Proposition}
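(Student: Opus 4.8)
The plan is to compute $\Cl(\Sigma_{2g-1})$ from the birational morphism $\widetilde{\sigma}_g\colon X^{2g-1}_{g-2}\to\Sigma_{2g-1}$ furnished by Proposition \ref{resseg}. The tool is the standard fact that if $\pi\colon X\to Z$ is a proper birational morphism of normal varieties, then there is a closed subset $W\subset Z$ of codimension at least two such that $\pi$ is an isomorphism over $Z\setminus W$; consequently $\pi_*\colon\Cl(X)\to\Cl(Z)$ is surjective with kernel generated by the classes of the $\pi$-exceptional prime divisors, so that $\Cl(Z)\cong\Cl(X)\big/\big\langle[E]:E\text{ a }\pi\text{-exceptional prime divisor}\big\rangle$.

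First I would record that $X:=X^{2g-1}_{g-2}$, being the iterated blow-up of $\mathbb{P}^{2g-1}$ in the $2g+1$ general points $p_1,\dots,p_{2g+1}$ and then in the strict transforms of the linear spans $S=\langle p_{i_1},\dots,p_{i_k}\rangle$ of dimension $1\le k-1\le g-1$ (Construction \ref{kblusym}), is smooth, so that $\Cl(X)=\Pic(X)$ is the free abelian group on the classes $H$ (pull-back of the hyperplane), $E_1,\dots,E_{2g+1}$ (exceptional over the points), and $E_S$ (exceptional over the spans). The core of the argument is to identify the $\widetilde{\sigma}_g$-exceptional prime divisors. By Proposition \ref{resseg}, $\widetilde{\sigma}_g$ is the reduction morphism $\rho_{\widetilde{A}[2g+2],A[2g+2]}$ with $A[2g+2]=(\tfrac1g,\dots,\tfrac1g,1)$ and $\widetilde{A}[2g+2]=(\tfrac1{g+1},\dots,\tfrac1{g+1})$, and under the Kapranov dictionary of Construction \ref{kblusym} the divisor $E_i$ is the boundary divisor $D_{\{i,\,n\}}$ and $E_S$ is $D_{\{i_1,\dots,i_k,\,n\}}$, where $n=2g+2$. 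Lowering the weights to $\widetilde{A}[2g+2]$: the two-element set $\{i,n\}$ has total $\widetilde{A}$-weight $\tfrac{2}{g+1}\le1$, so $E_i$ is carried birationally onto the codimension-one locus where the $i$-th and the $n$-th points coincide, and is not contracted; on the other hand each set $\{i_1,\dots,i_k,n\}$ with $2\le k\le g$ has total $\widetilde{A}$-weight $\tfrac{k+1}{g+1}\le1$, so the corresponding tail collapses and $E_S$ is sent onto a locus of codimension at least $k\ge2$, hence is contracted. Therefore the $\widetilde{\sigma}_g$-exceptional prime divisors are precisely the $E_S$ with $\dim S\ge1$, and
\[
\Cl(\Sigma_{2g-1})\;\cong\;\Cl(X)\big/\big\langle[E_S]:\dim S\ge1\big\rangle\;\cong\;\mathbb{Z}\langle\overline{H},\overline{E}_1,\dots,\overline{E}_{2g+1}\rangle\;\cong\;\mathbb{Z}^{2g+2},
\]
the quotient being free because one is dividing the free group $\Cl(X)$ by a subgroup spanned by part of a basis.

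For the last assertion, since $\Sigma_{2g-1}$ is normal, being a GIT quotient of a smooth variety, the natural map $\Pic(\Sigma_{2g-1})\to\Cl(\Sigma_{2g-1})$ is injective, and a subgroup of $\mathbb{Z}^{2g+2}$ is torsion free; hence $\Pic(\Sigma_{2g-1})$ is torsion free. I expect the main obstacle to be the bookkeeping in the previous paragraph: one has to match the exceptional divisors of the iterated blow-up with the boundary divisors of the Hassett space and verify the weight inequalities that govern which of them the reduction morphism contracts. An equivalent and arguably cleaner packaging of that step is to note that the birational map $f_g\colon X^{2g-1}_1\dasharrow\Sigma_{2g-1}$ of Lemma \ref{can} contracts no divisor — its exceptional locus consists of strict transforms of the positive-dimensional spans, of codimension at least $g\ge2$ — and to check that its inverse contracts no divisor either; then $f_g$ is an isomorphism in codimension one, and since deleting a closed subset of codimension at least two does not affect the class group of a normal variety, $\Cl(\Sigma_{2g-1})\cong\Cl(X^{2g-1}_1)\cong\mathbb{Z}^{2g+2}$.
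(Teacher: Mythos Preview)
Your argument is correct, and it is genuinely different from---and more elementary than---the route taken in the paper. The paper passes to a general hyperplane section $Y=H\cap\Sigma_{2g-1}$, which is smooth by Bertini, pulls it back to a smooth divisor $X\subset X^{2g-1}_{g-2}$, invokes the classical Grothendieck--Lefschetz theorem to identify $\Pic(X)\cong\Pic(X^{2g-1}_{g-2})$, then uses the Ravindra--Srinivas Grothendieck--Lefschetz theorem for normal varieties to get $\Cl(\Sigma_{2g-1})\cong\Cl(Y)=\Pic(Y)$, and finally Kleiman's injectivity result to conclude that $\Pic(\Sigma_{2g-1})\hookrightarrow\Pic(Y)$ is torsion free. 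By contrast, you work directly with the resolution $\widetilde{\sigma}_g$: the quotient description $\Cl(Z)\cong\Cl(X)/\langle\text{exceptional divisors}\rangle$ for a proper birational morphism of normal varieties, together with your identification (via the Hassett weight bookkeeping) of exactly which $E_S$ are contracted, gives $\Cl(\Sigma_{2g-1})\cong\mathbb{Z}^{2g+2}$ immediately; and the torsion-freeness of $\Pic$ follows from the standard injection $\Pic\hookrightarrow\Cl$ on a normal variety, with no need for Lefschetz-type input. Your approach also avoids the paper's need to treat the case $g=2$ separately (the hyperplane-section argument requires $\dim>2$ for Grothendieck--Lefschetz), so it is uniform in $g$.

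Two small remarks. First, your alternative ``cleaner packaging'' via $f_g\colon X^{2g-1}_1\dasharrow\Sigma_{2g-1}$ is also valid, but the claim that $f_g^{-1}$ contracts no divisor does require a word of justification: every exceptional divisor of the blow-down $X^{2g-1}_{g-2}\to X^{2g-1}_1$ lies over a positive-dimensional span and is therefore also contracted by $\widetilde{\sigma}_g$, so nothing new appears on the $\Sigma_{2g-1}$ side. Second, your weight computation and your stated range for the blown-up spans are mutually consistent (spans of $k$ points with $2\le k\le g$ give exactly the contracted divisors), though you may want to double-check against the paper's own indexing conventions for $X^{2g-1}_{g-2}$, which are not entirely uniform across Propositions~\ref{picsegre} and~\ref{picpari}.
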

\begin{proof}
The classical case of the Segre cubic $\Sigma_3\subset\mathbb{P}^4$ has been treated in \cite[Section 3.2.2]{Hu96}. Hence we may assume that $g\geq 2$. 

Let $Y = H\cap\Sigma_{2g-1}$ be a general hyperplane section of $\Sigma_{2g-1}$. Since $\dim(\Sing(\Sigma_{2g-1}))=0$ by Bertini's theorem, see \cite[Corollary 10.9]{Ha77} and \cite[Remark 10.9.2]{Ha77}, $Y$ is smooth. Note that $X = (\widetilde{\sigma}_g)^{-1}(Y)\subset X^{2g-1}_{g-2}$ is the strict transform via $f$ of a general element of the linear system $\mathcal{L}_{2g-1}$ inducing $\sigma_g$. Therefore, $X$ is smooth and $\widetilde{\sigma}_{g|X}:X\rightarrow Y$ is a divisorial contraction between smooth varieties.

Since $\dim(X)>2$, the Grothendieck-Lefschetz theorem, see for instance \cite[Theorem A]{Ba78}, yields that the natural restriction morphism $\Pic(X^{2g-1}_{g-2})\rightarrow\Pic(X)$ is an isomorphism. Therefore $\Pic(X)\cong \Pic(X^{2g-1}_{g-2})\cong \mathbb{Z}^h$, with $h  = 1+(2g+1)+\binom{2g+1}{2}+\dots+\binom{2g+1}{g}$. By the interpretation of $\widetilde{\sigma}_{g}$ as a reduction morphism in Proposition \ref{resseg} we know that the codimension one part of the exceptional locus of $\widetilde{\sigma}_{g}$ consists of the exceptional divisors the blown-up positive dimensional linear subspaces of $\mathbb{P}^{2g-1}$. Therefore, $\Pic(Y)\cong \mathbb{Z}^{2g+2}$. Indeed $\Pic(Y)$ is generated by the images via $\widetilde{\sigma}_{g|X}$ of the pull-back of the hyperplane section of $\mathbb{P}^{2g-1}$ and of the exceptional divisors over the $2g+1$ blown-up points. However, since $\Sigma_{2g-1}$ is not smooth we can not conclude by the Grothendieck-Lefschetz theorem that its Picard group is isomorphic to $\mathbb{Z}^{2g+2}$ as well.

On the other hand, thanks to a version for normal varieties of the Grothendieck-Lefschetz theorem \cite[Theorem 1]{RS06}, we get that $\Cl(\Sigma_{2g-1})\cong\Cl(Y)$, and since $Y$ is smooth we have $\Cl(Y)\cong\Pic(Y)$. 

Finally, by \cite[Corollary 2, Page 305]{Kl66} we have that, even when the ambient variety is singular, the restriction morphism in the Grothendieck-Lefschetz theorem is injective. Therefore, we have an injective morphism $\Pic(\Sigma_{2g-1})\hookrightarrow \Pic(Y)\cong\mathbb{Z}^{2g+2}$, and hence $\Pic(\Sigma_{2g-1})$ is torsion free. 
\end{proof}

This allows us to compute the Picard group of $\Sigma_{2g-1}$. By different methods, this computation was also carried out in \cite{MS16}. Note that $\Sigma_{2g-1}$ is not $\mathbb{Q}$-factorial, and many boundary divisors are just Weil divisors. This is the reason why the rank of $\Pic(\Sigma_{2g-1})$ drops dramatically from the rank of $\Cl(\Sigma_{2g-1})$.

\begin{Proposition}\label{picpari}
The Picard group of the GIT quotient $\Sigma_{2g-1}\subset\mathbb{P}^N$ is $\Pic(\Sigma_{2g-1})\cong \mathbb{Z}\left\langle H\right\rangle$, where $H$ is the hyperplane class. In particular, we have that $\Nef(\Sigma_{2g-1})\cong\mathbb{R}^{\geq 0}$.
\end{Proposition}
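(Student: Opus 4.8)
The plan is to combine the description of the class group in Proposition \ref{picsegre} with the action of the symmetric group $S_{2g+2}$ on $\Sigma_{2g-1}$ produced in Proposition \ref{autcrem} and Remark \ref{cremodd}; we may assume $g\geq 2$, since $\Sigma_1\cong\mathbb{P}^1$. By Proposition \ref{picsegre}, $\Pic(\Sigma_{2g-1})$ is torsion-free and is a subgroup of $\Cl(\Sigma_{2g-1})\cong\mathbb{Z}^{2g+2}$; unwinding the proof of that proposition, $\Cl(\Sigma_{2g-1})$ has a $\mathbb{Z}$-basis $\mathsf{h},\mathsf{e}_1,\dots,\mathsf{e}_{2g+1}$ given by the classes of the proper image of a hyperplane of $\mathbb{P}^{2g-1}$ and of the $2g+1$ exceptional divisors. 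Since $\sigma_g$ is induced by the linear system $\mathcal{L}_{2g-1}$ of degree $g$ hypersurfaces of multiplicity $g-1$ at the $2g+1$ points (see the proof of Lemma \ref{can}), the hyperplane class $H$ of $\Sigma_{2g-1}\subset\mathbb{P}^N$ equals $g\,\mathsf{h}-(g-1)\sum_{i=1}^{2g+1}\mathsf{e}_i$; as $\gcd(g,g-1)=1$ this is a primitive vector of $\Cl(\Sigma_{2g-1})$, and it is of course Cartier, so $\mathbb{Z}\langle H\rangle\subseteq\Pic(\Sigma_{2g-1})$.

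Next I would exploit the symmetry. Since automorphisms preserve Cartier divisors, $\Pic(\Sigma_{2g-1})$ is an $S_{2g+2}$-submodule of $\Cl(\Sigma_{2g-1})$, and $H$ is $S_{2g+2}$-fixed because the polarization defining the embedding is permutation-symmetric. The subgroup $S_{2g+1}$ permuting $p_1,\dots,p_{2g+1}$ permutes the $\mathsf{e}_i$ and fixes $\mathsf{h}$, so $\Cl(\Sigma_{2g-1})\otimes\mathbb{Q}$ is, as an $S_{2g+1}$-module, two copies of the trivial representation plus the $2g$-dimensional standard one, and its $S_{2g+1}$-invariant part is the plane $\langle H,\sum_i\mathsf{e}_i\rangle$. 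The heart of the argument is to compute the remaining generator, the Cremona involution $\psi_{2g-1}$: through the birational model of Propositions \ref{resseg} and \ref{autcrem} the class group $\Cl(\Sigma_{2g-1})$ gets identified, compatibly with $\psi_{2g-1}$, with the Picard lattice of $\Bl_{p_1,\dots,p_{2g+1}}\mathbb{P}^{2g-1}$, on which $\psi_{2g-1}$ acts by the formulas of Lemma \ref{founCrem}; a short computation with those formulas shows $\psi_{2g-1}$ does not fix $\sum_i\mathsf{e}_i$. Being an involution that fixes the line $\mathbb{Q}\langle H\rangle$ but not the plane $\langle H,\sum_i\mathsf{e}_i\rangle$, $\psi_{2g-1}$ has $\mathbb{Q}\langle H\rangle$ as its whole fixed locus inside that plane, whence $(\Cl(\Sigma_{2g-1})\otimes\mathbb{Q})^{S_{2g+2}}=\mathbb{Q}\langle H\rangle$; together with the branching rule $\mathrm{std}_{2g+2}|_{S_{2g+1}}=\mathbf{1}\oplus\mathrm{std}_{2g+1}$ this forces a decomposition $\Cl(\Sigma_{2g-1})\otimes\mathbb{Q}\cong\mathbb{Q}\langle H\rangle\oplus V$ with $V$ the irreducible $(2g+1)$-dimensional standard representation of $S_{2g+2}$.

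The rest is formal. As $\Pic(\Sigma_{2g-1})\otimes\mathbb{Q}$ is an $S_{2g+2}$-submodule of $\mathbb{Q}\langle H\rangle\oplus V$ containing $H$, irreducibility of $V$ leaves only two possibilities: $\Pic(\Sigma_{2g-1})\otimes\mathbb{Q}=\mathbb{Q}\langle H\rangle$ or $\Pic(\Sigma_{2g-1})\otimes\mathbb{Q}=\Cl(\Sigma_{2g-1})\otimes\mathbb{Q}$. The latter would mean $\Sigma_{2g-1}$ is $\mathbb{Q}$-factorial, which fails: for $g=2$ it is the Segre cubic with its ten nodes, and in general the singular points of $\Sigma_{2g-1}$ are locally cones over the Segre variety $\mathbb{P}^{g-1}\times\mathbb{P}^{g-1}\subset\mathbb{P}^{g^2-1}$, whose class group at the vertex is $\mathbb{Z}^2/\mathbb{Z}(1,1)\cong\mathbb{Z}$ and hence not torsion, so that, say, the boundary divisor $\mathsf{e}_1$ is not $\mathbb{Q}$-Cartier. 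Therefore $\Pic(\Sigma_{2g-1})$ has rank $1$; being torsion-free and containing the primitive class $H$, it equals $\mathbb{Z}\langle H\rangle$. Finally $\Pic(\Sigma_{2g-1})\otimes\mathbb{R}\cong\mathbb{R}$, and $\Nef(\Sigma_{2g-1})$ is a full-dimensional closed convex cone containing the ample class $H$, so $\Nef(\Sigma_{2g-1})=\mathbb{R}^{\geq 0}\langle H\rangle\cong\mathbb{R}^{\geq 0}$.

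The main obstacle is the second paragraph: making the identification of $\Cl(\Sigma_{2g-1})$ with the Picard lattice of the blow-up equivariant for the Cremona action, and then running the representation-theoretic bookkeeping cleanly (in particular checking $(\Cl\otimes\mathbb{Q})^{S_{2g+2}}$ is one-dimensional). A more hands-on alternative replaces this by a direct proof that the boundary Weil divisors of $\Sigma_{2g-1}$ fail to be $\mathbb{Q}$-Cartier, which comes down to the same local analysis at the cone singularities; everything else in the argument is formal.
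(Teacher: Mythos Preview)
Your argument is correct and follows a genuinely different path from the paper's. The paper works on the resolution $\widetilde{\sigma}_g:X^{2g-1}_{g-2}\to\Sigma_{2g-1}$ and computes the drop in Picard number directly: besides the exceptional divisors over positive-dimensional centers, $\widetilde{\sigma}_g$ contracts the strict transforms of the $(g-1)$-planes $\langle p_{i_1},\dots,p_{i_g}\rangle$, and the classes $(g-1)l-e_{i_1}-\cdots-e_{i_g}$ of rational normal curves in these planes already span the hyperplane $\{gl+(g-1)\sum e_i=0\}$ in $N_1(X_1^{2g-1})_{\mathbb{R}}$, forcing $\rho(\Sigma_{2g-1})=1$; torsion-freeness and the existence of lines then give $\Pic=\mathbb{Z}\langle H\rangle$. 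Your route instead exploits the $S_{2g+2}$-symmetry: you identify $\Cl\otimes\mathbb{Q}$ with $\mathbf{1}\oplus\mathrm{std}_{2g+1}$ (using the Cremona to see that the $S_{2g+1}$-invariant plane has one-dimensional $S_{2g+2}$-fixed part), so the $S_{2g+2}$-submodule $\Pic\otimes\mathbb{Q}$ is either $\mathbb{Q}\langle H\rangle$ or everything, and you exclude the second option by the local cone-over-$\mathbb{P}^{g-1}\times\mathbb{P}^{g-1}$ structure at the singular points.

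Two comments. First, the paper's argument is self-contained at this point in the text, whereas yours borrows the local description of the singularities that the paper only records later (Lemma~\ref{tangcone}, quoting \cite{HMSV09}); this is not circular, but worth flagging. Second, the step you single out as the main obstacle---transporting the Cremona action from $\Pic(\Bl_{p_1,\dots,p_{2g+1}}\mathbb{P}^{2g-1})$ to $\Cl(\Sigma_{2g-1})$---is indeed the place needing care: once you note that $p_{2g+1}=[1:\dots:1]$ is fixed by $\psi_{2g-1}$, the formulas of Lemma~\ref{founCrem} (with $E_{2g+1}$ fixed) push forward along the small map $X_1^{2g-1}\dasharrow\Sigma_{2g-1}$ to give the action on $\Cl(\Sigma_{2g-1})$, and your computation that $\sum_i\mathsf{e}_i$ is not fixed goes through. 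The trade-off is that the paper's curve-counting gives concrete geometric information about the resolution that feeds into later sections, while your symmetry argument is shorter and would adapt to any target of a Hassett reduction with a large permutation symmetry, provided one knows a single non-$\mathbb{Q}$-Cartier Weil divisor.
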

\begin{proof}
Let $\widetilde{\sigma}_g: X^{2g-1}_{g-2}\rightarrow \Sigma_{2g-1}\subset\mathbb{P}^N$ be the resolution of the birational map $\sigma_g:\mathbb{P}^{2g+1}\dasharrow \Sigma_{2g-1}$ in Proposition \ref{resseg}. Note that $\Pic(X^{2g-1}_{g-2})\cong \mathbb{Z}^{\rho(X^{2g-1}_{g-2})}$, where $\rho(X^{2g-1}_{g-2}) = 1+(2g+1)+\binom{2g+1}{2}+\dots+\binom{2g+1}{g}$, and that $\widetilde{\sigma}_g$ contracts all the exceptional divisors of the blow-up $f:X^{2g-1}_{g-2}\rightarrow\mathbb{P}^{2g-1}$ over positive dimensional linear subspaces. Now, let $E_i\subset X_1^{2g-1}$ be the exceptional divisor over the point $p_i\in \mathbb{P}^{2g-1}$, where $X^{2g-1}_{1}$ is the blow-up of $\mathbb{P}^{2g-1}$ at $p_1,\dots,p_{2g+1}$, and let $\widetilde{E}_i$ be its strict transform in $X^{2g-1}_{g-2}$. Furthermore, denote by $e_i$ the class of a general line in $E_i\subset X_{0}^{2g-1}$. 

By Proposition \ref{resseg} we know that, besides the divisors over the positive dimensional linear subspaces, $\widetilde{\sigma}_g$ also contracts the strict transforms $S_1,\dots,S_r$, with $r = \binom{2g+1}{g}$, in $X^{2g-1}_{g-2}$ of the $(g-1)$-planes $\left\langle p_{i_1},\dots,p_{i_g}\right\rangle$. Note that the contraction of $S_i$ is given by the contraction of the strict transforms of the degree $g-1$ rational normal curves in $\left\langle p_{i_1},\dots,p_{i_g}\right\rangle$ passing through $p_{i_1},\dots,p_{i_g}$. In fact, any degree $g-1$ rational normal curve through $p_{i_1},\dots,p_{i_g}$ is contained in $\left\langle p_{i_1},\dots,p_{i_g}\right\rangle$ and a morphism contracts $S_i$ to a point if and only if it contracts all these curves to a point. We may write the class in the cone $N_1(X^{2g-1}_{g-2})_{\mathbb{R}}\cong \mathbb{R}^{\rho(X^{2g-1}_{g-2})}$ of the strict transform of such a rational normal curve as
\stepcounter{thm}
\begin{equation}\label{classrnc}
(g-1)l-e_{i_1}-\dots-e_{i_g} 
\end{equation}
where $l$ is the pull-back of a general line in $\mathbb{P}^{2g-1}$. Note that the classes in (\ref{classrnc}) generate the hyperplane
$$\left\lbrace gl+\sum_{j=1}^{2g+1}(g-1)e_i = 0\right\rbrace$$
in $N_1(X^{2g-1}_{1})_{\mathbb{R}}\cong \mathbb{R}^{2g+2}$. Therefore, the birational morphism $\widetilde{\sigma}_g: X^{2g-1}_{g-2}\rightarrow \Sigma_{2g-1}$ contracts the locus spanned by classes of curves generating a subspace of dimension $\binom{2g+1}{2}+\dots+\binom{2g+1}{g} + 2g+1$ of $N_1(X^{2g-1}_{g-2})_{\mathbb{R}}\cong \mathbb{R}^{\rho(X^{2g-1}_{g-2})}$, and then 
$$\rho(\Sigma_{2g-1}) = \rho(X^{2g-1}_{g-2})-\left(\binom{2g+1}{2}+\dots+\binom{2g+1}{g} + 2g+1\right) = 1$$
Since by Proposition \ref{picsegre} the Picard group of $\Sigma_{2g-1}$ is torsion free and $\Sigma_{2g-1}\subset\mathbb{P}^N$ contains lines we conclude that $\Pic(\Sigma_{2g-1})=\mathbb{Z}\left\langle H\right\rangle$, where $H$ is the hyperplane class.
\end{proof}

\stepcounter{thm}
\subsection{Linear subspaces of dimension $g$ in $\Sigma_{2g-1}$}\label{gSeg}
In this section we will study a particular configuration of $g$-planes contained in $\Sigma_{2g-1}$, and then we will exploit this configuration to compute the symmetries of $\Sigma_{2g-1}$.

\begin{say}\label{gplanes}
Let $H_{I} = H_{i_1,\dots,i_{g+1}}$ be the $g$-plane in $\mathbb{P}^{2g-1}$, linear span of the points $p_{i_1},\dots,p_{i_{g+1}}$, and let $\mathcal{L}_{2g-1|H_{I}}$ be the restriction to $H_I$ of the linear system $\mathcal{L}_{2g-1}$ inducing $\sigma_g$. Then $\mathcal{L}_{2g-1|H_I}$ is the linear system of degree $g$ hypersurfaces in $H_I\cong\mathbb{P}^g$ having multiplicity $g-1$ at $p_{i_1},\dots,p_{i_{g+1}}$. This means that $\sigma_{g|H_I}$ is the standard Cremona transformation of $\mathbb{P}^g$. Therefore, $\sigma_g(H_I)$ is a $g$-plane in $\Sigma_{2g-1}$ passing through the singular points given by the contractions of the $(g-1)$-planes generated by subsets of cardinality $g$ of $\{p_{i_1},\dots,p_{i_{g+1}}\}$. Now let $\Pi_{I^c}$ be the $(g-1)$-plane generated by the points in $\{p_1,\dots,p_{2g+1}\}\setminus\{p_{i_1},\dots,p_{i_{g+1}}\}$. Note that $H_{I^c}$ intersects $H_{I}$ in one point, hence $\sigma_g(H_I)$ passes through the singular point $\sigma_g(H_{I^c})$ as well. We conclude that there are $g+1+1 = g+2$ singular points of $\Sigma_{2g-1}$ lying on the $g$-plane $\sigma_g(H_I)$.

Now, let $E^{g-2}_I\subset X^{2g-1}_{g-2}$ be the exceptional divisor over the $(g-2)$-plane $H^{g-2}_I$ generated by the $p_i$ for $i\in I$. Then $E^{g-2}_I$ is a $\mathbb{P}^g$-bundle over the strict transform of $H^{g-2}_I$ in $X^{2g-1}_{g-3}$. For any $j\notin I$ let $H^{g-1}_{I\cup\{j\}}$ be the $(g-1)$-plane generated by the $p_i$ for $i\in I$ and $p_j$. Note that the strict transform of $H^{g-1}_{I\cup\{j\}}$ intersects $E^{g-2}_I$ along a section $s$ which is mapped by the blow-up morphism isomorphically onto $H^{g-2}_I$. Since the strict transform of $H^{g-1}_{I\cup\{j\}}$ is contracted to a point by $\tilde{\sigma}_g$, the section $s$ must be contracted to a point as well. Therefore, $\tilde{\sigma}_g(E_I^{g-1})$ is a $g$-plane passing through $g+2$ singular points of $\Sigma_{2g-1}$. 

So far we have found $\binom{2g+1}{g+1}+\binom{2g+1}{g-1}$ linear spaces of dimension $g$ in $\Sigma_{2g-1}$, and each of them contains at least $g+2$ of the $\binom{2g+1}{g}$ singular points of $\Sigma_{2g-1}$. We will divide the $g$-planes inside $\Sigma_{2g-1}$ passing through a singular point $p\in \Sigma_{2g-1}$ into two families according to their mutual intersection.
\end{say}

\begin{say}\label{families}
The singular locus of $\Sigma_{2g-1}$ consists of $\binom{2g+1}{g}$ points corresponding to the $\binom{2g+1}{g}$ linear subspaces $\left\langle p_{i_1},\dots,p_{i_g}\right\rangle$ with $\{i_1,\dots,i_g\}\subset\{1,\dots,2g+1\}$ contracted by $\sigma_{g}$.

Now, let $p\in \Sigma_{2g-1}$ be a singular point. So far, we found $2g+2$ linear subspaces of dimension $g$ in $\Sigma_{2g-1}$ passing through $p\in \Sigma_{2g-1}$. These $g$-planes may be divided in two families: 
$$\textbf{A}_p=\{\alpha_1,\dots,\alpha_{g+1}\}, \: \textbf{B}_p=\{\beta_1,\dots,\beta_{g+1}\}$$
with the following properties:
\begin{itemize}
\item[-] $\alpha_i\cap\alpha_j = \beta_i\cap\beta_j = \{p\}$ for any $i,j=1,\dots,g+1$,
\item[-] $\alpha_i\cap\beta_j =\left\langle p,q_{ij}\right\rangle$ for any $i,j=1,\dots,g+1$, where $q_{ij}\in \Sigma_{2g-1} $ is a singular point $q_{ij}\neq p$.
\end{itemize}
Note that $p = \widetilde{\sigma}_g(H_I)$ with $I = \{i_1,\dots,i_g\}$, $\textbf{A}_p$ is the set of $\widetilde{\sigma}_g(E_J^{g-1})$ with $J\subset I$ and $|J| = g-1$ plus $\widetilde{\sigma}_g(H_{I^c})$, $\textbf{B}_p$ is the set of $\sigma_g(H_{I\cup\{j\}})$ with $j\notin I$. The configuration is summarized in the following picture:
$$
\begin{tikzpicture}[line cap=round,line join=round,>=triangle 45,x=0.5cm,y=0.5cm]
\clip(-3.5,-1.2) rectangle (4.1,6.1);
\draw (-2.,6.)-- (-2.,-1.);
\draw (-1.,6.)-- (-1.,-1.);
\draw (1.,6.)-- (1.,-1.);
\draw (3.,6.)-- (3.,-1.);
\draw (-3.,5.)-- (4.,5.);
\draw (-3.,4.)-- (4.,4.);
\draw (-3.,2.)-- (4.,2.);
\draw (-3.,0.)-- (4.,0.);
\begin{scriptsize}
\draw [fill=black] (-2.,-1.) circle (1.5pt);
\draw[color=black] (-1.73,-0.6291030050528003) node {$\scriptstyle{\beta_{1}}$};
\draw [fill=black] (-1.,-1.) circle (1.5pt);
\draw[color=black] (-0.73,-0.6291030050528003) node {$\scriptstyle{\beta_{2}}$};
\draw [fill=black] (1.,-1.) circle (1.5pt);
\draw[color=black] (1.3,-0.6291030050528003) node {$\scriptstyle{\beta_{j}}$};
\draw [fill=black] (3.,-1.) circle (1.5pt);
\draw[color=black] (3.6,-0.6291030050528003) node {$\scriptstyle{\beta_{g+1}}$};
\draw [fill=black] (-3.,5.) circle (1.5pt);
\draw[color=black] (-2.8094230554685238,5.374383650864099) node {$\scriptstyle{\alpha_{1}}$};
\draw [fill=black] (-3.,4.) circle (1.5pt);
\draw[color=black] (-2.8094230554685238,4.3780603334991675) node {$\scriptstyle{\alpha_{2}}$};
\draw [fill=black] (-3.,2.) circle (1.5pt);
\draw[color=black] (-2.8094230554685238,2.3598669470419966) node {$\scriptstyle{\alpha_{i}}$};
\draw [fill=black] (-3.,0.) circle (1.5pt);
\draw[color=black] (-2.8094230554685238,0.3672203123121321) node {$\scriptstyle{\alpha_{g+1}}$};
\draw [fill=black] (1.,2.) circle (1.5pt);
\draw[color=black] (1.4,2.3598669470419966) node {$\scriptstyle{q_{ij}}$};
\end{scriptsize}
\end{tikzpicture}
$$
where the black dots should all be interpreted as representing the singular point $p\in\Sigma_{2g-1}$.
\end{say}

\begin{say}\label{basis}
Let $R = \{I\subset \{1,\dots,2g\} \: |\: |I|=g\}$, $S = \{J\subset \{1,\dots,2g\} \: |\: |J|=g-2\}$, and $x_{I} = x_{i_1}\dots x_{i_g}$ where $I = \{i_1,\dots,i_g\}$, and the $x_i$ are homogeneous coordinates on $\mathbb{P}^{2g-1}$. By \cite[Theorem 4.1]{Ku00} we have that
\stepcounter{thm}
\begin{equation}\label{expkum}
H^{0}(\mathbb{P}^{2g-1},\mathcal{L}_{2g-1}) = \left\lbrace\sum_{I\in R}a_Ix_I \: |\: \sum_{J\subset I\in R}a_I=0 \: \forall \: J\in S\right\rbrace
\end{equation}
By (\ref{expkum}) we have $h^0(\mathbb{P}^{2g-1},\mathcal{L}_{2g-1}) = \binom{2g}{g}-\binom{2g}{g-2}$. Now, set $N= \binom{2g}{g}-\binom{2g}{g-2}-1$, and consider the expressions $s_i = \sum_{I\in R}a_I^i x_I$ for $i=0,\dots,N$. Let $H_1,\dots,H_{N+2}$ be $(g-1)$-planes in $\mathbb{P}^{2g-1}$ generated by subsets of cardinality $g$ of $\{[1:0:\dots:0],\dots,[0:\dots:0:1]\}\subset\mathbb{P}^{2g-1}$. Note that imposing $[s_1(H_i):\dots:s_N(H_i)] = [0:\dots:0:1:0:\dots:0]$, with the non-zero entry in the $i$-th position, for $i =1,\dots,N+1$ we get $N(N+1)$ equations. Furthermore, by setting $[s_1(H_{N+2}):\dots:s_N(H_{N+2})]$ equal to $[1:\dots:1]$ we get $N$ more equations. Recall that by (\ref{expkum}) for each $i=0,\dots,N$ there are $\binom{2g}{g-2}$ relations among the $a_I^i$. Therefore, we get $\binom{2g}{g-2}(N+1)$ more constraints. Summing up we have a linear system of $N(N+1)+N+\binom{2g}{g-2}(N+1)$ homogeneous equations in the $\binom{2g}{g}(N+1)$ indeterminates $a_{I}^{i}$. Note that
$$\binom{2g}{g}(N+1)-\left(N(N+1)+N+\binom{2g}{g-2}(N+1)\right) = 1$$ 
hence there exists a non-trivial solution. Let $s_0,\dots,s_N$ be the sections of $H^{0}(\mathbb{P}^{2g-1},\mathcal{L}_{2g-1})$ associated to such a solution. These sections yield an explicit realization of the map $\sigma_g:\mathbb{P}^{2g-1}\dasharrow \Sigma_{2g-1}\subset \mathbb{P}(H^{0}(\mathbb{P}^{2g-1},\mathcal{L}_{2g-1})^{*}) = \mathbb{P}^N$, $\sigma_g(x)= [s_0(x):\dots:s_N(x)]$. By construction and by the description of the singular locus of $\Sigma_{2g-1}$ in \ref{families}, we see that, with respect to this expression for $\sigma_g$, the points $[1:0:\dots:0],\dots,[0:\dots:0:1],[1:\dots:1]\in \mathbb{P}^N$ are singular points of $\Sigma_{2g-1}$. Hence, there are $N+2$ singular points of $\Sigma_{2g-1}$ that are in linear general position.
\end{say}

\stepcounter{thm}
\subsection{The even dimensional case}\label{evenio} 
In this section we investigate the geometry of the even counterpart of $\Sigma_{2g-1}$, that is the GIT quotient $(\mathbb{P}^1)^n(b)\quot PGL(2)$ with $b_i = 1$ for $i=1,\dots,n$ and $n$ odd. The quotient here has even dimension $2g$, for a positive integer $g$, and hence we will denote it by $\Sigma_{2g}$. We have $n=2g+3$. Note that in this case all the semistable points are indeed stable, and then $\Sigma_{2g}$ is smooth. 

\begin{Proposition}\label{resodd}
Let $X^{2g}_{g-1}$ be the variety obtained at the step $g$ of Construction \ref{kblusym}. Then there exists a morphism $\widetilde{\mu}_g:X^{2g}_{g-1}\rightarrow X(b)\quot PGL(2)$ making the following diagram  
\[
  \begin{tikzpicture}[xscale=3.2,yscale=-1.5]
    \node (A0_0) at (0, 0) {$X^{2g}_{g-1}$};
    \node (A1_0) at (0, 1) {$\mathbb{P}^{2g}$};
    \node (A1_1) at (1, 1) {$\Sigma_{2g}\subset\mathbb{P}^N$};
    \path (A0_0) edge [->,swap]node [auto] {$\scriptstyle{f}$} (A1_0);
    \path (A1_0) edge [->,dashed]node [auto] {$\scriptstyle{\mu_g}$} (A1_1);
    \path (A0_0) edge [->]node [auto] {$\scriptstyle{\widetilde{\mu}_g}$} (A1_1);
  \end{tikzpicture}
  \]
commute, where $f:X^{2g}_{g-1}\rightarrow\mathbb{P}^{2g}$ is the blow-up morphism and $\mu_g$ is the rational map in (\ref{mapeven}).
\end{Proposition}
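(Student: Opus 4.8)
The plan is to reproduce, \emph{mutatis mutandis}, the proof of Proposition \ref{resseg}. Three inputs are needed: an interpretation of the iterated blow-up $X^{2g}_{g-1}$ as a Hassett moduli space of weighted pointed rational curves via Construction \ref{kblusym}; an interpretation of the GIT quotient $\Sigma_{2g}$ as such a moduli space via Remark \ref{GITHas}; and the commutative triangle of Proposition \ref{res}, which identifies the rational map $\mu_g = \phi_{\mathcal{L}_{2g}}$ with the composite of Kapranov's birational morphism $f_n \colon \cM_{0,2g+3} \to \mathbb{P}^{2g}$ and a reduction morphism onto $\Sigma_{2g}$. The morphism $\widetilde{\mu}_g$ will be produced as a reduction morphism, hence automatically a genuine morphism of projective varieties, so the real content lies in matching up the weight data.

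First I would identify the two Hassett spaces. For $n = 2g+3$ the reference configuration of Construction \ref{kblusym} consists of $2g+2$ general points of $\mathbb{P}^{2g}$, and step $g$ --- in which one blows up the strict transforms of all linear spans of at most $g$ of them --- realizes $X^{2g}_{g-1}$ as $\cM_{0,A[2g+3]}$ with $A[2g+3] = \bigl(\tfrac{1}{g+1}, \dots, \tfrac{1}{g+1}, 1\bigr)$ ($2g+2$ entries equal to $\tfrac{1}{g+1}$ and one entry $1$). On the other side, $\Sigma_{2g} = (\mathbb{P}^1)^{2g+3} \quot PGL(2)$ carries the symmetric polarization $b = (1,\dots,1)$, with $|b| = 2g+3$; since every semistable point is stable here, Remark \ref{GITHas} identifies $\Sigma_{2g}$ with the smooth moduli space $\cM_{0,\widetilde{A}[2g+3]}$, where $\widetilde{A}[2g+3] = \bigl(\tfrac{2}{2g+3}, \dots, \tfrac{2}{2g+3}\bigr)$. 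Since $\tfrac{1}{g+1} > \tfrac{2}{2g+3}$ and $1 > \tfrac{2}{2g+3}$, one has $A[2g+3] \geq \widetilde{A}[2g+3]$ coordinatewise, so Remark \ref{redwst} yields a reduction morphism $\widetilde{\mu}_g := \rho_{\widetilde{A}[2g+3], A[2g+3]} \colon X^{2g}_{g-1} \to \Sigma_{2g}$ obtained simply by lowering the weights.

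Finally I would prove the commutativity. Applying Proposition \ref{res} with $n = 2g+3$ and $b = (1,\dots,1)$ gives $\mu_g \circ f_n = \phi_{\mathcal{L}_{2g}} \circ f_n = \rho$, the reduction morphism $\cM_{0,2g+3} \to \Sigma_{2g}$. Factor $f_n$ according to Construction \ref{kblusym}: let $\pi \colon \cM_{0,2g+3} \to X^{2g}_{g-1}$ be the composition of the remaining blow-downs (steps $g+1, \dots, 2g-1$), which is also the reduction morphism from the weights $(1,\dots,1)$ of $\cM_{0,2g+3}$ down to $A[2g+3]$, and let $f \colon X^{2g}_{g-1} \to \mathbb{P}^{2g}$ be the blow-up morphism of the statement, so that $f_n = f \circ \pi$. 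Since reduction morphisms compose, $\rho = \widetilde{\mu}_g \circ \pi$. Combining, $\mu_g \circ f \circ \pi = \rho = \widetilde{\mu}_g \circ \pi$; as $\pi$ is surjective, the rational maps $\mu_g \circ f$ and $\widetilde{\mu}_g$ agree on a dense open subset of $X^{2g}_{g-1}$, hence coincide, and since $\widetilde{\mu}_g$ is a morphism so is $\mu_g \circ f$. This is exactly the asserted commutativity, and it shows in passing that $f$ resolves $\mu_g$. The only delicate point I foresee is the bookkeeping of the middle paragraph --- reading off $A[2g+3]$ from Construction \ref{kblusym} at step $g$ and the value $\tfrac{2}{2g+3}$ from Remark \ref{GITHas}, and checking that $\widetilde{\mu}_g$ genuinely contracts the exceptional divisors over the positive-dimensional blown-up spans; smoothness of $\Sigma_{2g}$ actually makes this case cleaner than the odd one, where singular boundary strata had to be tracked, so I do not expect a genuine obstacle.
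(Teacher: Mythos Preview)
Your proposal is correct and follows essentially the same argument as the paper: identify $X^{2g}_{g-1}\cong\cM_{0,A[2g+3]}$ with $A[2g+3]=\bigl(\tfrac{1}{g+1},\dots,\tfrac{1}{g+1},1\bigr)$ (the paper writes the equivalent $\tfrac{2}{2g+2}$), identify $\Sigma_{2g}\cong\cM_{0,\widetilde{A}[2g+3]}$ with $\widetilde{A}[2g+3]=\bigl(\tfrac{2}{2g+3},\dots,\tfrac{2}{2g+3}\bigr)$, and take $\widetilde{\mu}_g$ to be the reduction morphism. Your explicit justification of commutativity via Proposition~\ref{res} and the factoring $f_n=f\circ\pi$ through $\cM_{0,2g+3}$ spells out what the paper abbreviates as ``argue as in the proof of Proposition~\ref{resseg}''.
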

\begin{proof}
By Construction \ref{kblusym} $X^{2g}_{g-1}\cong\cM_{0,A[2g+3]}$ with $A[2g+3] =  \left(\frac{2}{2g+2},\dots,\frac{2}{2g+2},1\right)$, and by Remark \ref{GITHas} $\Sigma_{2g}\cong \cM_{0,\widetilde{A}[2g+3]}$ with $\widetilde{A}[2g+3]=\left(\frac{2}{2g+3},\dots,\frac{2}{2g+3}\right)$. Therefore we may take $\widetilde{\mu}_g = \rho_{\widetilde{A}[2g+3],A[2g+3]}:\cM_{0,A[2g+3]}\rightarrow\cM_{0,\widetilde{A}[2g+3]}$ and argue as in the proof of Proposition \ref{resseg}.
\end{proof}

\stepcounter{thm}
\subsection{Linear subspaces of dimension $g$ in $\Sigma_{2g}\subset\mathbb{P}^N$}\label{linsubsodd} 
Let $H_I = H_{i_1,\dots,i_{g+1}}$ be the $g$-plane generated by $p_{i_1},\dots,p_{i_{g+1}}\in \mathbb{P}^{2g}$. The linear system $\mathcal{L}_{2g|H_I}$ is given by the hypersurfaces of degree $2g+1$ in $H_I\cong \mathbb{P}^g$ with multiplicity $2g-1$ at $p_{i_1},\dots,p_{i_{g+1}}$. Now, let $H_J\subset H_I$ be a $(g-1)$-plane generated by the points indexed by a subset $J\subset I$ with $|J| = g$. Then the general element $D$ of $\mathcal{L}_{2g|H_I}$ must contain $H_J$ with multiplicity $g(2g-1)-(g-1)(2g+1) = 1$. This means that the divisor $D$ equals $\bigcup_{\{J\subset I,\: | \: |J|=g\}}H_J$. Note that $\deg(D) = g+1$ and $\mult_{p_{i_j}}D = g$ for any $i_j\in I$. Therefore, $\mathcal{L}_{2g|H_I}$ is the linear system of hypersurfaces of degree $2g+1-(g+1)=g$ in $H_I\cong\mathbb{P}^g$ having multiplicity $2g-1-g = g-1$ at $p_{i_j}$ for any $i_j\in I$. This is the linear system of the standard Cremona transformation of $\mathbb{P}^g$. Therefore, $\mu_{g|H_I}(H_I)\subset \Sigma_{2g}\subset\mathbb{P}^N$ is a linear subspace of dimension $g$.

Now, let $E_I^{g-1}$ be the exceptional divisor over the strict transform of a $(g-1)$-plane of $\mathbb{P}^{2g}$ generated by $g$ of the $p_i$. Note that the reduction morphism $\widetilde{\mu}_g:X^{2g}_{g-1}\cong \cM_{0,A[2g+3]}\rightarrow \Sigma_{2g}\cong \cM_{0,\widetilde{A}[2g+3]}$ in Proposition \ref{resodd} contracts $E_I^{g-1}$ to a $g$-plane $\widetilde{\mu}_g(E_I^{g-1})\subset \Sigma_{2g}\subset\mathbb{P}^N$.

\begin{say}\label{confodd}
We found $\binom{2g+2}{g+1}+\binom{2g+2}{g}$ linear subspaces of dimension $g$ in $\Sigma_{2g}\subset\mathbb{P}^N$. We will denote by 
$$\textbf{C}=\{\gamma_1,\dots,\gamma_{c}\}, \: \textbf{D}=\{\delta_1,\dots,\delta_{d}\}$$
where $c = \binom{2g+2}{g+1}$ and $d = \binom{2g+2}{g}$, the families of the $g$-planes coming from the $H_I$ and the $E_I^{g-1}$ respectively. Note that:
\begin{itemize}
\item[-] on any $\delta_i$ we have $g+2$ distinguished points determined by the intersections with the $g-1$ of the $\gamma_j$ coming from $g$-planes in $\mathbb{P}^{2g}$ containing the $(g-1)$-plane associated to $\delta_i$,
\item[-] on any $\gamma_i$, say coming from $H_I$, we have $g+2$ distinguished points as well: $g+1$ of them coming from the exceptional divisors $E_J^{g-1}$ with $J\subset I$, $|J|=g$, and another one determined as the image of the point $H_I\cap H_{I^c}$, where $I_{c} =\{1,\dots,2g+2\}\setminus I$. 
\end{itemize}
Note that the $g+2$ distinguished points on the $\gamma_i$ and the $\delta_i$ are the same and come from intersecting $E_J^{g-1}$ with $H_I$ for $I = H\cup\{i\}$, $i\notin J$. These $g+2$ distinguished points are in linear general position. This is clear for the $\gamma_i$. In order to see that it is true for the $\delta_j$ as well, notice that we may map any $\delta_j$ to any $\gamma_i$ just by acting with a suitable permutation in $S_{2g+3}$ involving the standard Cremona transformation as in Remark \ref{cremodd}.

Finally note that on any $\gamma_i$ we have $g+2$ distinguished points and one of them is the intersection point of $\gamma_i$ with a $\gamma_j$. We call such a $\gamma_j$ the complementary of $\gamma_i$, and we denote it by $\gamma_j = \gamma_{i^c}$. Therefore $\gamma_i$ and $\gamma_{i^c}$ determine $2(g+2)-1$ distinguished points for any $i = 1,\dots,\frac{1}{2}\binom{2g+2}{g+1}$. Summing up we have 
\stepcounter{thm}
\begin{equation}\label{distpoints}
\frac{1}{2}\binom{2g+2}{g+1}(2(g+2)-1)= \frac{(2g+3)!}{2((g+1)!)^2}
\end{equation}
distinguished points in the configuration of $g$-planes $\textbf{C}\cup \textbf{D}$.
\end{say}

\begin{Remark}\label{genposodd}
Arguing as in \ref{basis} and using the description of the sections of $\mathcal{L}_{2g}$ in \cite[Section 3.3]{Ku03} we get that in $\Sigma_{2g}\subset \mathbb{P}(H^0(\mathbb{P}^{2g},\mathcal{L}_{2g})^{*}) = \mathbb{P}^N$ there are $N+2$ of the distinguished points described in \ref{confodd} that are in linear general position in $\mathbb{P}^N$.
\end{Remark}

\begin{say}\label{small}
Let us consider the following diagram:
\[
  \begin{tikzpicture}[xscale=1.5,yscale=-1.2]
    \node (A0_1) at (1, 0) {$X^{2g}_{g-1}$};
    \node (A1_0) at (0, 1) {$X^{2g}_1$};
    \node (A1_2) at (2, 1) {$\Sigma_{2g}$};
    \path (A0_1) edge [->] node [auto] {$\scriptstyle{\widetilde{\mu}_g}$} (A1_2);
    \path (A0_1) edge [->,swap] node [auto] {$\scriptstyle{h}$} (A1_0);
    \path (A1_0) edge [->,dashed] node [auto] {$\scriptstyle{\psi}$} (A1_2);
  \end{tikzpicture}
  \]
where $h:X^{2g}_{g-1}\rightarrow X^{2g}_1$ is the composition of blow-ups in Construction \ref{kblusym}. Note that, by interpreting the varieties appearing in the diagram as moduli spaces of weighted pointed curves, and $h$ and $\widetilde{\mu}_g$ as reduction morphisms as in the proof of Proposition \ref{resodd}, we see that the rational map $\psi:X^{2g}_1\dasharrow \Sigma_{2g}$ is a composition of flips of strict transforms of linear subspaces generated by subsets of $\{p_1,\dots,p_{2g+2}\}$ up to dimension $g-1$. In particular, $\psi$ is an isomorphism in codimension one and $\Pic(\Sigma_{2g})\cong \mathbb{Z}^{2g+3}$ is the free abelian group generated by the strict transforms via $\psi$ of $H,E_1,\dots,E_{2g+2}$. 
\end{say}

\begin{Lemma}\label{eff}
The effective cone $\Eff(X^{2g}_1)\subset \mathbb{R}^{2g+3}$ of $X^{2g}_1$ is the polyhedral cone generated by the classes of the exceptional divisors $E_i$ and of the strict transforms $H-\sum_{i\in I}E_i$, $I\subset\{1,\dots,2g+2\}$, with $|I| = 2g$, of the hyperplanes generated by $2g$ of the $p_i$.
\end{Lemma}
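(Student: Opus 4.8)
The plan is the following. Write $X := X^{2g}_1 = \Bl_{p_1,\dots,p_{2g+2}}\mathbb{P}^{2g}$, so that $\Pic(X)_{\mathbb{R}}$ is freely generated by the pull-back $H$ of the hyperplane class and by the exceptional divisors $E_1,\dots,E_{2g+2}$. One inclusion is immediate: each $E_i$ is the class of an effective divisor, and for $|I|=2g$ the class $H-\sum_{i\in I}E_i$ is the class of the strict transform of the unique hyperplane of $\mathbb{P}^{2g}$ spanned by the $2g$ general points $\{p_i:i\in I\}$; hence the polyhedral cone generated by these classes is contained in $\Eff(X)$, and it remains to prove the reverse inclusion.

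For the reverse inclusion I would first observe that $X$ is a Mori dream space — for instance because, by \ref{small}, it is a small modification of the Mori dream space $\Sigma_{2g}$ — so that $\Eff(X)$ is rational polyhedral and each of its extremal rays is spanned by the class of a rigid irreducible effective divisor. I would then exploit the $S_{2g+3}$-action of Remark \ref{cremodd}: each standard Cremona transformation $\psi$ of $\mathbb{P}^{2g}$ based at $2g+1$ of the $2g+2$ points induces a birational self-map $\widetilde{\psi}$ of $X$ which is an isomorphism in codimension one (a composition of flops, since the remaining point is general and the indeterminacy locus of $\psi$ has codimension two), so $\widetilde{\psi}_{*}$ preserves $\Eff(X)$ and sends rigid irreducible divisors to rigid irreducible divisors. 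A direct computation with Lemma \ref{founCrem} — the one already used in Proposition \ref{autcrem} — shows that these maps, together with the permutations of the $E_i$'s, act transitively on the set $\mathcal{G}=\{E_i\}\cup\{H-\sum_{i\in I}E_i:|I|=2g\}$; for instance $\widetilde{\psi}_{*}$ exchanges $E_j$ with $H-\sum_{i\in\{1,\dots,2g+1\}\setminus\{j\}}E_i$. Thus it suffices to exhibit, in each $S_{2g+3}$-orbit of extremal rays of $\Eff(X)$, a representative lying in $\mathcal{G}$.

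The core is then a Cremona reduction on the degree. Let $D$ be a rigid irreducible effective divisor with $[D]=dH-\sum_i m_i E_i$. Restricting a defining section of $D$ to $E_i\cong\mathbb{P}^{2g-1}$ gives $m_i\geq 0$, and $d\geq 0$, with $d=0$ forcing $D=E_j$ for some $j$. If $d=1$ then $D$ is the strict transform of a hyperplane through the points $p_i$ with $m_i=1$; since $2g+1$ general points of $\mathbb{P}^{2g}$ lie on no hyperplane while fewer than $2g$ points lie on a positive-dimensional family of hyperplanes, rigidity forces this hyperplane to pass through exactly $2g$ of the $p_i$, hence $[D]\in\mathcal{G}$. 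If $d\geq 2$, Lemma \ref{founCrem} shows that the Cremona $\psi$ based at $\{p_i:i\neq k\}$ sends $[D]$ to the class of a divisor of degree $2g\,d-\sum_{i\neq k}m_i$; choosing $k$ with $m_k$ minimal, this degree is strictly smaller than $d$ as soon as $\sum_i m_i-\min_i m_i>(2g-1)d$. Granting this, $\widetilde{\psi}_{*}D$ is again rigid and irreducible of strictly smaller degree, and induction on $d$ reduces us to the cases $d\leq 1$ treated above.

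The main obstacle is exactly the inequality invoked in the last step: a Cremona-reduced irreducible effective divisor (one with $\sum_i m_i-\min_i m_i\leq(2g-1)d$) of degree $d\geq 2$ cannot be rigid. I would establish this through a dimension estimate for the linear system $\mathcal{L}_{2g,d}(m_1,\dots,m_{2g+2})$ of degree-$d$ hypersurfaces of $\mathbb{P}^{2g}$ with the prescribed multiplicities at the $2g+2$ general points, showing that over the Cremona-reduced range this system is non-special of positive dimension; the results of Brambilla--Dumitrescu--Postinghel \cite{BDP16} used in Section \ref{degrees} are precisely of this type. Controlling the possible speciality of these systems is the delicate point; alternatively one may simply quote the known description of $\Eff\big(\Bl_{n+2}\mathbb{P}^n\big)$ as a single orbit of the associated Weyl group (compare \cite{AM16}, \cite{MS16}), which gives the statement at once. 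Once the crux is in place the induction terminates at $d\leq 1$, every extremal ray of $\Eff(X^{2g}_1)$ lies in the cone generated by the $E_i$ and the $H-\sum_{i\in I}E_i$ with $|I|=2g$, and the claimed equality of cones follows.
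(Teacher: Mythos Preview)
The paper's proof is a two-line citation: it quotes \cite[Lemma~4.24]{CT06} and \cite[Corollary~2.5]{BDP16} for the facets of $\Eff(X^{2g}_1)$ and then says the extremal rays are obtained by intersecting those facets. Your approach --- a hands-on Cremona reduction exploiting the pseudo $S_{2g+3}$-action --- is genuinely different and in principle workable, but two points deserve attention.

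First, the assertion that every extremal ray of the effective cone of a Mori dream space is spanned by a \emph{rigid} irreducible divisor is false in general (on $\mathbb{P}^1\times\mathbb{P}^1$ the two rulings are extremal in $\Eff$ yet move in pencils). What you really need is that no extremal ray of $\Eff(X^{2g}_1)$ lies in $\Mov(X^{2g}_1)$; this happens to be true here, but the natural justification is Theorem~\ref{thmcd}, which comes \emph{after} Lemma~\ref{eff} and invokes it. You can partially sidestep this by running the reduction on an arbitrary irreducible effective $D$ and observing at $d=1$ that $H-\sum_{j\in J}E_j$ with $|J|<2g$ already lies in the cone of $\mathcal{G}$ (write it as $(H-\sum_{i\in I}E_i)+\sum_{i\in I\setminus J}E_i$ for any $I\supset J$ with $|I|=2g$); but then rigidity is no longer available to force the key inequality in the inductive step.

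Second, the crux you yourself isolate --- that a Cremona-reduced irreducible class of degree $d\geq 2$ cannot be extremal --- is only sketched. Option~(a), checking via \cite{BDP16} that such linear systems have $h^0>1$, is reasonable but not carried out, and ``not rigid'' still does not yield ``not extremal'' without the first point above. Option~(b), quoting the Weyl-group-orbit description of $\Eff(\Bl_{n+2}\mathbb{P}^n)$ from \cite{AM16} or \cite{MS16}, is circular: that description \emph{is} the lemma. By contrast the paper's route through the facets --- which are cut out by moving-curve classes and hence manifestly pair non-negatively with every effective divisor --- delivers the reverse inclusion with no induction and no case analysis.
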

\begin{proof}
The faces of $\Eff(X^{2g}_1)$ are described in \cite[Lemma 4.24]{CT06}, and in \cite[Corollary 2.5]{BDP16}. It is straightforward to compute the extremal rays of $\Eff(X^{2g}_1)$ by intersecting its faces.
\end{proof}

\begin{Lemma}\label{canodd}
The canonical sheaf of $\Sigma_{2g}\subset\mathbb{P}^N$ is isomorphic to $\mathcal{O}_{\Sigma_{2g}}(-1)$.
\end{Lemma}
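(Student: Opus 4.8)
The plan is to mimic the argument used in Lemma \ref{can} for the odd dimensional case, transporting the canonical class computation from a blow-up of $\mathbb{P}^{2g}$ through the birational map induced by $\mathcal{L}_{2g}$. First I would consider the blow-up $f: X^{2g}_1 \to \mathbb{P}^{2g}$ of $\mathbb{P}^{2g}$ at the $2g+2$ general points $p_1,\dots,p_{2g+2}$, with $H$ the pull-back of the hyperplane class and $E_1,\dots,E_{2g+2}$ the exceptional divisors. By Proposition \ref{resodd} (composing with the map $h$ of \ref{small}) the linear system $\mathcal{L}_{2g}$ induces a birational map $g_g: X^{2g}_1 \dasharrow \Sigma_{2g}$, which by \ref{small} is a composition of flips and hence is an isomorphism in codimension one; in particular it contracts no divisor. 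Thus Lemma \ref{teclemma} applies and gives $K_{X^{2g}_1} \sim (g_g)^{-1}_* K_{\Sigma_{2g}}$.

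Next I would identify the pull-back of the hyperplane class of $\Sigma_{2g}\subset\mathbb{P}^N$. Since $\mu_g$ (equivalently $g_g$) is induced by $\mathcal{L}_{2g} = \mathcal{L}_{2g,2g+1}(2g-1,\dots,2g-1)$, the strict transform in $X^{2g}_1$ of a general member of this linear system represents $(g_g)^*\mathcal{O}_{\Sigma_{2g}}(1)$, namely the class
\[
(g_g)^*\mathcal{O}_{\Sigma_{2g}}(1) \sim (2g+1)H - (2g-1)\sum_{i=1}^{2g+2}E_i.
\]
On the other hand the anticanonical class of the blow-up of $\mathbb{P}^{2g}$ at $2g+2$ points is
\[
-K_{X^{2g}_1} \sim (2g+1)H - (2g-1)\sum_{i=1}^{2g+2}E_i,
\]
since $-K_{\mathbb{P}^{2g}} = (2g+1)H$ and each blow-up of a point in a $2g$-dimensional variety contributes $-(2g-1)E_i$. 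These two classes coincide, so $-K_{X^{2g}_1} \sim (g_g)^*\mathcal{O}_{\Sigma_{2g}}(1)$.

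Finally I would combine the two observations: applying Lemma \ref{teclemma} to $g_g$ and using that $g_g$ does not contract any divisor, the identity $-K_{X^{2g}_1} \sim (g_g)^*\mathcal{O}_{\Sigma_{2g}}(1)$ descends to $-K_{\Sigma_{2g}} \sim \mathcal{O}_{\Sigma_{2g}}(1)$, i.e. $K_{\Sigma_{2g}} \cong \mathcal{O}_{\Sigma_{2g}}(-1)$. The main point to be careful about — the analogue of the subtlety flagged in Lemma \ref{can} — is to make sure that $g_g$ genuinely contracts no divisor, so that Lemma \ref{teclemma} is applicable; this is exactly the content of \ref{small}, where $\psi$ is described as a composition of flips of strict transforms of linear subspaces of dimension at most $g-1$, all of which have codimension at least two in $X^{2g}_1$ (since $\dim X^{2g}_1 = 2g$), hence no divisor is contracted. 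With that in hand the numerical comparison of classes is routine and the statement follows.
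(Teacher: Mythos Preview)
Your proposal is correct and follows essentially the same approach as the paper: both pass through the blow-up $X^{2g}_1$, compute $-K_{X^{2g}_1}\sim(2g+1)H-(2g-1)\sum_iE_i$, observe this is the class of $\mathcal{L}_{2g}$, and transfer the identity to $\Sigma_{2g}$ via Lemma~\ref{teclemma} using that the induced birational map is an isomorphism in codimension one. You are in fact more explicit than the paper in justifying the last point by invoking \ref{small}, whereas the paper simply says ``argue as in the proof of Lemma~\ref{can}''.
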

\begin{proof}
Let $X^{2g}_1$ be the blow-up of $\mathbb{P}^{2g}$ at $2g+2$ general points $p_1,\dots,p_{2g+2}$, and let $h_g:X^{2g}_1\dasharrow \Sigma_{2g}$ be the birational map induced by the morphism $\widetilde{\mu}_g:X^{2g}_{g-1}\rightarrow \Sigma_{2g}$ in Proposition \ref{resodd}. As usual we will denote by $H$ the pull-back to $X_1^{2g}$ of the hyperplane section of $\mathbb{P}^{2g}$, and by $E_{1},\dots,E_{2g+2}$ the exceptional divisors. In order to conclude, it is enough to note that
$$-K_{X^{2g}_1}\sim (2g+1)H-(2g-1)\sum_{i=1}^{2g+2}E_i$$ 
is an element of the linear system $\mathcal{L}_{2g}$ inducing $\mu_g:\mathbb{P}^{2g}\dasharrow \Sigma_{2g}$, and to argue as in the proof of Lemma \ref{can}.
\end{proof}

As we have seen in \ref{small}, $\Pic(\Sigma_{2g})\cong \mathbb{Z}^{2g+3}$. In the next section our aim will be to describe the cones of curves and divisors of $\Sigma_{2g}$. 

\stepcounter{thm}
\subsection{Mori Dream Spaces and chamber decomposition}\label{MDS}
Let $X$ be a normal projective variety. 
We denote by $N^1(X)$ the real vector space of $\mathbb{R}$-Cartier divisors modulo numerical equivalence. 
The \emph{nef cone} of $X$ is the closed convex cone $\Nef(X)\subset N^1(X)$ generated by classes of 
nef divisors. 
The \emph{movable cone} of $X$ is the convex cone $\Mov(X)\subset N^1(X)$ generated by classes of 
\emph{movable divisors}. These are Cartier divisors whose stable base locus has codimension at least two in $X$.
The \emph{effective cone} of $X$ is the convex cone $\Eff(X)\subset N^1(X)$ generated by classes of 
\emph{effective divisors}.
We have inclusions:
$$
\Nef(X)\ \subset \ \overline{\Mov(X)}\ \subset \ \overline{\Eff(X)}
$$
We say that a birational map  $f: X \dasharrow X'$ into a normal projective variety $X'$  is a \emph{birational contraction} if its
inverse does not contract any divisor. 
We say that it is a \emph{small $\mathbb{Q}$-factorial modification} 
if $X'$ is $\mathbb{Q}$-factorial  and $f$ is an isomorphism in codimension one.
If  $f: X \dasharrow X'$ is a small $\mathbb{Q}$-factorial modification, then 
the natural pullback map $f^*:N^1(X')\to N^1(X)$ sends $\Mov(X')$ and $\Eff(X')$
isomorphically onto $\Mov(X)$ and $\Eff(X)$, respectively.
In particular, we have $f^*(\Nef(X'))\subset \overline{\Mov(X)}$.

\begin{Definition}\label{def:MDS} 
A normal projective $\mathbb{Q}$-factorial variety $X$ is called a \emph{Mori dream space}
if the following conditions hold:
\begin{enumerate}
\item[-] $\Pic{(X)}$ is finitely generated,
\item[-] $\Nef{(X)}$ is generated by the classes of finitely many semi-ample divisors,
\item[-] there is a finite collection of small $\mathbb{Q}$-factorial modifications
 $f_i: X \dasharrow X_i$, such that each $X_i$ satisfies the second condition above, and 
 $$
 \Mov{(X)} \ = \ \bigcup_i \  f_i^*(\Nef{(X_i)})
 $$
\end{enumerate}
\end{Definition}

The collection of all faces of all cones $f_i^*(\Nef{(X_i)})$ above forms a fan which is supported on $\Mov(X)$.
If two maximal cones of this fan, say $f_i^*(\Nef{(X_i)})$ and $f_j^*(\Nef{(X_j)})$, meet along a facet,
then there exists a commutative diagram: 
  \[
  \begin{tikzpicture}[xscale=1.5,yscale=-1.2]
    \node (A0_0) at (0, 0) {$X_i$};
    \node (A0_2) at (2, 0) {$X_{j}$};
    \node (A1_1) at (1, 1) {$Y$};
    \path (A0_0) edge [->,swap]node [auto] {$\scriptstyle{h_i}$} (A1_1);
    \path (A0_2) edge [->]node [auto] {$\scriptstyle{h_j}$} (A1_1);
    \path (A0_0) edge [->,dashed]node [auto] {$\scriptstyle{\varphi}$}  (A0_2);
  \end{tikzpicture}
  \]
where $Y$ is a normal projective variety, $\varphi$ is a small modification, and $h_i$ and $h_j$ are small birational morphisms
of relative Picard number one. 
The fan structure on $\Mov(X)$ can be extended to a fan supported on $\Eff(X)$ as follows. 

\begin{Definition}\label{MCD}
Let $X$ be a Mori dream space. We describe a fan structure on the effective cone $\Eff(X)$, called the \emph{Mori chamber decomposition}. We refer to \cite[Proposition 1.11]{HK00} and \cite[Section 2.2]{Ok16} for details.
There are finitely many birational contractions from $X$ to Mori dream spaces, denoted by $g_i:X\rightarrow Y_i$.
The set $\Exc(g_i)$ of exceptional prime divisors of $g_i$ has cardinality $\rho(X/Y_i)=\rho(X)-\rho(Y_i)$.
The maximal cones $\mathcal{C}$ of the Mori chamber decomposition of $\Eff(X)$ are of the form:
$$
\mathcal{C}_i \ = \Cone \ \Big(g_i^*\big(\Nef(Y_i)\big),\:\Exc(g_i)\Big)
$$
We call $\mathcal{C}_i$ or its interior $\mathcal{C}_i^{^\circ}$ a \emph{maximal chamber} of $\Eff(X)$.
\end{Definition}

Let $m>1$ be an integer. Let $X_{m+2}^m$ be the blow-up of $\mathbb{P}^m$ at $m+2$ general points. It is well-known that $X_{m+2}^m$ is a Mori Dream Space \cite[Theorem 1.3]{CT06}, \cite[Theorem 1.3]{AM16}. 
In what follows we will describe the cones of divisors of $X_{m+2}^m$, as well as its Mori chamber decomposition. Once this is done, we will concentrate on the case where $m=2g$ in order to use these results to compute the cone of curves of $\Sigma_{2g}$. We will consider the standard bases of $\Pic(X_{m+2}^m)_{\mathbb{R}}\cong\mathbb{R}^{m+3}$ given by the pull-back $H$ of the hyperplane section and the exceptional divisors $E_1,\dots,E_{m+2}$. 

\begin{thm}\label{thmcd}
Let $X_{m+2}^m$ be the blow-up of $\mathbb{P}^m$ at $m+2$ general points, and write the class of a general divisor $D\in \Pic(X_{m+2}^m)_{\mathbb{R}}$ as $D = yH + \sum_{i=1}^{m+2}x_iE_i$. Then
\begin{itemize}
\item[-] The effective cone $\Eff(X_{m+2}^m)$ is defined by 
$$
\left\{
\begin{array}{ll}
 y+x_i\geq 0 & i\in\{1,\dots,m+2\}\\ 
 my+\sum_{i=1}^{m+2}x_i\geq 0 & \\
 y\geq 0 & \\
 my + \sum_{i\in I}x_i\geq 0 & I\subseteq\{1,\dots,m+2\},\: |I| = m+1  
 \end{array}  
\right.
$$
\item[-] The Mori chamber decomposition of $\Eff(X_{m+2}^m)$ is defined by the hyperplane arrangement
\stepcounter{thm}

\begin{equation}\label{mcdeq}
\left\{
\begin{array}{ll}
(2-k)y-\sum_{i\in I}x_i=0 & \text{if}\ I\subseteq \{1,\dots,m+2\}, \: |I|=k-1\\ 
(m-k+1)y-\sum_{i\in I}x_i+\sum_{i=1}^{m+2}x_i=0 & \text{if}\ I\subseteq \{1,\dots,m+2\}, \: |I|=k
\end{array}  
\right.
\end{equation}
with $2\leq k\leq \frac{m+3}{2}$.
\item[-] The movable cone $\Mov(X_{m+2}^m)$ is given by 
$$
\left\{
\begin{array}{ll}
(m-1)y-\sum_{i\in I}x_i+\sum_{i=1}^{m+2}x_i\geq 0 & \text{if}\ I\subseteq \{1,\dots,m+2\}, \: |I|=2\\ 
x_i\leq 0 & i\in \{1,\dots,m+2\}\\
y+x_i\geq 0 & i\in\{1,\dots,m+2\}\\ 
my+\sum_{i=1}^{m+2}x_i\geq 0
\end{array}  
\right.
$$
\item[-] All small $\mathbb{Q}$-factorial modifications of $X$ are smooth. Let $\mathcal{C}$ and $\mathcal{C}'$ be two adjacent chambers of $\Mov(X)$, corresponding 
		to small $\mathbb{Q}$-factorial modifications of $X$, $f:X\dasharrow \widetilde{X}$ and $f':X\dasharrow\widetilde{X}'$, respectively. These chambers are separated by a hyperplane $H_I$ in (\ref{mcdeq}), with $3\leq k\leq \frac{m+3}{2}$ and $|I|\in \{k-1,k\}$. Assume that $\varphi(\mathcal{C})\subset (H_I \leq  k)$ and $\varphi(\mathcal{C}')\subset (H_I \geq  k)$. Then the birational map $f'\circ f^{-1}:\widetilde{X} \dasharrow \widetilde{X}'$ flips a $\P^{k-2}$ into a $\P^{m+1-k}$.
\item[-] Let $\mathcal{C}$ be a chamber of $\Mov(X)$, corresponding to small $\mathbb{Q}$-factorial modification $\widetilde{X}$ of $X$. Let $\sigma\subset \partial \mathcal{C}$ be a wall such that $\sigma\subset \partial \Mov(X)$, and let $f:\widetilde{X}\rightarrow Y$ be the corresponding elementary contraction. Then either $\sigma$ is supported on a hyperplane of the form $(y+x_i=0)$ or $(my+\sum_{i=1}^{m+2}x_i=0)$ and $f:\widetilde{X}\to Y$ is a $\P^1$-bundle, or $\sigma$ is supported on a hyperplane of the form $(x_i=0)$ or $(m-1)y-\sum_{i\in I}x_i+\sum_{i=1}^{m+2}x_i=0$, $I\subseteq \{1,\dots,m+2\}$, $|I|=2$, and $f:\widetilde{X}\to Y$ is the blow-up of a smooth point, and the exceptional divisor of $f$ is the image in $\widetilde{X}$ of either an exceptional divisor $E_{i}$ or a divisor of the form $H-\sum_{i\in I}E_i$ with $I\subset\{1,\dots,m+2\}$, $|I|=m$.
\end{itemize}
\end{thm}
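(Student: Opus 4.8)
The plan is to adapt to $X:=X_{m+2}^m$ the birational analysis of blow-ups of projective space of \cite{AM16}, organising the computation around the symmetry group $W\cong S_{m+3}$ generated by the permutations of $p_1,\dots,p_{m+2}$ and the standard Cremona transformation $\psi_m$ (Proposition \ref{autcrem}, Remark \ref{cremodd}). Since $X$ is a Mori dream space (\cite[Theorem 1.3]{CT06}, \cite[Theorem 1.3]{AM16}) all the cones are rational polyhedral. For the effective cone, the divisors $E_i$ and the strict transforms $H-\sum_{i\in I}E_i$ of hyperplanes through $m$ of the points ($|I|=m$) are visibly effective, giving one inclusion; for the reverse I would invoke the description of the faces of $\Eff(X)$ in \cite[Lemma 4.24]{CT06} and \cite[Corollary 2.5]{BDP16} used already in Lemma \ref{eff}, equivalently dualising against the moving curves of $X$ (the lines $\ell-e_i$ through one $p_i$, the degree-$m$ rational normal curves $m\ell-\sum_i e_i$ through all the $p_i$, and the nef class $H^{m-1}$), so that intersecting $D=yH+\sum_i x_iE_i$ with these produces the stated inequalities.

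For the movable cone one uses $\Mov(X)=\bigcup_i f_i^*\Nef(X_i)$ (Definition \ref{def:MDS}) and locates the walls. The wall between two adjacent nef chambers is the locus where the strict transform of a linear span $\Span{p_i:i\in I}\cong\mathbb{P}^{k-2}$, $|I|=k-1$, becomes trivial on the curves of the flipping locus: pairing $D$ with the class $(k-2)\ell-\sum_{i\in I}e_i$ of a degree-$(k-2)$ rational normal curve through the $k-1$ points of $I$ gives $(2-k)y-\sum_{i\in I}x_i=0$, and transporting by $\psi_m$ (Lemma \ref{founCrem}) gives the companion equation $(m-k+1)y-\sum_{i\in I}x_i+\sum_i x_i=0$ with $|I|=k$, i.e.\ the pairing with the Cremona-dual degree-$(m-k+1)$ rational normal curve through the $m+2-k$ points not in $I$; these are exactly the hyperplanes of (\ref{mcdeq}). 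The inequalities cutting $\Mov(X)$ out of $\Eff(X)$ are the case $k=2$: $x_i\le 0$ is $D\cdot e_i\ge 0$ (the stable base locus of a movable divisor cannot contain $E_i$), and $(m-1)y-\sum_{i\in I}x_i+\sum_i x_i\ge 0$, $|I|=2$, is the pairing with the degree-$(m-1)$ rational normal curve through the $m$ complementary points. The bound $2\le k\le\frac{m+3}{2}$ is forced because the linear flip of a $\mathbb{P}^{k-2}$ is inverse to that of a $\mathbb{P}^{m+1-k}$, the two agreeing when $k-2=m+1-k$.

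It remains to match walls with contractions. Crossing an interior wall with $3\le k\le\frac{m+3}{2}$, one tracks the exceptional data through the resulting elementary maps (again via Lemma \ref{founCrem} and the transformation law of the projective-space flip) to see that $f'\circ f^{-1}$ replaces a $\mathbb{P}^{k-2}$ by a $\mathbb{P}^{m+1-k}$ and that each small $\mathbb{Q}$-factorial modification occurring is smooth. For a wall $\sigma$ on $\partial\Mov(X)$ with contraction $f\colon\widetilde{X}\to Y$: if $f$ is of fiber type its general fiber is a line moving in a divisor, and identifying it with a fiber of a pencil $|H-E_i|$ of lines through $p_i$, or of one of its $\psi_m$-translates, shows $f$ is a $\mathbb{P}^1$-bundle with $\sigma$ on $(y+x_i=0)$ or $(my-\sum_{i=1}^{m+2}x_i=0)$; if $f$ is divisorial its exceptional divisor is, in the model $\widetilde X$, a $\mathbb{P}^{m-1}$ with normal bundle $\mathcal{O}_{\mathbb{P}^{m-1}}(-1)$, so $f$ inverts the blow-up of a smooth point, and the only prime divisors of $X$ whose transforms can be this divisor are the $E_i$ and the $H-\sum_{i\in I}E_i$ with $|I|=m$, giving $\sigma$ on $(x_i=0)$ or on $(m-1)y-\sum_{i\in I}x_i+\sum_i x_i=0$ with $|I|=2$.

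The delicate point is not the analysis of any single chamber, wall, or contraction — each reduces to a local computation with the Cremona formulas — but the \emph{completeness} of the arrangement: that (\ref{mcdeq}) lists \emph{all} the walls of the Mori fan and that, starting from $\Nef(X)$, crossing them sweeps out all of $\Mov(X)$ without producing further, or singular, small modifications. This means controlling the poset of models $X_i$ and their adjacencies, which is where one needs the finiteness of the reflection group $W(A_{m+2})\cong S_{m+3}$ (equivalently, that the number $m+2$ of points is at most $m+3$) together with an induction on $m$ — restricting the construction to the invariant linear subspaces $\Span{p_i:i\in I}$, which are blow-ups of smaller projective spaces of the same kind — or a direct reduction to the main theorems of \cite{AM16}.
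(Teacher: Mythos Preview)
Your analysis of the individual walls and contractions is essentially correct and well-organized, but your route is quite different from the paper's, and the difference matters precisely at the point you flag as delicate.

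The paper's proof is a two-line reduction: the effective cone is Lemma~\ref{eff}, and everything else is obtained from \cite[Theorem~1.3]{AM16} (the analogous statement for the blow-up of $\mathbb{P}^m$ at $m+3$ points) by invoking \cite[Theorem~1.2]{Ok16}. Okawa's theorem controls how the Mori chamber decomposition of a Mori dream space restricts along a surjective morphism to another Mori dream space; applied to the blow-down $X_{m+3}^m\to X_{m+2}^m$ contracting $E_{m+3}$, it says that the inequalities defining $\Mov(X_{m+2}^m)$, the wall arrangement (\ref{mcdeq}), and the description of the elementary contractions on the boundary are obtained from those of $X_{m+3}^m$ simply by deleting every inequality and wall that involves $E_{m+3}$. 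The completeness of the arrangement, the smoothness of all small modifications, and the identification of the flips are thus inherited wholesale from \cite{AM16} rather than re-proved.

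Your approach instead rebuilds the wall-and-chamber picture directly on $X_{m+2}^m$: pairing with the obvious curve classes to produce the hyperplanes, transporting by the Cremona to get their companions, and reading off the flip type wall by wall. This is sound and has the virtue of making the geometry of each wall transparent. But, as you yourself say, the real content is the \emph{completeness} of (\ref{mcdeq}) --- that no further walls appear and that the chambers you list exhaust $\Mov(X)$. Your final paragraph proposes to close this either by an induction on $m$ restricting to linear spans, or by ``a direct reduction to the main theorems of \cite{AM16}''; the second is exactly what the paper does, but the bridge making that reduction rigorous is Okawa's theorem, which you do not cite. Without it (or an equivalent statement relating the Mori fan of $X_{m+2}^m$ to a face of that of $X_{m+3}^m$), the reduction to \cite{AM16} is a plausible heuristic rather than a proof, and the inductive alternative would require its own non-trivial argument. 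So your proposal is correct in outline but leaves the key step as a pointer; the paper's route via \cite{Ok16} is what turns that pointer into a one-line deduction.
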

\begin{proof}
The statement about the effective cone is in Lemma \ref{eff}. By \cite[Theorem 1.2]{Ok16} the inequalities for the movable cones and for the Mori chamber decomposition follow by removing from the analogous systems of inequalities for the blow-up of $\mathbb{P}^m$ in $m+3$ points in \cite[Theorem 1.3]{AM16} the inequalities involving the exceptional divisor $E_{m+3}$. Similarly, by \cite[Theorem 1.2]{Ok16} the last two claims follow easily from the last two items in \cite[Theorem 1.3]{AM16}. For the computation of the walls in the Mori chamber decomposition we refer also to \cite{Mu05}.
\end{proof}

\begin{Remark}
Theorem \ref{thmcd} allows us to find explicit inequalities defining the cones $\Eff(X^{m}_{m+2})$, $\Mov(X^{m}_{m+2})$, and $\Nef(\widetilde{X})$, for any small $\mathbb{Q}$-factorial modification $\widetilde{X}$ of $X^{m}_{m+2}$. For instance, $\Nef(X^{m}_{m+2})$ is given by 
$$
\left\{
\begin{array}{ll}
x_i\leq 0 & i\in\{1,\dots,m+2\}\\ 
y+x_i+x_j\geq 0 & i,j\in\{1,\dots,m+2\}, \: i\neq j
\end{array}  
\right.
$$
\end{Remark}

\stepcounter{thm}
\subsection{The Fano model of $X_{m+2}^m$}\label{FanoMod}
First, let us consider the case when $m = 2g$ is even. Then the anti-canonical divisor
$$-K_{X_{2g+2}^{2g}} \sim (2g+1)H-(2g-1)\sum_{i=1}^{2g+2}E_i$$
lies in the interior of the chamber $\mathcal{C}_{Fano}$ defined by 
\stepcounter{thm}
\begin{equation}\label{eqeven}
\left\{
\begin{array}{ll}
(1-g)y-\sum_{i\in I}x_i=0 & if\ I\subseteq \{1,\dots,m+2\}, \: |I|=g\\ 
gy-\sum_{i\in I}x_i+\sum_{i=1}^{m+2}x_i=0 & if\ I\subseteq \{1,\dots,m+2\}, \: |I|=g+1
\end{array}  
\right.
\end{equation}
that is the chamber in (\ref{mcdeq}) of Theorem \ref{thmcd} for $k = g+1$. By Theorem \ref{thmcd} the chamber $\mathcal{C}_{Fano}$ corresponds to a smooth small $\mathbb{Q}$-factorial modification $X_{Fano}^{2g}$ of $X_{2g+2}^{2g}$ whose nef cone is given by (\ref{eqeven}). By Lemma \ref{canodd} we get that 
\stepcounter{thm}
\begin{equation}\label{GITFano}
X_{Fano}^{2g} \cong \Sigma_{2g}
\end{equation}
If $m = 2g-1$ is odd the class
$$-K_{X_{2g+1}^{2g-1}} \sim 2gH-(2g-2)\sum_{i=1}^{2g+1}E_i$$
lies in the intersection of the hyperplanes
\stepcounter{thm}
\begin{equation}\label{eqodd}
\left\{
\begin{array}{ll}
(1-g)y-\sum_{i\in I}x_i=0 & I\subseteq \{1,\dots,2g+1\}, \: |I|=g\\ 
(g-1)y-\sum_{i\in I}x_i+\sum_{i=1}^{2g+1}x_i=0 & I\subseteq \{1,\dots,2g+1\}, \: |I|=g+1
\end{array}  
\right.
\end{equation}
The variety corresponding to the intersection (\ref{eqodd}) is a small non $\mathbb{Q}$-factorial modification of $X_{2g+1}^{2g-1}$, and by Lemma \ref{can} we can identify this variety with the GIT quotient $\Sigma_{2g-1}$.

\begin{Corollary}\label{numexrays}
The Mori cone $\NE(X_{Fano}^{2g})$ of $X_{Fano}^{2g}$ has $\binom{2g+2}{g}+\binom{2g+2}{g+1}= \binom{2g+3}{g+1}$ extremal rays.
\end{Corollary}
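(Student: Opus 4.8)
The plan is to exploit the identification $X_{Fano}^{2g}\cong\Sigma_{2g}$ from \eqref{GITFano} together with the explicit description of $\Nef(X_{Fano}^{2g})$ as the chamber $\mathcal{C}_{Fano}$ cut out by the equations \eqref{eqeven}. Since $X_{Fano}^{2g}$ is a Mori dream space (it is a small $\mathbb{Q}$-factorial modification of the Mori dream space $X_{2g+2}^{2g}$), its nef cone is a finitely generated rational polyhedral cone, and $\NE(X_{Fano}^{2g})$ is its dual. Thus the number of extremal rays of $\NE(X_{Fano}^{2g})$ equals the number of facets of $\Nef(X_{Fano}^{2g})=\mathcal{C}_{Fano}$, provided no redundancy occurs among the defining inequalities.

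First I would observe that $\mathcal{C}_{Fano}$ is the chamber \eqref{mcdeq} of Theorem \ref{thmcd} for $k=g+1$, hence it is cut out by exactly the hyperplanes listed in \eqref{eqeven}: namely $(1-g)y-\sum_{i\in I}x_i=0$ for each $I\subseteq\{1,\dots,2g+2\}$ with $|I|=g$, giving $\binom{2g+2}{g}$ inequalities, and $gy-\sum_{i\in I}x_i+\sum_{i=1}^{2g+2}x_i=0$ for each $I$ with $|I|=g+1$, giving $\binom{2g+2}{g+1}$ inequalities. The total is $\binom{2g+2}{g}+\binom{2g+2}{g+1}$, which by Pascal's identity equals $\binom{2g+3}{g+1}$. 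Since $\mathcal{C}_{Fano}$ is a full-dimensional cone in $\Pic(X_{Fano}^{2g})_{\mathbb{R}}\cong\mathbb{R}^{2g+3}$, its dual $\NE(X_{Fano}^{2g})$ has one extremal ray for each facet of $\mathcal{C}_{Fano}$; the last two items of Theorem \ref{thmcd} show that each such hyperplane $H_I$ is a genuine wall (either an interior wall separating $\mathcal{C}_{Fano}$ from an adjacent chamber and thus corresponding to a flipping curve class, or a boundary wall of $\Mov$ corresponding to a $\mathbb{P}^1$-bundle or divisorial contraction), so none of the $\binom{2g+3}{g+1}$ inequalities is redundant.

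The main obstacle is precisely verifying non-redundancy: one must rule out the possibility that some of the $\binom{2g+2}{g}+\binom{2g+2}{g+1}$ hyperplanes fail to support a facet of $\mathcal{C}_{Fano}$. I would handle this by invoking the wall-crossing description in Theorem \ref{thmcd}: for $k=g+1$ one has $3\le k\le\frac{m+3}{2}=\frac{2g+3}{2}$ (valid for $g\ge 2$; the case $g=1$, where $\Sigma_2$ is the degree-five del Pezzo with $10$ extremal rays and $\binom{5}{2}=10$, is checked directly), so each wall $H_I$ with $|I|\in\{k-1,k\}=\{g,g+1\}$ bounds $\mathcal{C}_{Fano}$ and the birational geometry across it is nontrivial — hence the facet is genuine. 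Taking the dual cone then yields exactly $\binom{2g+3}{g+1}$ extremal rays of $\NE(X_{Fano}^{2g})\cong\NE(\Sigma_{2g})$, completing the proof.
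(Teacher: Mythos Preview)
Your proposal is correct and follows essentially the same route as the paper: the paper's proof simply observes that $X_{Fano}^{2g}$ is a Mori dream space, invokes duality between $\NE$ and $\Nef$, and counts the hyperplanes in \eqref{eqeven}. Your argument is in fact more scrupulous than the paper's, since you explicitly address why none of the $\binom{2g+2}{g}+\binom{2g+2}{g+1}$ hyperplanes is redundant (via the wall-crossing items of Theorem~\ref{thmcd}) and you treat $g=1$ separately; the paper omits both of these points.
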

\begin{proof}
Since $X_{Fano}^{2g}$ is a Mori Dream Space, $\NE(X_{Fano}^{2g})$ is polyhedral and finitely generated. Furthermore, $\NE(X_{Fano}^{2g})$ is dual to $\Nef(X_{Fano}^{2g})$. Therefore, the number of extremal rays of $\NE(X_{Fano}^{2g})$ is equal to the number of faces of $\Nef(X_{Fano}^{2g})$. Now, the statement follows from (\ref{eqeven}).
\end{proof}

\stepcounter{thm}
\subsection{Fibrations on $X_{Fano}^{2g}$}\label{fibrations}
Let us stick to the situation where $m=2g$ is the dimension of the GIT quotient. By the isomorphism (\ref{GITFano}) we may identify the Fano model $X_{Fano}^{2g}$ with the GIT quotient $\Sigma_{2g}$, which in turn, by Remark \ref{GITHas} is isomorphic to the moduli space $\cM_{0,\widetilde{A}[2g+3]}$ with weights $\widetilde{A}[2g+3] = \left(\frac{2}
{2g+3},\dots,\frac{2}{2g+3}\right)$. Now, let us consider the moduli space $\cM_{0,B[2g+3]}$ with weights $B[2g+3] = \left(\frac{2}{2g+2},\dots,\frac{2}{2g+2}\right)$, and the reduction morphism $\rho_{\widetilde{A}[2g+3],B[2g+3]}:\cM_{0,B[2g+3]}\rightarrow \cM_{0,\widetilde{A}[2g+3]}$. Note that $k\frac{2}{2g+2}>1$ if and only if $k\geq g+2$, and $k\geq g+2$ implies that $k\frac{2}{2g+3}>1$. Therefore, \cite[Corollary 4.7]{Ha} yields that $\rho_{\widetilde{A}[2g+3],B[2g+3]}$ is an isomorphism, and we may identify $X_{Fano}^{2g}$ with the moduli space $\cM_{0,B[2g+3]}$.

Recall that by Remark \ref{GITHas} we may interpret $\Sigma_{2g-1}$ as the Hassett space $\cM_{0,\widetilde{B}[2g+2]}$ with $\widetilde{B}[2g+2] = \left(\frac{2}{2g+2},\dots,\frac{2}{2g+2}\right)$. Therefore, for any $i = 1,\dots,2g+3$ we have a forgetful morphism
$$\pi_i:\cM_{0,B[2g+3]}\cong  X_{Fano}^{2g}\rightarrow \cM_{0,\widetilde{B}[2g+2]}\cong \Sigma_{2g-1}$$ 
Now, consider the $g$-planes in $X_{Fano}^{2g}$ described in (\ref{confodd}). Note that in $\textbf{C}\cup \textbf{D}$ we have 
\stepcounter{thm}
\begin{equation}\label{numexray}
\binom{2g+2}{g+1}+\binom{2g+2}{g} = \binom{2g+3}{g+1}
\end{equation}
$g$-planes. From the modular point of view these $g$-planes parametrize configurations of $2g+3$ points $(\mathbb{P}^1,x_1,\dots,x_{2g+3})$ in $\mathbb{P}^1$ with $g+1$ points coinciding. Let us fix a marked point, say $x_{2g+3}$. For any choice of $g+1$ points in $(x_1,\dots,x_{2g+2})$ we get a $g$-plane $H$ in $\textbf{C}\cup \textbf{D}$ and its complementary $g$-plane $H^c$. For instance, if $H$ is given by $(x_1=\dots=x_{g+1})$ then $H^c$ is defined by $(x_{g+2}=\dots=x_{2g+2})$. Note that $H$ and $H^c$ intersects in a point representing the configuration $(x_1=\dots=x_{g+1},x_{g+2}=\dots=x_{2g+2},x_{2g+3})$, and that the morphism $\pi_{2g+3}:\cM_{0,B[2g+3]}\cong  X_{Fano}^{2g}\rightarrow \cM_{0,\widetilde{B}[2g+2]}\cong \Sigma_{2g-1}$ contracts $H\cup H^c$ to the singular point of $\Sigma_{2g-1}$ representing the configuration $(x_1=\dots=x_{g+1},x_{g+2},\dots,x_{2g+2})= (x_1,\dots,x_{g+1},x_{g+2}=\dots=x_{2g+2})$.  

Thanks to the modular interpretation of $\pi_{2g+3}$ we see that $\pi_{2g+3}^{-1}(p)\cong\mathbb{P}^1$ for any $p\in \Sigma_{2g-1}\setminus \Sing(\Sigma_{2g-1})$, while $\pi_{2g+3}^{-1}(p_j)$ is the union of two $g$-planes in $\textbf{C}\cup \textbf{D}$ intersecting in one point for any $p_j\in\Sing(\Sigma_{2g-1})$. Indeed 
$$\frac{1}{2}\binom{2g+2}{g+1} = \binom{2g+1}{g}$$
is exactly the number of singular points of $\Sigma_{2g-1}$. More generally for any $i = 1,\dots,2g+3$ we may consider the set $\{x_1,\dots,x_{i-1},x_{i+1},\dots,x_{2g+3}\}$, and for any subset $I$ of cardinality $g+1$ of $\{x_1,\dots,x_{i-1},x_{i+1},\dots,x_{2g+3}\}$ we have a $g$-plane $H_I$ defined by requiring the points marked by $I$ to coincide, and the complementary $g$-plane $H_{I^c}$ defined as the locus parametrizing configurations where the marked points in $I^c$ coincide. Then the morphism $\pi_i:\cM_{0,B[2g+3]}\cong  X_{Fano}^{2g}\rightarrow \cM_{0,\widetilde{B}[2g+2]}\cong \Sigma_{2g-1}$ contracts the unions $H\cup H^c$ to the singular points of $\Sigma_{2g-1}$.

For any $g$-plane $H_I$ in $\textbf{C}\cup \textbf{D}$ we will denote by $L_I$ the class of a line in $H_I$. Note that $L_I$ is the class of the $1$-dimensional boundary stratum of $\Sigma_{2g}$ corresponding to configurations where the points in $I$ coincide, other $g$ points coincide as well, and the remaining two points are different. 

\begin{Proposition}\label{exray}
The classes $L_I,L_{I^c}$ of lines in $H_I,H_{I^c}\cong\mathbb{P}^g$ described above generate the extremal rays of $\NE(X_{Fano}^{2g})$.
\end{Proposition}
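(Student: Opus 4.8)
The plan is to show that each line class $L_I$ spans an extremal ray of $\NE(X_{Fano}^{2g})$ by exhibiting it as a dual vector to a facet of the nef cone, using the explicit description of $\Nef(X_{Fano}^{2g})$ coming from the chamber $\mathcal{C}_{Fano}$ in (\ref{eqeven}) of Theorem \ref{thmcd}. By Corollary \ref{numexrays} the cone $\NE(X_{Fano}^{2g})$ has exactly $\binom{2g+2}{g}+\binom{2g+2}{g+1}=\binom{2g+3}{g+1}$ extremal rays, and by (\ref{numexray}) this is precisely the number of $g$-planes in $\mathbf{C}\cup\mathbf{D}$; so it suffices to check that the classes $\{L_I\}_{H_I\in\mathbf{C}\cup\mathbf{D}}$ are pairwise distinct in $N_1(X_{Fano}^{2g})$ and that each is nef-extremal, i.e.\ lies on $\binom{2g+3}{g+1}-1$ (or at least $2g+2$, the needed codimension-one number since $\rho=2g+3$) of the facets of $\Nef(X_{Fano}^{2g})$ and pairs non-negatively with every nef class.

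First I would compute the numerical class of $L_I$ in the basis of $N_1(X_{Fano}^{2g})$ dual to the $\psi$-strict transforms of $H,E_1,\dots,E_{2g+2}$ identified in \ref{small}. For a $\gamma_i\in\mathbf{C}$ coming from $H_I=\Span{p_{i_1},\dots,p_{i_{g+1}}}\subset\P^{2g}$: the restriction $\mu_{g|H_I}$ is the standard Cremona of $\P^g$ (as computed at the start of \ref{linsubsodd}), so a general line of $\gamma_i$ pulls back under $f$ and $\psi$ to the strict transform of a rational normal curve of degree $g$ in $H_I$ through the $g+1$ points; tracking it through the blow-ups $f$ of Construction \ref{kblusym} and the flips of \ref{small} gives a class of the shape $g\,\ell-\sum_{j\in I}e_j$ (plus possible corrections from higher exceptional divisors, which one checks vanish because the RNC meets those centers transversally) — exactly analogous to (\ref{classrnc}). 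For a $\delta_i\in\mathbf{D}$, coming from the exceptional divisor $E_J^{g-1}$ over a $(g-1)$-plane spanned by $g$ of the $p_j$'s, a general line of the contracted $g$-plane is a fiber-type curve, with class of the form $\ell-\sum_{j\in J}e_j$ or similar; the cleanest route is the one flagged in \ref{confodd}, namely to obtain the $\delta$-classes from the $\gamma$-classes by applying the permutation of $S_{2g+3}$ (including the Cremona, Remark \ref{cremodd}) that swaps any $\delta_i$ with a $\gamma_j$, which is a symmetry of $X_{Fano}^{2g}$ and hence of $\NE(X_{Fano}^{2g})$. Then I would verify, by direct substitution into the linear forms of (\ref{eqeven}), that $L_I$ annihilates $2g+2$ independent facet forms of $\Nef(X_{Fano}^{2g})$ — the ones indexed by subsets meeting $I$ appropriately — and is non-negative on the rest.

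Because both $L_I$ and the facet structure of $\Nef$ are $S_{2g+3}$-equivariant (via Remark \ref{cremodd}), it is enough to run this facet-incidence check for one representative $H_I$ of each of the two $S_{2g+3}$-orbits $\mathbf{C}$ and $\mathbf{D}$; the rest follows by transport of structure. Distinctness of the $L_I$ then follows since $S_{2g+3}$ acts on them compatibly with its action on the index sets $I$ (and on the two types), and the count $\binom{2g+3}{g+1}$ matches Corollary \ref{numexrays} on the nose, so the $\{L_I\}$ exhaust the extremal rays.

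I expect the main obstacle to be the bookkeeping in the first computation: writing down the class of a line in $\gamma_i$ (and $\delta_i$) exactly in $N_1(X_{Fano}^{2g})$ requires controlling how the strict transform of the relevant curve interacts with \emph{all} the exceptional divisors of the iterated blow-up $f$ and then with the sequence of flips $\psi$ of \ref{small}, not just the divisors over points. The modular language helps here — on the Hassett model $\cM_{0,\widetilde B[2g+3]}$ the curves $L_I$ are honest one-dimensional boundary strata (F-type curves) whose intersection numbers with the generators $H,E_1,\dots,E_{2g+2}$ can be read off from the geometry of the degenerate configuration — so I would carry out the computation on $\cM_{0,\widetilde B[2g+3]}\cong X_{Fano}^{2g}$ rather than by chasing the flips explicitly, and then only need the dictionary between $\psi$-strict transforms of $H,E_i$ and boundary/$\psi$-classes to land in the inequalities of (\ref{eqeven}).
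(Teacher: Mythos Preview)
Your approach is correct in outline but genuinely different from the paper's. The paper does \emph{not} compute the numerical classes of the $L_I$ or match them against the facet equations of $\Nef(X_{Fano}^{2g})$. Instead it argues via the forgetful morphisms $\pi_i:X_{Fano}^{2g}\cong\cM_{0,\widetilde{B}[2g+3]}\to\Sigma_{2g-1}\cong\cM_{0,\widetilde{B}[2g+2]}$: the generic fibre of $\pi_i$ is a $\P^1$, the fibre over a singular point of $\Sigma_{2g-1}$ is $H_I\cup H_{I^c}$, and a surface degeneration shows the generic fibre is numerically $L_I+L_{I^c}$. Hence $\NE(\pi_i)=\langle L_I,L_{I^c}\rangle$, and since the relative cone $\NE(\pi_i)$ is an extremal face of $\NE(X_{Fano}^{2g})$ \cite[Proposition 1.14]{De01}, each $L_I$ spans an extremal ray. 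The count from Corollary~\ref{numexrays} finishes (this last step is your own, carried out in the paper as Theorem~\ref{ful}).

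The trade-off: the paper's argument is short and avoids exactly the bookkeeping you flag as the main obstacle --- tracking curve classes through the iterated blow-up and the flips of \ref{small} --- at the cost of invoking the extremality of relative Mori cones. Your route is more self-contained and would make the duality with (\ref{eqeven}) completely explicit, but you would need to actually carry out the class computation; note also a small wording slip in your plan: what you want is not that $L_I$ ``annihilates $2g+2$ facet forms'' of $\Nef$, but that the linear form $D\mapsto D\cdot L_I$ on $N^1$ \emph{coincides} (up to positive scalar) with one of the facet-defining inequalities in (\ref{eqeven}). Equivalently, $L_I$ should pair to zero with every divisor on a single facet of $\Nef$ and positively with $-K$. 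Once stated this way, the $S_{2g+3}$-equivariance reduction you propose is exactly right.
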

\begin{proof}
From the above discussion we have that for any $L_I$ there is a complementary class $L_{I^c}$, and a forgetful morphism $\pi_i:\cM_{0,B[2g+3]}\cong  X_{Fano}^{2g}\rightarrow \cM_{0,\widetilde{B}[2g+2]}\cong \Sigma_{2g-1}$, such that the fibers of $\pi_i$ over $\Sigma_{2g-1}\setminus \Sing(\Sigma_{2g-1})$ are isomorphic to $\mathbb{P}^1$, and $H_I\cup H_{I^c}$ is contracted by $\pi_i$ onto a singular point of $\Sigma_{2g-1}$. Our aim is to compute the relative Mori cone $\NE(\pi_i) $ of the morphism $\pi_i$. Let $C\subset X_{Fano}^{2g}$ be a curve contracted by $\pi_i$. Then $C$ is either a fiber of $\pi_i$ over a smooth point of $\Sigma_{2g-1}$ or a curve in $H_{I}\cup H_{I^c}$. In the latter case the class of $C$ may be written as a combination with non-negative coefficients of the classes $L_I$ and $L_{I^c}$. 

Now, for simplicity of notation assume that $i = 2g+3$, and consider the surface $S\subseteq X_{Fano}^{2g}$ parametrizing configurations of points of the type $(x_1=\dots=x_g,x_{g+1},x_{g+2}=\dots=x_{2g+1},x_{2g+2},x_{2g+3})$. Then the image of $\pi_{2g+3|S}:S\rightarrow \Sigma_{2g-1}$ is the line $\Gamma$ passing through a singular point $p\in \Sigma_{2g-1}$ parametrizing the configurations $(x_1=\dots=x_g,x_{g+1},x_{g+2}=\dots=x_{2g+1},x_{2g+2},x_{2g+3})$. Furthermore, $\pi_{2g+3|S}^{-1}(q)\cong \mathbb{P}^1$ for any $q\in \Gamma\setminus\{p\}$, and $\pi_{2g+3|S}^{-1}(p)= L_I\cup L_{I^c}$. Therefore, the general fiber of $\pi_{2g+3}$ is numerically equivalent to $L_I + L_{I^c}$.

This means that $\NE(\pi_i)$ is generated by the classes $L_I,L_{I^c}$ for $\{x_1,\dots,x_{i-1},x_{i+1},\dots,x_{2g+3}\}$ with $|I| = g+1$. Finally, since by \cite[Proposition 1.14]{De01} the subcone $\NE(\pi_i)$ of $\NE(X_{Fano}^{2g})$ is extremal, we get that $L_I,L_{I^c}$ generate extremal rays of $\NE(X_{Fano}^{2g})$.
\end{proof}

\begin{thm}\label{ful}
The Mori cone of the GIT quotient $\Sigma_{2g}$ is generated by the classes $L_I,L_{I^c}$ of $1$-dimensional boundary strata.
\end{thm}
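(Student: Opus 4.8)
The plan is to read off the statement from what has already been established for the Fano model. First I would use the isomorphism $(\ref{GITFano})$, namely $\Sigma_{2g}\cong X_{Fano}^{2g}$, to transport the problem to the computation of $\NE(X_{Fano}^{2g})$. By Proposition \ref{exray} every class $L_I$ (and its complementary $L_{I^c}$) of a line in one of the $g$-planes of the configuration $\mathbf{C}\cup\mathbf{D}$ spans an extremal ray of $\NE(X_{Fano}^{2g})$; so the only thing left to check is that these rays exhaust the whole Mori cone.

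For this I would argue by counting. By $(\ref{numexray})$ there are exactly $\binom{2g+2}{g+1}+\binom{2g+2}{g}=\binom{2g+3}{g+1}$ $g$-planes in $\mathbf{C}\cup\mathbf{D}$, hence $\binom{2g+3}{g+1}$ line classes $L_I$ (the involution $H_I\mapsto H_{I^c}$ merely permutes this set, so the $L_{I^c}$ are already among the $L_I$). Once one checks that these classes span pairwise distinct rays, Proposition \ref{exray} produces $\binom{2g+3}{g+1}$ distinct extremal rays of $\NE(X_{Fano}^{2g})$. On the other hand, by Corollary \ref{numexrays} the cone $\NE(X_{Fano}^{2g})$ has precisely $\binom{2g+3}{g+1}$ extremal rays. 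Therefore the rays generated by the $L_I$ account for all of them, and $\NE(\Sigma_{2g})=\NE(X_{Fano}^{2g})$ is generated by the classes $L_I$, $L_{I^c}$.

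It then remains to recall the modular meaning of these generators: as noted in the discussion preceding Proposition \ref{exray}, $L_I$ is the class of the $1$-dimensional boundary stratum of $\Sigma_{2g}$ parametrizing configurations in which the marked points indexed by $I$ coincide, another $g$ of the marked points coincide among themselves, and the remaining two points are distinct. This gives the asserted description of the extremal rays.

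The only point requiring care is the pairwise non-proportionality of the classes $L_I$. I would settle it either by pairing each $g$-plane $H_I$ with the unique supporting hyperplane of $\Nef(X_{Fano}^{2g})$ meeting it, using the explicit equations $(\ref{eqeven})$ of the nef cone, or, more directly, by computing the intersection numbers of $L_I$ with the generators $H,E_1,\dots,E_{2g+2}$ of $\Pic(X_{Fano}^{2g})$ and observing that distinct index sets $I$ yield distinct intersection profiles. Everything else in the argument is bookkeeping built on Proposition \ref{exray} and Corollary \ref{numexrays}.
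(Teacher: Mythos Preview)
Your argument is essentially the same as the paper's: invoke Proposition~\ref{exray} to get that the $L_I$ span extremal rays, count them via (\ref{numexray}), and match this against the total number of extremal rays from Corollary~\ref{numexrays}. The only difference is that you flag the distinctness of the rays as a point requiring verification, whereas the paper simply asserts it; your suggested check via intersection profiles or via the facets in (\ref{eqeven}) is fine and arguably makes the argument cleaner.
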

\begin{proof}
By Proposition \ref{exray} the classes $L_I,L_{I^c}$ generate extremal rays of $\NE(\Sigma_{2g})$. By (\ref{numexray}) these are $\binom{2g+3}{g+1}$ rays which by Corollary \ref{numexrays} is exactly the number of extremal rays of $\NE(\Sigma_{2g})$.
\end{proof}

\begin{Remark}\label{flips}
Consider the Mori chamber decomposition in Theorem \ref{thmcd}. Note that in order to go from the chamber corresponding to $\Nef(X^{2g}_{2g+2})$ to the chamber corresponding to $\Nef(X^{2g}_{Fano})$ we must cross the walls in (\ref{mcdeq}) for any $3\leq k\leq 2g+1$. Indeed by the modular description of the small modification $\psi:X^{2g}_{2g+2}\dasharrow X^{2g}_{Fano}$ we see that it factors as:
$$X_0 = X^{2g}_{2g+2} \stackrel{\psi_1}{\dasharrow} X_1\stackrel{\psi_2}{\dasharrow} X_2\stackrel{\psi_3}{\dasharrow}\dots\stackrel{\psi_{g-1}}{\dasharrow}X_{g-1} = X^{2g}_{Fano}$$
where $\psi_i:X_{i-1}\dasharrow X_{i}$ is the flip of the strict transform in $X_{i-1}$ of the $i$-planes in $\mathbb{P}^{2g}$ generated by $i+1$ among the blown-up points. These strict transforms are disjoint $i$-planes in $X_{i- 1}$, while the flipped locus in $X_i$ is a disjoint union of $(2g-1-i)$-planes. In particular the $\binom{2g+2}{g+1}+\binom{2g+2}{g} = \binom{2g+3}{g+1}$ extremal rays in (\ref{numexray}) correspond to the $\binom{2g+2}{g+1}$ $g$-planes coming as strict transforms of the $g$-planes in $\mathbb{P}^m$ generated by $g+1$ of the marked points, plus the $g$-planes that are the flipped loci of the $(g-1)$-planes in $\mathbb{P}^n$ generated by $g$ of the marked points.
\end{Remark}

\stepcounter{thm}
\subsection{The cone of moving curves of $\Sigma_{2g}$}
In this section we describe extremal rays of the cone of moving curves $\Mov_1(\Sigma_{2g})$ of $\Sigma_{2g}$. Recall that an irreducible curve $C$ on a projective variety $X$ is called a moving
curve if $C$ is a member of an algebraic family of curves covering a dense subset of $X$. By \cite[Theorems 2.2 and 2.4]{BDPP13} the cone of moving curves is dual to the cone of pseudoeffective divisor classes which is spanned by classes that appear as limits of sequences of effective $\mathbb{Q}$-divisors.

By Theorem \ref{thmcd} we have that $\Eff(\Sigma_{2g})$ is closed and therefore $\Mov_1(\Sigma_{2g})$ is the dual cone of $\Eff(\Sigma_{2g})$. In particular, by the description of $\Eff(X^{2g}_{2g+2})$ in Theorem \ref{thmcd} we get that $\Mov_1(\Sigma_{2g})$ has exactly $4g+6$ extremal rays. 

\begin{thm}\label{moving}
The cone of moving curves $\Mov_1(\Sigma_{2g})$ of $\Sigma_{2g}$ is generated by the classes of the fiber of the forgetful morphisms $\pi_i:\Sigma_{2g}\rightarrow\Sigma_{2g-1}$ for $i = 1,\dots,2g+3$, and the strict transforms in $\Sigma_{2g}$ of a general line in $\mathbb{P}^{2g}$ and of the degree $2g$ rational normal curves in $\mathbb{P}^{2g}$ passing through $2g+1$ of the blown-up points. 
\end{thm}
\begin{proof}
By the description of the facets of $\Eff(X^{2g}_{2g+2})$ in Theorem \ref{thmcd} we see that the $4g+6$ extremal rays of $\Mov_1(X^{2g}_{2g+2}) = \Eff(X^{2g}_{2g+2})^{\vee}$ are generated by the classes of the strict transforms in $\Sigma_{2g}$ of a general line in $\mathbb{P}^{2g}$ and of the degree $2g$ rational normal curves in $\mathbb{P}^{2g}$ passing through $2g+1$ of the blown-up points, together with the classes of the strict transform $L_i$ of a line through the blown-up point $p_i\in\mathbb{P}^{2g}$ for $i = 1,\dots,2g+2$, and of the strict transform of a degree $2g$ rational normal curve $C$ through $p_1,\dots,p_{2g+2}$. We will identify this last two classes of curves with fibers of the forgetful morphisms. 

Now, consider the following diagram
\[
  \begin{tikzpicture}[xscale=1.7,yscale=-1.4]
    \node (A0_0) at (0, 0) {$\overline{\mathcal{M}}_{0,2g+3}$};
    \node (A0_2) at (2, 0) {$\Sigma_{2g}$};
    \node (A1_0) at (0, 1) {$\overline{\mathcal{M}}_{0,2g+2}$};
    \node (A1_1) at (1, 1) {$X_{2g+2}^{2g}$};
    \node (A1_2) at (2, 1) {$\Sigma_{2g-1}$};
    \path (A0_0) edge [->,swap]node [auto] {$\scriptstyle{\widetilde{\pi}_i}$} (A1_0);
    \path (A0_0) edge [->]node [auto] {$\scriptstyle{f}$} (A1_1);
    \path (A1_1) edge [->,dashed]node [auto] {$\scriptstyle{\psi}$} (A0_2);
    \path (A0_2) edge [->]node [auto] {$\scriptstyle{\pi_i}$} (A1_2);
    \path (A0_0) edge [->]node [auto] {$\scriptstyle{\rho}$} (A0_2);
  \end{tikzpicture}
  \]
where $\psi: X^{2g}_{2g+2}\dasharrow \Sigma_{2g}$ is the sequence of flips in Remark \ref{flips}, $\rho:\overline{\mathcal{M}}_{0,2g+3}\rightarrow\Sigma_{2g}$ is the reduction morphism, $f:\overline{\mathcal{M}}_{0,2g+3}\rightarrow X^{2g}_{2g+2}$ is the Kapranov's blow-up morphism in Construction \ref{kblusym}, $\pi_i:\Sigma_{2g}\rightarrow\Sigma_{2g-1}$ is a forgetful morphism, and $\widetilde{\pi}_i:\overline{\mathcal{M}}_{0,2g+3}\rightarrow\overline{\mathcal{M}}_{0,2g+2}$ is the corresponding forgetful morphism on $\overline{\mathcal{M}}_{0,2g+3}$. Now, by \cite{Ka} the fibers of the forgetful morphism $\widetilde{\pi}_i:\overline{\mathcal{M}}_{0,2g+3}\rightarrow\overline{\mathcal{M}}_{0,2g+2}$ are either the strict transforms of the lines through a point $p_i$ or the strict transforms of the degree $2g$ rational normal curves through $p_1,\dots,p_{2g+2}$. Now, note that $\rho$, and hence $\psi$, map a general fiber of $\widetilde{\pi}_i$ onto a general fiber of $\pi_i$.

Finally, to conclude it is enough to observe that since the $L_i$ and $C$ generate extremal rays of $\Mov_1(X^{2g}_{2g+2})$ and $\psi: X^{2g}_{2g+2}\dasharrow \Sigma_{2g}$ is a sequence of flips of small elementary contractions \cite[Proposition 3.14]{Bar08} yields that the fibers of the forgetful morphisms $\pi_i:\Sigma_{2g}\rightarrow\Sigma_{2g-1}$ generates extremal rays of $\Mov_1(\Sigma_{2g})$.
\end{proof}

\stepcounter{thm}
\subsection{Analogy with moduli of weighted pointed curves}
Finally, we would like to stress another link between the GIT quotients we are studying and moduli of weighted curves. Consider the map:
\stepcounter{thm}
\begin{equation}\label{mcdhas}
\begin{array}{cccc}
\phi: &\Eff(X^{2g}_{2g+2})\subset\Pic(X^{2g}_{2g+2})_{\mathbb{Q}}& \longrightarrow & \mathbb{Q}^{2g+3}\\
      & (y,x_1,\dots,x_{2g+2}) & \longmapsto & (a_1,\dots,a_{2g+3}),
\end{array}
\end{equation}
where 
$$a_j = \frac{y+x_j}{(2g+1)y+\sum_{i=1}^{2g+2}x_i},\ \mathrm{for}\ j = 1,\dots,2g+2$$ 
and 
$$a_{2g+3} = 2-\sum_{i=1}^{2g+2}a_i$$ 
Note that $\phi$ maps $\Eff(X^{2g}_{2g+2})$ onto the hypercube $[0,1]^{2g+3}\subset\mathbb{R}^{2g+3}$. Furthermore, via $\phi$ the walls of the Mori chamber decomposition in (\ref{mcdeq}) translate into $\sum_{i\in I}a_i = 1$ for $I\subset\{1,\dots,2g+3\}$, with $|I|\in \{k-1,k\}$ and $2\leq k\leq \frac{2g+3}{2}$. These walls are exactly the ones used in \cite[Section 8]{Ha} to describe variations of some GIT quotients of products of $\P^1$ in terms of moduli of weighted curves. Therefore, interpreting the rational numbers $(a_1,\dots,a_{2g+3})$ as the weights of pointed curves, we have that the map $\phi$ in (\ref{mcdhas}) translates the Mori chamber decomposition of Theorem \ref{thmcd} into the GIT chamber decomposition from \cite[Section 8]{Ha}.

For instance, $\phi(-K_{X^3_{5}}) = \phi(4,-2,\dots,-2) = \left(\frac{1}{3},\dots,\frac{1}{3}\right)$ and by Remark \ref{GITHas} these are the weights of the Segre cubic threefold. Similarly, taking $D=3H-E_1-\dots-E_5$, which is ample on $X^{3}_5$, we have that $\phi(D) = \phi(3,-1,\dots,-1) = \left(\frac{2}{7},\dots,\frac{2}{7},\frac{4}{7}\right)$, and by Construction \ref{kblusym} the moduli space with these weights is isomorphic to $X^3_{5}$ itself. 

\section{The Hilbert polynomial of $\Sigma_m$}\label{degrees}
In this section, we use part of the preliminary results developed in the preceding ones to describe some projective geometry of the GIT quotients. In particular we make massive use once again of the linear systems on the projective space of Theorem \ref{Kum}, in order to compute the Hilbert polynomial and the degree of $\Sigma_{2g-1}$ and $\Sigma_{2g}$ in their natural embeddings in the spaces of invariants. 

For any integer $0\leq r \leq s-1$ and for any multi-index $I(r) = \{i_1,\dots,i_{r+1}\} \subseteq\{1,\dots,s\}$ set
\stepcounter{thm}
\begin{equation}\label{kdef}
k_{I(r)} = \max\{m_{i_1}+\dots+m_{i_{r+1}}-rd,0\}
\end{equation}
In \cite[Definition 3.2]{BDP16} the authors define the \textit{linear virtual dimension} of a linear system $\mathcal{L}=\mathcal{L}_{n,d}(m_1,\dots,m_s)$ on $\P^n$ as the number
\stepcounter{thm}
\begin{equation}\label{lexdim}
\binom{n+d}{d}+\sum_{r=0}^{s-1}\sum_{I[r]\subseteq\{1,\dots,s\}}(-1)^{r+1}\binom{n+k_{I[r]}-r-1}{n}-1
\end{equation}
Furthermore, they define the \textit{linear expected dimension} of $\mathcal{L}_{n,d}(m_1,\dots,m_s)$, denoted by $\ldim(\mathcal{L})$, as follows: if the linear system $\mathcal{L}_{n,d}(m_1,\dots,m_s)$ is contained in a linear system whose linear virtual dimension is negative then we set $\ldim(\mathcal{L}) = -1$, otherwise we define $\ldim(\mathcal{L})$ to be the maximum between the linear virtual dimension of $\mathcal{L}_{n,d}(m_1,\dots,m_s)$ and $-1$.
\begin{Proposition}\label{hilbgen}
Let $\phi_{\mathcal{L}}:X\dasharrow\mathbb{P}(H^0(\mathbb{P}^{n},\mathcal{L})^{*})$ be the rational map induced by the linear system $\mathcal{L}:=\mathcal{L}_{n,d}(m_1,\dots,m_s)$. Assume that $\phi_{\mathcal{L}}$ is birational, and let $X_{\mathcal{L}} = \overline{\phi_{\mathcal{L}}(\mathbb{P}^n)}$. If $s\leq n+2$ then the Hilbert polynomial of $X_{\mathcal{L}}$ is given by 
$$h_{X_{\mathcal{L}}}(t) = \binom{dt+n}{n}+\sum_{r=0}^{s-1}\sum_{I[r]\subseteq\{1,\dots,s\}} (-1)^{r+1}\binom{n+tk_{I[r]}-r-1}{n}$$
In particular $\deg(X_{\mathcal{L}})= d^n+\sum_{r=0}^{s-1}\sum_{I[r]\subseteq\{1,\dots,s\}} (-1)^{r+1}k_{I[r]}^n$.
\end{Proposition}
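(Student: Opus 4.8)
The plan is to compute the Hilbert polynomial of $X_{\mathcal{L}}$ directly from the linear system $\mathcal{L}$, using the birational map $\phi_{\mathcal{L}}$ to identify $H^0(X_{\mathcal{L}},\mathcal{O}_{X_{\mathcal{L}}}(t))$ with the full space $H^0(\mathbb{P}^n,\mathcal{L}^{(t)})$, where $\mathcal{L}^{(t)} := \mathcal{L}_{n,td}(tm_1,\dots,tm_s)$ is the linear system of degree $td$ hypersurfaces with multiplicity $tm_i$ at $p_i$. The first step is to make this identification precise: since $\phi_{\mathcal{L}}$ is birational onto $X_{\mathcal{L}}$ and $X_{\mathcal{L}}\subset\mathbb{P}^N$ is projectively normal (or at least, its homogeneous coordinate ring agrees with $\bigoplus_t H^0(\mathbb{P}^n,\mathcal{L}^{(t)})$ in large degree — this is where the birationality and the projective setup are used), we get $h_{X_{\mathcal{L}}}(t) = h^0(\mathbb{P}^n,\mathcal{L}^{(t)})$ for $t\gg 0$, which suffices to determine the Hilbert polynomial. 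First I would spell out why $\mathcal{L}^{(t)}$ is the correct $t$-th power: a degree-$d$ hypersurface with multiplicity $m_i$ at $p_i$, when multiplied $t$ times, gives degree $td$ and multiplicity $tm_i$, and conversely the Veronese-type relations produced by the embedding mean the degree-$t$ piece of the coordinate ring is exactly the image of $\mathrm{Sym}^t H^0(\mathbb{P}^n,\mathcal{L})$, whose saturation is $H^0(\mathbb{P}^n,\mathcal{L}^{(t)})$.

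The second and main step is to evaluate $h^0(\mathbb{P}^n,\mathcal{L}^{(t)})$ using the results of Brambilla--Dumitrescu--Postinghel \cite{BDP16}. The hypothesis $s\le n+2$ is exactly the regime in which their machinery shows that the linear system $\mathcal{L}^{(t)}$ is \emph{non-special in the linear sense}, i.e. its dimension equals its linear expected dimension $\ldim(\mathcal{L}^{(t)})$, and moreover (for $t$ large and the systems arising here, which contain no multiple base components forcing $\ldim=-1$) this linear expected dimension equals the linear virtual dimension given by formula (\ref{lexdim}). So I would invoke \cite{BDP16} to write
$$h^0(\mathbb{P}^n,\mathcal{L}^{(t)}) = \binom{td+n}{n} + \sum_{r=0}^{s-1}\sum_{I[r]\subseteq\{1,\dots,s\}}(-1)^{r+1}\binom{n+k^{(t)}_{I[r]}-r-1}{n},$$
where $k^{(t)}_{I[r]} = \max\{tm_{i_1}+\dots+tm_{i_{r+1}} - r\,td,\,0\} = t\cdot k_{I[r]}$ by the definition in (\ref{kdef}), since scaling degree and all multiplicities by $t$ scales the quantity $m_{i_1}+\dots+m_{i_{r+1}}-rd$ by $t$ and preserves the sign, hence commutes with the truncation at $0$. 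Substituting $k^{(t)}_{I[r]} = t k_{I[r]}$ gives the claimed formula for $h_{X_{\mathcal{L}}}(t)$.

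The final step is the degree computation: the degree of $X_{\mathcal{L}}$ of dimension $n$ is $n!$ times the leading coefficient of $h_{X_{\mathcal{L}}}(t)$. The leading term of $\binom{td+n}{n}$ is $\frac{d^n}{n!}t^n$, and the leading term of $\binom{n+tk_{I[r]}-r-1}{n}$ is $\frac{k_{I[r]}^n}{n!}t^n$ (when $k_{I[r]}>0$; terms with $k_{I[r]}=0$ contribute a constant and are irrelevant to the leading coefficient, consistent with $k_{I[r]}^n=0$). Collecting leading coefficients and multiplying by $n!$ yields $\deg(X_{\mathcal{L}}) = d^n + \sum_{r,I[r]}(-1)^{r+1}k_{I[r]}^n$.

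I expect the main obstacle to be the first step rather than the second: carefully justifying that $h_{X_{\mathcal{L}}}(t)$ really equals $h^0(\mathbb{P}^n,\mathcal{L}^{(t)})$ as \emph{polynomials} (equivalently, for all large $t$), which requires controlling the saturation and confirming that the image variety's coordinate ring has the expected dimension in each large degree. The combinatorial identity $k^{(t)}_{I[r]} = t k_{I[r]}$ and the leading-term extraction are routine once the non-speciality input from \cite{BDP16} is in hand; the genuinely delicate point is matching the geometry of $X_{\mathcal{L}}\subset\mathbb{P}^N$ with the linear algebra of $\mathcal{L}$ and verifying that the hypothesis $s\le n+2$ places us squarely in the range where \cite{BDP16} gives $\dim\mathcal{L}^{(t)} = \ldim(\mathcal{L}^{(t)}) = $ the virtual dimension (\ref{lexdim}), with no correction from contained multiple subvarieties.
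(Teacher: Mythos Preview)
Your proposal is correct and follows essentially the same route as the paper: identify $h_{X_{\mathcal{L}}}(t)$ with $h^0(\mathbb{P}^n,t\mathcal{L})$, invoke \cite[Theorem 4.6]{BDP16} (valid since $s\le n+2$) to equate this with the linear virtual dimension, use the scaling $k_{I[r]}(t\mathcal{L})=t\,k_{I[r]}(\mathcal{L})$, and read off the degree from the leading coefficient (the paper phrases this last step via the volume, which is the same computation). You are in fact more scrupulous than the paper about the first identification---the paper simply asserts $h_{X_{\mathcal{L}}}(t)=h^0(\mathbb{P}^n,t\mathcal{L})$ without discussing saturation or projective normality.
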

\begin{proof}
Polynomials of degree $t\in\mathbb{N}$ on $\mathbb{P}(H^0(\mathbb{P}^{n},\mathcal{L})^{*})$ correspond to degree $td$ polynomials on $\mathbb{P}^{n}$ vanishing with multiplicity $m_i$ at $p_i$. Therefore, the Hilbert polynomial of $X_{\mathcal{L}}$ is given by 
$$h_{X_{\mathcal{L}}}(t) = h^0(\mathbb{P}^{n},t\mathcal{L})$$
Now, since $t\mathcal{L}$ is effective for any $t\geq 0$, and $s \leq n+1$ \cite[Theorem 4.6]{BDP16} yields that 
$$h^0(\mathbb{P}^{n},t\mathcal{L}) = \ldim(t\mathcal{L})+1$$
where $\ldim(t\mathcal{L})$ is the linear expected dimension of $t\mathcal{L}$ defined by (\ref{lexdim}). To get the formula for the Hilbert polynomial it is enough to observe that, in the notation of (\ref{kdef}), for the linear system $t\mathcal{L}_g$ we have 
$$k_{I[r]}(t\mathcal{L}) = \max\{t(m_{i_1}+\dots+m_{i_{r+1}}-rd),0\}= tk_{I[r]}(\mathcal{L})$$
Finally, note that $h_{X_{\mathcal{L}}}(t)$ may be written as
$$h_{X_{\mathcal{L}}}(t) = \frac{d^n+\sum_{r=0}^{s-1}\sum_{I[r]\subseteq\{1,\dots,s\}}k_{I[r]}^n}{n!}t^{n}+P(t)$$
where $P(t)$ is a polynomial in $t$ of degree $\deg(P)\leq n-1$. Therefore, the volume of the big linear system $\mathcal{L}$ is given by
$$\Vol(\mathcal{L}) = \limsup_{t\mapsto +\infty}\frac{h_{X_{\mathcal{L}}}(t)}{t^{n}/n!}= d^n+\sum_{r=0}^{s-1}\sum_{I[r]\subseteq\{1,\dots,s\}}k_{I[r]}^n$$
and $\Vol(\mathcal{L})$ is exactly the degree $X_{\mathcal{L}}\subseteq\mathbb{P}(H^0(\mathbb{P}^{n},\mathcal{L})^{*})$.
\end{proof}

Recall from Section \ref{oddismi} the we define $\mathcal{L}_{2g-1} = \mathcal{L}_{2g-1,g}(g-1,\dots,g-1)$ as the linear system of degree $g$ forms on $\mathbb{P}^{2g-1}$ vanishing with multiplicity $g$ at $2g+1$ general points $p_{1},\dots,p_{2g+1}\in\mathbb{P}^{2g-1}$ and that it induces a birational map $\sigma_g:\mathbb{P}^{2g-1}\dasharrow\mathbb{P}(H^0(\mathbb{P}^{2g-1},\mathcal{L}_{2g-1})^{*}).$ The GIT quotient $\Sigma_{2g-1}$ is the closure of the image of $\sigma_g$ in $\mathbb{P}(H^0(\mathbb{P}^{2g-1},\mathcal{L}_{2g-1})^{*})$. 

\begin{Corollary}\label{hilbseg}
The Hilbert polynomial of $\Sigma_{2g-1}\subseteq \mathbb{P}(H^0(\mathbb{P}^{2g-1},\mathcal{L}_{2g-1})^{*})$ is given by 
$$h_{\Sigma_{2g-1}}(t) = \binom{gt+2g-1}{2g-1}+\sum_{r=0}^{g-2}(-1)^{r+1}\binom{2g+1}{r+1}\binom{t(g-r-1)+2g-1-r-1}{2g-1}$$
In particular
$$h^0(\mathbb{P}^{2g-1},\mathcal{L}_{2g-1}) = \binom{3g-1}{2g-1}+\sum_{r=0}^{g-2}(-1)^{r+1}\binom{2g+1}{r+1}\binom{3g-2r-3}{2g-1}$$
and $\deg(\Sigma_{2g-1})= g^{2g-1}+\sum_{r=0}^{g-2}(-1)^{r+1}\binom{2g+1}{r+1}(g-r-1)^{2g-1}$.
\end{Corollary}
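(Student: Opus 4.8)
The plan is to apply Proposition \ref{hilbgen} directly to the linear system $\mathcal{L} = \mathcal{L}_{2g-1} = \mathcal{L}_{2g-1,g}(g-1,\dots,g-1)$, so that in the notation of that proposition $n = 2g-1$, $d = g$, $s = 2g+1$ and $m_1 = \dots = m_s = g-1$. First I would check the hypotheses: the map $\sigma_g = \phi_{\mathcal{L}}$ is birational onto $\Sigma_{2g-1}$ by Theorem \ref{Kum} (equivalently \cite[Theorem 4.1]{Ku00}), and $s = 2g+1 = n+2$, so the bound $s\le n+2$ holds; moreover $\mathcal{L}$ is effective, hence $t\mathcal{L}$ is effective for every $t\ge 0$, which is what is actually invoked in the proof of Proposition \ref{hilbgen}.

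Next I would compute the quantities $k_{I(r)}$ of (\ref{kdef}). Since all multiplicities are equal to $g-1$, for a multi-index $I(r)$ of cardinality $r+1$ one gets
$$k_{I(r)} = \max\{(r+1)(g-1) - rg,\, 0\} = \max\{g - r - 1,\, 0\},$$
which depends only on $r$ and not on the particular subset. In particular $k_{I(r)} = g-r-1 > 0$ exactly for $0 \le r \le g-2$, while $k_{I(r)} = 0$ for $r \ge g-1$, and these latter terms contribute $0$ to the alternating binomial sum. Hence the inner summation $\sum_{I[r]\subseteq\{1,\dots,2g+1\}}$ in Proposition \ref{hilbgen} collapses to the single factor $\binom{2g+1}{r+1}$, the number of such subsets, and the outer summation runs only over $0\le r\le g-2$.

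Substituting $k_{I(r)} = g-r-1$ together with $n = 2g-1$ and $d = g$ into the formula of Proposition \ref{hilbgen} then yields
$$h_{\Sigma_{2g-1}}(t) = \binom{gt + 2g-1}{2g-1} + \sum_{r=0}^{g-2} (-1)^{r+1}\binom{2g+1}{r+1}\binom{t(g-r-1) + 2g-1 - r - 1}{2g-1},$$
which is the asserted expression. The degree formula follows in the same way from the ``in particular'' clause of Proposition \ref{hilbgen} (or by reading off the leading coefficient of $h_{\Sigma_{2g-1}}(t)$ as the volume of $\mathcal{L}$), giving $\deg(\Sigma_{2g-1}) = g^{2g-1} + \sum_{r=0}^{g-2}(-1)^{r+1}\binom{2g+1}{r+1}(g-r-1)^{2g-1}$, and the value of $h^0(\mathbb{P}^{2g-1},\mathcal{L}_{2g-1})$ is obtained by evaluating the Hilbert polynomial at $t=1$, using $2g-1-r-1 + (g-r-1) = 3g-2r-3$. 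I do not anticipate any genuine obstacle: the only point that needs a little care is the observation that the terms with $r\ge g-1$, where $k_{I(r)}=0$, drop out of the binomial sum, so that the range of summation and the coefficients $\binom{2g+1}{r+1}$ come out exactly as stated.
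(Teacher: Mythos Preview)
Your proposal is correct and follows essentially the same approach as the paper: compute $k_{I(r)} = \max\{g-r-1,0\}$, observe that it depends only on $r$ and vanishes for $r\ge g-1$, count the $\binom{2g+1}{r+1}$ subsets for each $r$, and plug into Proposition~\ref{hilbgen}. Your write-up is in fact slightly more explicit than the paper's own proof in verifying the hypotheses (birationality via Theorem~\ref{Kum}, the bound $s = n+2$, effectivity of $\mathcal{L}$) and in explaining why the $r\ge g-1$ terms drop out.
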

\begin{proof}
By Theorem \ref{Kum} $\Sigma_{2g-1} = \overline{\sigma_g(\mathbb{P}^{2g-1})}$, where $\sigma_g$ is the map in (\ref{mapseg}). Then polynomials of degree $t\in\mathbb{N}$ on $\mathbb{P}(H^0(\mathbb{P}^{2g-1},\mathcal{L}_{2g-1})^{*})$ correspond to degree $tg$ polynomials on $\mathbb{P}^{2g-1}$ vanishing with multiplicity $t(g-1)$ at $p_1,\dots,p_{2g+1}$. Note that in the notation of (\ref{kdef}) for the linear system $t\mathcal{L}_{2g-1}$ we have 
$$
k_{I[r]}=
\left\lbrace\begin{array}{ll}
t(g-r-1) & \rm{if}\quad \textit{r}\leq \textit{g}-2\\ 
0 & \rm{if} \quad \textit{r}\geq \textit{g}-1
\end{array}\right. 
$$
Furthermore, note that in (\ref{lexdim}) we have
$$\binom{n+k_{I[r]}-r-1}{n} = \binom{2g-1+t(g-r-1)-r-1}{2g-1}\neq 0$$
for $t\gg 0$ if and only if $r\leq g-2$. 

Now, in order to conclude it is enough to observe that for any $r=0,\dots,g-2$ we have $\binom{2g+1}{r+1}$ subsets of $\{1,\dots,2g+1\}$ of the form $I[r]$. Then the formulas for the Hilbert polynomial and the degree of $\Sigma_{2g-1}$ follow from Proposition \ref{hilbgen}. In particular, the dimension of the linear system $\mathcal{L}_{2g-1}$ is then given by $h^0(\mathbb{P}^{2g-1},\mathcal{L}_{2g-1}) = h_{\Sigma_{2g-1}}(1)$.
\end{proof}

\begin{Corollary}\label{hilbodd}
Let us consider the GIT quotient $\Sigma_{2g}$ and let $n=2g+3$ be the number of points on $\P^1$ that it parametrizes.
The Hilbert polynomial of $\Sigma_{2g}$ is given by 
$$h_{\Sigma_{2g}}(t) = \binom{(2g+1)t+2g}{2g}+\sum_{r=0}^{g-1}(-1)^{r+1}\binom{2g+2}{r+1}\binom{t(2g-2r-1)+2g-r-1}{2g}$$
In particular $\deg(\Sigma_{2g})= (2g+1)^{2g}+\sum_{r=0}^{g-1}(-1)^{r+1}\binom{2g+2}{r+1}(2g-2r-1)^{2g}$.
\end{Corollary}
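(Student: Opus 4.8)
The plan is to apply Proposition \ref{hilbgen} to the linear system $\mathcal{L}_{2g} = \mathcal{L}_{2g,\,2g+1}(2g-1,\ldots,2g-1)$ with $2g+2$ general base points on $\mathbb{P}^{2g}$, exactly mirroring the proof of Corollary \ref{hilbseg}. By Theorem \ref{Kum} (equivalently, by Proposition \ref{resodd}) the rational map $\mu_g$ induced by $\mathcal{L}_{2g}$ is birational onto $\Sigma_{2g}\subset\mathbb{P}^N$, and here $n = 2g$ while $s = 2g+2 = n+2$, so the hypothesis $s\leq n+2$ of Proposition \ref{hilbgen} is satisfied. Consequently $h_{\Sigma_{2g}}(t) = h^0(\mathbb{P}^{2g}, t\mathcal{L}_{2g})$ is computed by the linear expected dimension formula (\ref{lexdim}).

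First I would compute the numbers $k_{I[r]}$ of (\ref{kdef}) for $\mathcal{L}_{2g}$. Since all multiplicities equal $2g-1$ and $d = 2g+1$, a multi-index $I[r]$ of cardinality $r+1$ contributes
$$
k_{I[r]} = \max\{(r+1)(2g-1) - r(2g+1),\,0\} = \max\{2g-1-2r,\,0\},
$$
which depends only on $r$; as $2g-1-2r$ is odd it is never zero, so $k_{I[r]}>0$ precisely when $r \leq g-1 = \lfloor\tfrac{2g-1}{2}\rfloor$. Hence in the double sum of (\ref{lexdim}) only the terms with $r\leq g-1$ survive, and for each such $r$ there are exactly $\binom{2g+2}{r+1}$ multi-indices $I[r]\subseteq\{1,\dots,2g+2\}$.

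Next, as in Corollary \ref{hilbseg}, for the system $t\mathcal{L}_{2g}$ one has $k_{I[r]}(t\mathcal{L}_{2g}) = t\,k_{I[r]}(\mathcal{L}_{2g}) = t(2g-1-2r)$ for $r\leq g-1$, and $n + tk_{I[r]} - r - 1 = 2g + t(2g-1-2r) - r - 1$. Substituting into the formula of Proposition \ref{hilbgen} and grouping the $\binom{2g+2}{r+1}$ equal terms for each $r$ yields the stated expression for $h_{\Sigma_{2g}}(t)$; extracting the leading coefficient — equivalently, invoking the degree formula $\deg(X_{\mathcal{L}}) = d^n + \sum_r\sum_{I[r]}(-1)^{r+1}k_{I[r]}^n$ of Proposition \ref{hilbgen} — gives $\deg(\Sigma_{2g}) = (2g+1)^{2g} + \sum_{r=0}^{g-1}(-1)^{r+1}\binom{2g+2}{r+1}(2g-1-2r)^{2g}$.

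The only point requiring care is the boundary case $s = n+2$: Proposition \ref{hilbgen} is stated under $s\leq n+2$, so one should make sure that the application of \cite[Theorem 4.6]{BDP16} in its proof — which gives $h^0 = \ldim$ for effective systems — indeed covers this equality. This is precisely the situation already handled in Corollary \ref{hilbseg} (there $n = 2g-1$, $s = 2g+1 = n+2$), so there is direct precedent; apart from this, the argument is routine arithmetic with binomial coefficients.
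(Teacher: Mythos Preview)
Your proposal is correct and follows essentially the same approach as the paper's own proof: apply Proposition \ref{hilbgen} to $\mathcal{L}_{2g}$, compute $k_{I[r]} = \max\{2g-1-2r,0\}$, observe that only $r\leq g-1=\lfloor(2g-1)/2\rfloor$ contributes, count the $\binom{2g+2}{r+1}$ multi-indices, and conclude. Your remark about the boundary case $s=n+2$ is a useful extra caution (the proof of Proposition \ref{hilbgen} actually cites the BDP16 result with ``$s\leq n+1$'', which looks like a slip), but as you note this is exactly the situation of Corollary \ref{hilbseg}, so the paper clearly intends and uses the $s\leq n+2$ statement.
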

\begin{proof}
By Theorem \ref{Kum} $\Sigma_{2g}\subset \mathbb{P}(H^0(\mathbb{P}^{2g},\mathcal{L}_{2g})^{*})$
is the closure of the image of the rational map induced by the linear system $\mathcal{L}_{2g}$ of degree $2g+1$ hypersurfaces in $\mathbb{P}^{2g}$ with multiplicity $2g-1$ at $p_i$ for $i = 1,\dots,2g+2$. Now, to conclude it is enough to observe that for the linear system $t\mathcal{L}_{2g}$ we have
$$
k_{I[r]}=
\left\lbrace\begin{array}{ll}
t(2g-1-2r) & \rm{if}\quad \textit{r}\leq \lfloor\frac{2\textit{g}-1}{2}\rfloor\\ 
0 & \rm{if} \quad \textit{r}> \lfloor\frac{2\textit{g}-1}{2}\rfloor
\end{array}\right. 
$$
and to argue as in the proof of Corollary \ref{hilbseg}.
\end{proof}

\section{Symmetries of GIT quotients}\label{secaut}
In this section, exploiting the arrangements of linear spaces contained in $\Sigma_{m}$ we described in Section \ref{gSeg} and \ref{linsubsodd}, we manage to compute the automorphism group of the GIT quotients $\Sigma_{2g}$ and $\Sigma_{2g-1}$. In several cases, automorphisms of moduli spaces tend to be modular, in the sense that they can be described in terms of the objects parametrized by the moduli spaces themselves. See for instance, \cite{BM13}, \cite{Ma}, \cite{MaM14}, \cite{MaM}, \cite{FM17}, \cite{Ma16}, \cite{FM17b}, \cite{Lin11}, \cite{Ro71} for moduli spaces of pointed and weighted curves, \cite{BGM13}, \cite{AFKM19} for moduli spaces of bundles over a curve, \cite{BM16} for generalized quot schemes, and \cite{Mas18}, \cite{Mas20} for spaces of complete forms. We confirm this behavior also for the GIT quotients $\Sigma_{2g}$ and $\Sigma_{2g-1}$. We would like to stress that while the results in the above cited paper rely on arguments coming from birational geometry and moduli theory, in this case we use fairly different techniques based on explicit projective geometry. While for $\Sigma_{2g-1}$ we will rely on its singularities in the case of $\Sigma_{2g}$ we will make use of the description of its effective cone in Theorem \ref{thmcd}.

\stepcounter{thm}
\subsection{Automorphisms of $\Sigma_{2g-1}$}
Recall that the variety $\Sigma_{2g-1}$ parametrizing ordered configurations of $n=2g+2$ points on $\P^1$, with the democratic polarization is singular. We will need the following technical lemma.

\begin{Lemma}\label{tangcone}
Let $p\in \Sigma_{2g-1}\subset\mathbb{P}^N$ be a singular point. The tangent cone of $\Sigma_{2g-1}$ at $p$ is a cone with vertex $p$ over the Segre product $\mathbb{P}^{g-1}\times \mathbb{P}^{g-1}$. In particular, $\Sigma_{2g-1}$ has an ordinary singularity of multiplicity $\frac{(2g-2)!}{((g-1)!)^2}$ at $p\in \Sigma_{2g-1}$.
\end{Lemma}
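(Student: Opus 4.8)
The plan is to compute the tangent cone by a local analysis of the rational map $\sigma_g$ near the linear subspace it contracts to $p$. By \ref{families} the singular point $p$ is the image $\sigma_g(\Pi)$ of a $(g-1)$-plane $\Pi=\langle p_{i_1},\dots,p_{i_g}\rangle$, and after relabelling we may take $\Pi=\langle p_1,\dots,p_g\rangle$. Writing the map as $\sigma_g=[s_0:\dots:s_N]$ with $s_1,\dots,s_N$ the sections of $\mathcal L_{2g-1}$ vanishing at $p$, one checks using (\ref{expkum}) that $s_0$ is, up to scalar, the only section nonvanishing on $\Pi$ and that $\mathcal L_{2g-1|\Pi}$ is one--dimensional, so that $\sigma_g$ is defined and constant (equal to $p$) on a dense open subset of $\Pi$. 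Since $\sigma_g$ is birational, for a general point $z\in\Pi$ the differential of $\sigma_g$ at $z$ kills $T_z\Pi$ and factors through an injective map $N_{\Pi,z}\cong\mathbb A^g\to T_p\mathbb P^N$ with $g$--dimensional image $V_z$; consequently $C_p\Sigma_{2g-1}$ is the closure of $\bigcup_{z\in\Pi}V_z$, a $(g-1)$--dimensional family of $g$--planes sweeping out the $(2g-1)$--dimensional cone.

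The next step is to make this differential explicit. The $g+1$ $g$--planes $H_k=\langle p_1,\dots,p_g,p_k\rangle$, $k=g+1,\dots,2g+1$, all contain $\Pi$, and by \ref{gplanes} the restriction $\sigma_{g|H_k}$ is the standard Cremona transformation of $H_k\cong\mathbb P^g$, which contracts its facet $\Pi$ to $p$ and maps $H_k$ isomorphically onto $\alpha_k\in\textbf{A}_p$. Writing the Cremona map in the coordinates adapted to $p_1,\dots,p_g,p_k$ shows that, approaching $z\in\Pi$ along the normal direction inside $H_k$, the image tends to $p$ along a well--defined direction in $T_p\alpha_k$, and that letting $z$ vary defines a birational map $\Pi\dasharrow\mathbb P(T_p\alpha_k)$ which is again a Cremona transformation of $\mathbb P^{g-1}$. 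The $g+1$ normal directions coming from the $H_k$'s span $N_{\Pi,z}$ with a single linear relation, so combining the above with (\ref{expkum}) yields an explicit parametrization of the cone $C_p\Sigma_{2g-1}$ together with two distinguished rulings by $g$--planes: the planes $V_z$ (constant $z$), and the planes $\alpha_1,\dots,\alpha_{g+1},\beta_1,\dots,\beta_{g+1}$ of \ref{families}.

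It then remains to recognize this configuration. The claim is that, in suitable coordinates, $T_p\mathbb P^N$ contains the space of $g\times g$ matrices and $C_p\Sigma_{2g-1}$ is exactly the locus of matrices of rank $\le 1$, i.e. the affine cone over the Segre embedding $\mathbb P^{g-1}\times\mathbb P^{g-1}\hookrightarrow\mathbb P^{g^2-1}$ given by $|\mathcal O(1,1)|$. Indeed the two families of maximal linear subspaces of a Segre cone satisfy exactly the incidence relations recorded in \ref{families}, namely $\alpha_i\cap\alpha_j=\beta_i\cap\beta_j=\{p\}$ while $\alpha_i\cap\beta_j$ is a line through $p$, which together with the parametrization above pins down the identification. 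Once this is done, the projectivized tangent cone $\mathbb P(C_p\Sigma_{2g-1})$ is the reduced, smooth Segre variety $\mathbb P^{g-1}\times\mathbb P^{g-1}$, so $p$ is an ordinary singularity and $\mult_p\Sigma_{2g-1}=\deg\bigl(\mathbb P^{g-1}\times\mathbb P^{g-1}\bigr)=\binom{2g-2}{g-1}=\frac{(2g-2)!}{((g-1)!)^2}$.

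The main obstacle is the middle step: one must control the differential of $\sigma_g$ transverse to $\Pi$ uniformly over all of $\Pi$, and prove that the resulting union of $g$--planes is genuinely the Segre cone and not a degeneration or a linear projection of it (either of which could lower the degree, hence the claimed multiplicity). I expect this to follow from the Cremona computation on the $H_k$'s combined with (\ref{expkum}), a dimension count showing that $C_p\Sigma_{2g-1}$ spans a $\mathbb P^{g^2-1}$ (matching $h^0(\mathcal O(1,1))=g^2$), and the rigidity imposed by the incidence relations; the case $g=2$, where $\Sigma_3$ is the Segre cubic, $\Pi$ is a line and $p$ is a node with tangent cone the quadric cone over $\mathbb P^1\times\mathbb P^1$ of multiplicity $2=\binom{2}{1}$, serves as a consistency check.
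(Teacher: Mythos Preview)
The paper does not prove this lemma from scratch: it simply invokes \cite[Lemma~4.3]{HMSV09}, where the tangent cone is read off from the invariant-theoretic presentation of the graded coordinate ring of $\Sigma_{2g-1}$ at a strictly semistable point. Your approach---working on the source side via the rational map $\sigma_g$ and the $g$-planes $H_k\supset\Pi$---is therefore genuinely different and more geometric, and your picture of the two rulings of the Segre arising from the families $\textbf{A}_p$ and $\textbf{B}_p$ of \ref{families} is the right one.

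That said, the proposal is a sketch and the gap you flag in the last paragraph is real. Two points deserve emphasis. First, the incidence relations of \ref{families} together with a span count do not characterize the Segre variety among $(2g-2)$-folds in $\mathbb P^{g^2-1}$: degenerations or linear projections of the Segre can satisfy the same combinatorics, so ``rigidity imposed by the incidence relations'' is not an argument by itself. To make the identification you must actually carry out the Cremona computation on the $H_k$'s and exhibit the map $\Pi\times\mathbb P(N_{\Pi/\mathbb P^{2g-1}})\dasharrow\mathbb P(C_p\Sigma_{2g-1})$ explicitly as the Segre map, or equivalently produce the $2\times2$ minors cutting out the tangent cone from the leading terms of the sections in (\ref{expkum}). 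Second, even once you know the \emph{set} $\overline{\bigcup_z V_z}$ is the cone over the Segre, the multiplicity formula $\mult_p\Sigma_{2g-1}=\deg(\mathbb P^{g-1}\times\mathbb P^{g-1})$ requires the tangent cone to be reduced (this is part of what ``ordinary'' means), and your differential argument only detects the underlying reduced scheme. The algebraic route in \cite{HMSV09}, which identifies the leading terms of the defining equations, handles both issues at once; your geometric route can be completed, but it needs the explicit parametrization rather than the incidence data alone.
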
 
\begin{proof}
The statement follows from \cite[Lemma 4.3]{HMSV09}.
\end{proof}

\begin{Corollary}\label{linaut}
Any automorphism of the GIT quotient $\Sigma_{2g-1}\subset\mathbb{P}^N$ is induced by an automorphism of $\mathbb{P}^N$.
\end{Corollary}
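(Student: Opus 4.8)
The plan is to show that $\Sigma_{2g-1}\subset\mathbb{P}^N$ is embedded by the \emph{complete} linear system $|\mathcal{O}_{\Sigma_{2g-1}}(1)|$, so that any abstract automorphism of $\Sigma_{2g-1}$ must permute the sections of this complete system and hence extend linearly. Concretely, an automorphism $\varphi\in\Aut(\Sigma_{2g-1})$ pulls back the ample generator $H$ of $\Pic(\Sigma_{2g-1})\cong\mathbb{Z}\langle H\rangle$ (Proposition \ref{picpari}) to another ample generator, which must again be $H$ since $H$ is the unique effective generator; therefore $\varphi^*\mathcal{O}_{\Sigma_{2g-1}}(1)\cong\mathcal{O}_{\Sigma_{2g-1}}(1)$, and $\varphi$ induces a linear automorphism of $\mathbb{P}(H^0(\Sigma_{2g-1},\mathcal{O}(1))^*)$. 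The remaining point is that this projective space is exactly $\mathbb{P}^N$, i.e. the natural embedding is projectively normal in degree one, equivalently $H^0(\mathbb{P}^N,\mathcal{O}(1))\to H^0(\Sigma_{2g-1},\mathcal{O}_{\Sigma_{2g-1}}(1))$ is an isomorphism.

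First I would record that $\Pic(\Sigma_{2g-1})=\mathbb{Z}\langle H\rangle$ with $H$ very ample by construction (it is the restriction of $\mathcal{O}_{\mathbb{P}^N}(1)$), and that $\Sigma_{2g-1}$ is non-degenerate in $\mathbb{P}^N$, so the restriction map on global sections of $\mathcal{O}(1)$ is injective with image of dimension at least $N+1$; combined with $h^0(\Sigma_{2g-1},\mathcal{O}(1))=h^0(\mathbb{P}^{2g-1},\mathcal{L}_{2g-1})=N+1$ (this is how $N$ was defined, see the paragraph before Theorem \ref{Kum} and Corollary \ref{hilbseg}), it follows that the restriction map is an isomorphism. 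Then I would argue: given $\varphi\in\Aut(\Sigma_{2g-1})$, by the Picard group computation $\varphi^*H\sim H$, so $\varphi$ acts on $H^0(\Sigma_{2g-1},\mathcal{O}(1))\cong H^0(\mathbb{P}^N,\mathcal{O}(1))$, inducing $\widetilde\varphi\in\mathrm{PGL}(N+1)$; since the embedding $\Sigma_{2g-1}\hookrightarrow\mathbb{P}^N$ is defined by this complete linear system, $\widetilde\varphi$ restricts to $\varphi$ on $\Sigma_{2g-1}$, which is the assertion.

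An alternative, more "projective-geometry" route — consistent with the surrounding section, which emphasizes the singular locus — is to invoke Lemma \ref{tangcone}: the singular locus $\Sing(\Sigma_{2g-1})$ consists of $\binom{2g+1}{g}$ ordinary points whose tangent cones are cones over $\mathbb{P}^{g-1}\times\mathbb{P}^{g-1}$. Any automorphism $\varphi$ permutes these singular points. One then shows these $\binom{2g+1}{g}$ points span $\mathbb{P}^N$ (by the explicit section description in \ref{basis}, at least $N+2$ of the distinguished/singular points are in linear general position), and that $\varphi$ is determined by a projectivity because it must match the projective structure near each singular point; this again forces $\varphi$ to be the restriction of an element of $\mathrm{PGL}(N+1)$. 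Either way, the conclusion is that $\Aut(\Sigma_{2g-1})=\Aut(\Sigma_{2g-1},\mathcal{O}(1))\cap\mathrm{PGL}(N+1)$, i.e. every automorphism is linear.

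The main obstacle is verifying that the natural embedding is by a \emph{complete} linear system, i.e. $h^0(\Sigma_{2g-1},\mathcal{O}_{\Sigma_{2g-1}}(1))=N+1$ with no "extra" sections beyond those cut out by hyperplanes of $\mathbb{P}^N$. This is essentially built into Kumar's construction (the ambient $\mathbb{P}^N$ is by definition $\mathbb{P}(H^0(\mathbb{P}^{2g-1},\mathcal{L}_{2g-1})^*)$, and $\Sigma_{2g-1}$ is the closure of the image of $\phi_{\mathcal{L}_{2g-1}}$), together with the fact — implicit in Proposition \ref{res} and the birationality statement of Theorem \ref{Kum} — that pulling back hyperplane sections along $\sigma_g$ recovers all of $\mathcal{L}_{2g-1}$ and conversely $\sigma_g^*$ identifies $H^0(\Sigma_{2g-1},\mathcal{O}(1))$ with $H^0(\mathbb{P}^{2g-1},\mathcal{L}_{2g-1})$. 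Once this identification is in hand, the rest is formal. I would therefore write the proof as: "$\Sigma_{2g-1}\subset\mathbb{P}^N$ is embedded by a complete linear system and $\Pic(\Sigma_{2g-1})=\mathbb{Z}\langle H\rangle$ by Proposition \ref{picpari}; hence any automorphism fixes $\mathcal{O}_{\Sigma_{2g-1}}(1)$, acts on its (complete) space of sections, and so extends to $\mathbb{P}^N$."
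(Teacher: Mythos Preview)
Your proposal is correct and follows essentially the same strategy as the paper: show that $\varphi^*\mathcal{O}_{\Sigma_{2g-1}}(1)\cong\mathcal{O}_{\Sigma_{2g-1}}(1)$, and conclude that $\varphi$ is linear. The only difference is in how this isomorphism is obtained. You invoke Proposition~\ref{picpari} ($\Pic(\Sigma_{2g-1})=\mathbb{Z}\langle H\rangle$) and use that $\varphi^*H$ must be the unique ample generator; the paper instead uses Lemma~\ref{can} ($K_{\Sigma_{2g-1}}\cong\mathcal{O}(-2)$) together with the torsion-freeness of $\Pic$ from Proposition~\ref{picsegre}: from $\varphi^*K\sim K$ one gets $2(\varphi^*H-H)\sim 0$, hence $\varphi^*H\sim H$. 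Your route is marginally more direct since Proposition~\ref{picpari} already subsumes the torsion-freeness statement; the paper's route has the mild advantage of working as soon as one knows $\Pic$ is torsion-free, without the full rank computation. You are also more explicit than the paper about why the embedding is by a \emph{complete} linear system, a step the paper leaves implicit.
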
 
\begin{proof}
Recall that, by Proposition \ref{picsegre}, $\Pic(\Sigma_{2g-1})$ is torsion free. Let $\phi$ be an automorphism of $\Sigma_{2g-1}$. Then $\phi^{*}K_{\Sigma_{2g-1}}\sim K_{\Sigma_{2g-1}}$. Lemma \ref{can} yields that $\phi^{*}\mathcal{O}_{\Sigma_{2g-1}}(1)\sim \mathcal{O}_{\Sigma_{2g-1}}(1)$, that is $\phi$ is induced by an automorphism of $\mathbb{P}^N$.
\end{proof}
 
\begin{thm}\label{thaut}
The automorphism group of the GIT quotient $\Sigma_{2g-1}$ is the symmetric group on $2g+2$ elements:
$$\Aut(\Sigma_{2g-1})\cong S_{2g+2}$$
for any $g\geq 2$.
\end{thm}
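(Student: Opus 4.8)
The plan is to show the inclusion $S_{2g+2}\hookrightarrow\Aut(\Sigma_{2g-1})$ is an equality. One direction is already available: by Remark \ref{cremodd} the symmetric group $S_{2g+2}$ embeds into $\Aut(\Sigma_{2g-1})$, generated by the permutations of the $2g+2$ base points $p_1,\dots,p_{2g+1}\in\mathbb{P}^{2g-1}$ together with the standard Cremona transformation $\psi_{2g-1}$ of Proposition \ref{autcrem}. Equivalently, this is the group of permutations of the $n=2g+2$ marked points. So it remains to prove that every automorphism of $\Sigma_{2g-1}$ comes from such a permutation.

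By Corollary \ref{linaut} every $\phi\in\Aut(\Sigma_{2g-1})$ is the restriction of a linear automorphism of $\mathbb{P}^N$. The key is that a projective automorphism must preserve all intrinsically defined projective-geometric data of $\Sigma_{2g-1}\subset\mathbb{P}^N$; I would use the singular locus and the configuration of $g$-planes from Sections \ref{gSeg}. First, $\phi$ permutes the $\binom{2g+1}{g}$ singular points of $\Sigma_{2g-1}$, since by Lemma \ref{tangcone} these are exactly the points where $\Sigma_{2g-1}$ has an ordinary singularity of the given multiplicity (the smooth locus is preserved). Next, $\phi$ permutes the two families $\textbf{A}_p,\textbf{B}_p$ of $g$-planes through each singular point $p$ described in \ref{families}, because membership in these families is detected by the incidence pattern: the $\alpha_i$ meet pairwise only in $p$, likewise the $\beta_j$, while an $\alpha_i$ meets a $\beta_j$ in a line joining $p$ to another singular point. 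A projective map preserves linear spans, dimensions and incidences, so it carries this combinatorial structure to itself. The upshot is that $\phi$ induces an automorphism of the abstract incidence structure whose "points" are the $\binom{2g+1}{g}$ singular points and whose "blocks" are the $g$-planes in the families $\textbf{A},\textbf{B}$, and this structure is visibly isomorphic to the structure of $g$-subsets (or their complementary $(g+1)$-subsets) of the index set $\{1,\dots,2g+1\}$, i.e. the combinatorics governing configurations of $2g+2$ points. Identifying the automorphism group of that incidence structure with $S_{2g+2}$ then forces $\phi$ to act as a permutation of the marked points.

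Concretely I would argue as follows. Using the explicit coordinates of \ref{basis}, where $N+2$ of the distinguished singular points of $\Sigma_{2g-1}$ are placed in linear general position in $\mathbb{P}^N$ (at $[1:0:\cdots:0],\dots,[0:\cdots:0:1],[1:\cdots:1]$), an automorphism $\phi$ of $\Sigma_{2g-1}$ that fixes these $N+2$ general points is the identity on $\mathbb{P}^N$, hence on $\Sigma_{2g-1}$. Therefore $\Aut(\Sigma_{2g-1})$ injects into the group of permutations of the singular points that are realized by linear maps preserving the $g$-plane configuration; combining with the previous paragraph, this group is contained in the group of permutations of $\{1,\dots,2g+1\}$ extended by the Cremona involution, which is $S_{2g+2}$. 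Since we already have $S_{2g+2}\subseteq\Aut(\Sigma_{2g-1})$, equality follows.

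The main obstacle I expect is the bookkeeping that turns the incidence combinatorics of the $g$-planes into a clean identification with $S_{2g+2}$: one has to be careful that an a priori projective symmetry respects not only the singular points and the two families of $g$-planes but enough of the finer incidences (for instance which singular points lie on which $g$-plane, and the pairing of a $g$-plane with its complementary plane) to pin the permutation down uniquely, rather than only up to some larger symmetry group of a coarser structure. The case $g=2$, where $\Sigma_3$ is the Segre cubic and the answer $S_6$ is classical, is a useful consistency check and can be cited to anchor the induction-free argument.
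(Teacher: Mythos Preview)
Your overall strategy coincides with the paper's: use Corollary \ref{linaut} to reduce to projective automorphisms, exploit the singular locus and the configuration of $g$-planes from \ref{families}, and conclude via the $N+2$ singular points in linear general position constructed in \ref{basis}. However, there is a genuine gap at the step where you assert that $\phi$ must permute the families $\textbf{A}_p\cup\textbf{B}_p$. You justify this by saying that ``membership in these families is detected by the incidence pattern'', but that pattern only distinguishes the $\alpha_i$'s from the $\beta_j$'s \emph{among the $g$-planes already listed}; it does not rule out that $\phi$ sends some $\alpha_i$ to a $g$-plane through $\phi(p)$ lying in $\Sigma_{2g-1}$ that is not on your list. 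The paper spends its first two paragraphs on exactly this point: using Lemma \ref{tangcone}, any $g$-plane $H\subset\Sigma_{2g-1}$ through $p$ must lie in the tangent cone, hence correspond to a $(g-1)$-plane of the form $\{pt\}\times\mathbb{P}^{g-1}$ or $\mathbb{P}^{g-1}\times\{pt\}$ in the Segre variety; a further argument (via the resolution in Proposition \ref{resseg} and the Cremona involution) then shows that an $H$ having the prescribed incidences with the known families is forced to be one of the $\alpha_i$ or $\beta_j$. Without this classification your claim that $\phi$ stabilizes $\textbf{A}_p\cup\textbf{B}_p$ is unjustified.

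There is also a difference in how the bound $|\Aut(\Sigma_{2g-1})|\le (2g+2)!$ is obtained. You propose to identify the automorphism group of the abstract incidence structure with $S_{2g+2}$, and you yourself flag this as the main obstacle; indeed it is not ``visible'' without further work. The paper avoids this entirely by a direct count: once one knows $\phi$ sends $\textbf{A}_p\cup\textbf{B}_p$ to $\textbf{A}_{\phi(p)}\cup\textbf{B}_{\phi(p)}$, there are $\binom{2g+1}{g}$ choices for $\phi(p)$ and $2((g+1)!)^2$ ways to match the families, giving $2((g+1)!)^2\binom{2g+1}{g}=(2g+2)!$. To see that an automorphism fixing $p$ and fixing each $\alpha_i,\beta_j$ setwise is the identity, the paper does not simply invoke the $N+2$ general points directly: it first argues that such a $\phi$ fixes all the $q_{ij}$'s, hence restricts to the identity on suitable $\alpha_{g+1},\beta_{g+1}$, and then \emph{propagates recursively} through the configurations $\textbf{A}_q\cup\textbf{B}_q$ at neighboring singular points $q$ until every singular point is fixed; only then does linear general position of $N+2$ of them finish the proof. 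Your sketch collapses this propagation into a single sentence, which hides a nontrivial step.
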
 
\begin{proof}
Let $p\in \Sigma_{2g-1}\subset\mathbb{P}^N$ be a singular point, and $H$ a $g$-plane inside $\Sigma_{2g-1}$ passing through $p$. Then $H$ is contained in the tangent cone of $\Sigma_{2g-1}$ at $p$, and by Lemma \ref{tangcone} $H$ must be a linear space generated by $p$ and a $(g-1)$-plane in the Segre embedding of $\mathbb{P}^{g-1}\times \mathbb{P}^{g-1}$, and such a $(g-1)$-plane must be either of the form $\{pt\}\times \mathbb{P}^{g-1}$ or of the form $\mathbb{P}^{g-1}\times\{pt\}$.

Assume that $H$ intersects the $g$-planes of the family $\textbf{A}_p$ in the lines $\left\langle p, q_{ij}\right\rangle$, where $q_{ij} = \alpha_i\cap \beta_j$ is a singular point of $\Sigma_{2g-1}$, and the $g$-planes of the family $\textbf{B}_p$ in $p$. Our aim is to prove that then $H$ must be one of the $\beta_j$.

By Proposition \ref{resseg}, the resolution $\widetilde{\sigma}_g$ has a modular interpretation in terms of the reduction morphism $\rho_{\widetilde{A}[2g+2],A[2g+2]}:\cM_{0,A[2g+2]}\rightarrow \cM_{0,\widetilde{A}[2g+2]}$. The only $g$-planes in $\Sigma_{2g-1}$ that are images of subvarieties contained in the exceptional locus of the blow-up $f:\cM_{0,A[2g+2]}\rightarrow\mathbb{P}^{2g-1}$ are the $\widetilde{\sigma}_g(E_I^{g-1})$ described in Section \ref{gplanes}. Therefore, we may assume that $H$ is the closure of the image via $\sigma_g$ of a $g$-dimensional variety $Z\subset\mathbb{P}^{2g-1}$. We may assume that $p = \sigma_g(\left\langle p_1,\dots,p_g\right\rangle)$, and consider $\Pi = \sigma_g(\left\langle p_{g+1},\dots,p_{2g+1}\right\rangle)$. Let $\xi_i = \sigma_g(\left\langle p_{g+1},\dots,\hat{p}_i,\dots,p_{2g+1}\right\rangle)$ be the other $g+1$ singular points of $\Sigma_{2g-1}$ lying on $\Pi$, and denote by $\pi_p:\mathbb{TC}_p\Sigma_g\dasharrow \mathbb{P}^{g-1}\times\mathbb{P}^{g-1}$ the projection from the tangent cone of $\Sigma_{2g-1}$ at $p$ onto its base, and let $\pi_i:\mathbb{P}^{g-1}\times\mathbb{P}^{g-1}\rightarrow\mathbb{P}^{g-1}$ be the projection onto the factors for $i = 1,2$. Since we are assuming that $H$ intersects the $g$-planes of the family $\textbf{A}_p$ in the lines $\left\langle p, q_{ij}\right\rangle$ we have that $H$ must be of the form $(\pi_1\circ\pi_p)^{-1}((\pi_1\circ\pi_p)(\xi_i))$ for some $i = g+1,\dots,2g+1$. Therefore, $Z$ is of the form $\left\langle p_1,\dots,p_g,p_i\right\rangle$ for some $i = g+1,\dots,2g+1$. Hence $\sigma_{g|Z}:Z\dasharrow H$ is the standard Cremona transformation of $\mathbb{P}^g$ and $H$ is one of the $\beta_i$.

Now, assume that $H$ is a $g$-plane inside $\Sigma_{2g-1}$ through $p$ intersecting the $g$-planes of the family $\textbf{B}_p$ in the line $\left\langle p, q_{ij}\right\rangle$, where $q_{i,j} = \alpha_i\cap \beta_j$ is a singular point of $\Sigma_{2g-1}$, and that it also intersects the planes of the family $\textbf{A}_p$ in $p$. By Proposition \ref{autcrem}, the standard Cremona transformation of $\mathbb{P}^{2g-1}$ induces an automorphism $\phi_{Cr}:\Sigma_{2g-1}\rightarrow \Sigma_{2g-1}$, set $q = \phi_{Cr}(p)$. Note that $\phi_{Cr}$ maps the family $\textbf{A}_p$ to the family $\textbf{B}_q$, and the family $\textbf{B}_p$ to the family $\textbf{A}_q$. Since in our argument $p$ is an arbitrary singular point of $\Sigma_{2g-1}$, proceeding as in the previous case we can show that $H$ must be one of the $\alpha_i$.

Clearly, since $\Sigma_{2g-1}\cong\cM_{0,\widetilde{A}[2g+2]}$ with symmetric weights $A[2g+2] = \left(\frac{1}{g+1},\dots,\frac{1}{g+1}\right)$, the symmetric group $S_{2g+2}$ acts on $\Sigma_{2g-1}$ by permuting the marked points. Our aim is now to show that $\Sigma_{2g-1}$ has at most $(2g+2)!$ automorphisms. 

Now, let $\phi:\Sigma_{2g-1}\rightarrow \Sigma_{2g-1}$ be an automorphism. If $p\in \Sigma_{2g-1}$ is a singular point then $\phi(p)$ must be a singular point as well. Therefore, we have at most $|\Sing(\Sigma_{2g-1})| = \binom{2g+1}{g}$ choices for the image of $p$.  

By Corollary \ref{linaut} $\phi$ is induced by a linear automorphism of the ambient projective space. Then $\phi$ must map $g$-planes through $p$ to $g$-planes through $\phi(p)$. In particular, since $\phi$ stabilizes $\Sing(\Sigma_{2g-1})$, it maps $\textbf{A}_p\cup \textbf{B}_p$ to $\textbf{A}_{\phi(p)}\cup \textbf{B}_{\phi(p)}$. In order to do this, we have the following $2((g+1)!)^2$ possibilities:
$$
\left\lbrace\begin{array}{c}
\alpha_{i,p}\mapsto \alpha_{j,\phi(p)} \\ 
\beta_{i,p}\mapsto \beta_{k,\phi(p)}
\end{array}\right.\quad
\rm{or}\quad
\left\lbrace\begin{array}{c}
\alpha_{i,p}\mapsto \beta_{j,\phi(p)} \\ 
\beta_{i,p}\mapsto \alpha_{k,\phi(p)}
\end{array}\right.
$$
Summing up, we have 
$$2((g+1)!)^2\binom{2g+1}{g} = (2g+2)!$$
possibilities. Now, assume that $\phi(p)=p$, and that $\phi$ maps $\alpha_{i,p}$ to $\alpha_{i,p}$, and $\beta_{i,p}$ to $\beta_{i,p}$ for any $i = 1,\dots,g+1$. Then $\phi$ must fix all the singular points $q_{i,j}$ determined by $\textbf{A}_p\cup \textbf{B}_p$.

Now, let us take into account one of these points, say $q_{g+1,g+1}$. Since $\phi$ fixes $g+2$ nodes in linear general position on $\alpha_{g+1}$, and $g+2$ nodes in linear general position on $\beta_{g+1}$ we have that $\phi$ is the identity on both $\alpha_{g+1}$ and $\beta_{g+1}$. On the other hand, $\alpha_{g+1}$ and $\beta_{g+1}$ are elements of the configuration $\textbf{A}_{q_{g+1}}\cup \textbf{B}_{q_{g+1}}$, therefore all the singular points of $\Sigma_{2g-1}$ determined by $\textbf{A}_{q_{g+1}}\cup \textbf{B}_{q_{g+1}}$ are fixed by $\phi$ as well. Proceeding recursively we see that $\phi$ must then fix all the singular points of $\Sigma_{2g-1}\subset\mathbb{P}^{N}$. Note that, with respect to the expression for $\sigma_g$ given in \ref{basis}, the points $[1:0\dots:0],[0:\dots:0:1],[1:\dots:1]\in \Sigma_{2g-1}\subset \mathbb{P}(H^{0}(\mathbb{P}^{2g-1},\mathcal{L}_{2g-1}^{*})) = \mathbb{P}^N$ are singular points.

Hence the automorphism of $\mathbb{P}^N$ inducing $\phi$ fixes $N+2$ points in linear general position. Therefore, it is the identity and then $\phi = Id_{\Sigma_{2g-1}}$.  
\end{proof}
 
\stepcounter{thm}
\subsection{Automorphisms of $\Sigma_{2g}$} 
Recall that $\Sigma_{2g}$ parametrizes ordered configurations of $n=2g+3$ points on $\P^1$, with the democratic polarization. By Theorem \ref{Kum}, $\Sigma_{2g}\subset \mathbb{P}(H^0(\mathbb{P}^{2g},\mathcal{L}_{2g})^{*})$
is the closure of the image of the rational map induced by the linear system $\mathcal{L}_{2g}$, see also Section \ref{evenio}. We denote by $\mu_g:\mathbb{P}^{2g}\dasharrow \Sigma_{2g}\subset \mathbb{P}(H^0(\mathbb{P}^{2g},\mathcal{L}_{2g})^{*}) = \mathbb{P}^N$ the rational map induced by this linear system.

\begin{Lemma}\label{linautodd}
Any automorphism of $\Sigma_{2g}\subset\mathbb{P}^N$ is induced by an automorphism of $\mathbb{P}^N$.
\end{Lemma}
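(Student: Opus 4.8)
The plan is to reproduce, almost verbatim, the argument used for $\Sigma_{2g-1}$ in Corollary \ref{linaut}, with Lemma \ref{canodd} playing the role of Lemma \ref{can}. The underlying point is that the canonical class of $\Sigma_{2g}$ is proportional to the hyperplane class of its natural embedding, hence preserved by every automorphism, which forces the automorphism to be projective-linear.

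First I would recall from Section \ref{evenio} that for $n=2g+3$ every semistable configuration is stable, so $\Sigma_{2g}$ is a \emph{smooth} projective variety; in particular $\Pic(\Sigma_{2g})$ is torsion free and, for any automorphism $\phi$ of $\Sigma_{2g}$, one has $\phi^{*}K_{\Sigma_{2g}}\sim K_{\Sigma_{2g}}$. By Lemma \ref{canodd}, $K_{\Sigma_{2g}}\cong\mathcal{O}_{\Sigma_{2g}}(-1)$, where $\mathcal{O}_{\Sigma_{2g}}(1)$ is the hyperplane class of the embedding $\Sigma_{2g}\subset\mathbb{P}^N=\mathbb{P}(H^0(\mathbb{P}^{2g},\mathcal{L}_{2g})^{*})$ furnished by Theorem \ref{Kum}. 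Combining these two facts gives $\phi^{*}\mathcal{O}_{\Sigma_{2g}}(1)\cong\mathcal{O}_{\Sigma_{2g}}(1)$.

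Next I would check that this embedding is linearly normal, so that $\phi^{*}$ acts as a genuine linear automorphism of the space of linear forms. Pulling back along the birational morphism $\widetilde{\mu}_g:X^{2g}_{g-1}\to\Sigma_{2g}$ of Proposition \ref{resodd} and using $(\widetilde{\mu}_g)_{*}\mathcal{O}_{X^{2g}_{g-1}}=\mathcal{O}_{\Sigma_{2g}}$ together with the blow-down $f:X^{2g}_{g-1}\to\mathbb{P}^{2g}$, one identifies $\widetilde{\mu}_g^{*}\mathcal{O}_{\Sigma_{2g}}(1)$ with the strict transform of $\mathcal{L}_{2g}$ on $X^{2g}_{g-1}$ and hence obtains $h^{0}(\Sigma_{2g},\mathcal{O}_{\Sigma_{2g}}(1))=h^{0}(\mathbb{P}^{2g},\mathcal{L}_{2g})=N+1$. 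Therefore the isomorphism $\phi^{*}\mathcal{O}_{\Sigma_{2g}}(1)\cong\mathcal{O}_{\Sigma_{2g}}(1)$ induces a linear automorphism of $H^{0}(\Sigma_{2g},\mathcal{O}_{\Sigma_{2g}}(1))\cong\mathbb{C}^{N+1}$, which is exactly the space of hyperplane sections of $\Sigma_{2g}\subset\mathbb{P}^N$; so $\phi$ is the restriction of an automorphism of $\mathbb{P}^N$ stabilizing $\Sigma_{2g}$.

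The only step that is not purely formal is this linear normality claim — that passing from $\mathbb{P}^{2g}$ to $X^{2g}_{g-1}$ creates no extra global sections — and I expect it to be the main (mild) obstacle; it is taken care of by the modular/reduction–morphism description of $\widetilde{\mu}_g$ in Proposition \ref{resodd}. If one prefers to bypass it altogether, note that very ampleness of $\mathcal{O}_{\Sigma_{2g}}(1)$ together with $\phi^{*}\mathcal{O}_{\Sigma_{2g}}(1)\cong\mathcal{O}_{\Sigma_{2g}}(1)$ already forces $\phi$ to be induced by a projective-linear transformation of the projective space of that complete linear system, and this is precisely the ambient $\mathbb{P}^N$.
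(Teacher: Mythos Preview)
Your proposal is correct and follows essentially the same approach as the paper: the paper's proof simply notes that $\Pic(\Sigma_{2g})\cong\mathbb{Z}^{2g+3}$ (hence torsion free) by \ref{small} and then says to argue exactly as in Corollary \ref{linaut}, replacing Lemma \ref{can} by Lemma \ref{canodd}. Your extra discussion of linear normality is more careful than the paper, which leaves that step implicit.
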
 
\begin{proof}
Recall that by \ref{small} $\Pic(\Sigma_{2g})\cong\mathbb{Z}^{2g+3}$. Then it is enough to argue as in the proof of Corollary \ref{linaut} using Lemma \ref{canodd} instead of Lemma \ref{can}.
\end{proof}

\begin{thm}\label{thautodd}
The automorphism group of the GIT quotient $\Sigma_{2g}$ is the symmetric group on $n$ elements:
$$\Aut(\Sigma_{2g})\cong S_{2g+3}$$
for any $g\geq 1$.
\end{thm}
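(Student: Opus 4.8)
The plan is to bound $|\Aut(\Sigma_{2g})|$ from below by exhibiting a faithful $S_{2g+3}$--action and from above by $(2g+3)!$ by letting automorphisms act on the effective cone; since the two bounds agree, this forces $\Aut(\Sigma_{2g})\cong S_{2g+3}$. For the lower bound, $S_{2g+3}$ acts on $(\mathbb{P}^1)^{2g+3}$ by permuting the factors; this commutes with the diagonal $PGL(2)$--action and preserves the symmetric polarization, so it descends to $\Sigma_{2g}$, and the action is faithful because on the dense open subset $\mathcal{M}_{0,2g+3}\subset\Sigma_{2g}$ it is the standard action of $S_{2g+3}$ on configurations, which is faithful since $2g+3\geq 5$ (one may also invoke Remark \ref{cremodd}). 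Hence $S_{2g+3}\hookrightarrow\Aut(\Sigma_{2g})$.

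\textbf{The homomorphism to $S_{2g+3}$.} Given $\phi\in\Aut(\Sigma_{2g})$, pullback $\phi^*$ is a lattice automorphism of $\Pic(\Sigma_{2g})\cong\mathbb{Z}^{2g+3}$ preserving the (pointed, full--dimensional) cone $\Eff(\Sigma_{2g})$. Through the small modification $\psi$ of \ref{small} we identify $\Eff(\Sigma_{2g})$ with $\Eff(X^{2g}_{2g+2})$, which by Theorem \ref{thmcd} (equivalently Lemma \ref{eff}) has exactly $2g+3$ facets, namely $\{y+x_i=0\}$ for $i=1,\dots,2g+2$ together with $\{2gy+\sum_i x_i=0\}$. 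Since an automorphism of a polyhedral cone permutes its facets, we obtain a homomorphism $\rho\colon\Aut(\Sigma_{2g})\to S_{2g+3}$. It then suffices to prove that $\rho$ is injective: together with the inclusion of the first paragraph this yields $(2g+3)!\leq|\Aut(\Sigma_{2g})|\leq (2g+3)!$, hence $\Aut(\Sigma_{2g})\cong S_{2g+3}$.

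\textbf{Injectivity of $\rho$.} Let $\phi\in\ker\rho$. Then $\phi^*$ fixes every facet of $\Eff(\Sigma_{2g})$, hence fixes every extremal ray (each being the intersection of the facets through it), hence fixes the primitive generator of each extremal ray; as these generators span $\Pic(\Sigma_{2g})_{\mathbb{R}}$ we get $\phi^*=\mathrm{id}$ on $\Pic(\Sigma_{2g})$ and dually $\phi_*=\mathrm{id}$ on $N_1(\Sigma_{2g})$. Now I would show $\phi$ fixes each $g$--plane $H_I\in\textbf{C}\cup\textbf{D}$ of \ref{confodd}. By Theorem \ref{ful} the classes $L_I$ of lines in the $H_I$ are exactly the pairwise distinct extremal rays of $\NE(\Sigma_{2g})$, and each is primitive since $\mathcal{O}_{\Sigma_{2g}}(1)\cdot L_I=1$; moreover $L_I$ is contracted by the forgetful fibration $\pi_i\colon\Sigma_{2g}\to\Sigma_{2g-1}$ for any $i\notin I$, with $H_I$ a component of a reducible fiber (Section \ref{fibrations}). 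Fix such an $i$: an irreducible curve $C$ with $[C]=L_I$ is $\pi_i$--contracted, so it lies in a fiber; a general fiber is an irreducible $\mathbb{P}^1$ of class $L_I+L_{I^c}\neq L_I$, while a special fiber is a union $H_J\cup H_{J^c}$ whose components have classes $L_J,L_{J^c}$, so if $C$ lies on a component $H_J\cong\mathbb{P}^g$ then $[C]=\deg(C)\,L_J=L_I$ forces $\deg(C)=1$ and $J=I$. Thus $C$ is a line in $H_I$, so $H_I=\bigcup\{\,C\text{ irreducible}\ :\ [C]=L_I\,\}$ is determined by the numerical class $L_I$, and $\phi(H_I)=H_I$ for all $I$. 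Consequently $\phi$ fixes every distinguished point of $\textbf{C}\cup\textbf{D}$, each of which is the single intersection point of two of the $H_I$. By Lemma \ref{linautodd} $\phi$ is induced by a linear automorphism of $\mathbb{P}^N$, and by Remark \ref{genposodd} there are $N+2$ such distinguished points in linear general position in $\mathbb{P}^N$; a linear automorphism fixing $N+2$ points in general position is the identity, so $\phi=\mathrm{id}_{\Sigma_{2g}}$.

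\textbf{Main obstacle.} The delicate step is the recovery of the $g$--plane $H_I$ from its line class $L_I$, i.e. the claim that every irreducible curve of numerical class $L_I$ lies on $H_I$; this is where the complete list of extremal rays of $\NE(\Sigma_{2g})$ (Theorem \ref{ful}), the classification of the fibers of the forgetful fibrations $\pi_i$ (Section \ref{fibrations}), and the primitivity and pairwise distinctness of the $L_I$ all have to be used together. Once this is established, the rigidity furnished by $N+2$ points in general position (Remark \ref{genposodd}) closes the argument exactly as in the treatment of $\Sigma_{2g-1}$ above.
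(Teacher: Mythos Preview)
Your proof is correct and reaches the same conclusion as the paper, but the route is genuinely different in its middle section, so a brief comparison is worthwhile.

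\textbf{Where the arguments agree.} Both proofs obtain the lower bound from the natural $S_{2g+3}$--action, both invoke Lemma \ref{linautodd} to make every automorphism linear, and both close with Remark \ref{genposodd}: once an automorphism fixes all the distinguished points of the configuration $\textbf{C}\cup\textbf{D}$, it fixes $N+2$ points in general position in $\mathbb{P}^N$ and is the identity.

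\textbf{Where they diverge.} The paper proves the upper bound by first observing that a pseudo--automorphism of $X^{2g}_1$ must permute the extremal \emph{rays} of $\Eff(X^{2g}_1)$, hence permutes the divisors $E_i$ and $H-\sum_{i\in I}E_i$ themselves. Since each $g$--plane $\gamma_j\in\textbf{C}$ (resp.\ $\delta_j\in\textbf{D}$) is an intersection of such divisors, $\phi$ permutes the $g$--planes of $\textbf{C}\cup\textbf{D}$; a direct combinatorial count of how $\phi$ can act on the distinguished points on a pair $H_1\cup H_2$ then gives the bound $(2g+3)!$.

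You instead use the \emph{facets} of $\Eff(\Sigma_{2g})$ (there are exactly $2g+3$, by Theorem \ref{thmcd}) to define a homomorphism $\rho\colon\Aut(\Sigma_{2g})\to S_{2g+3}$, and you show $\ker\rho$ is trivial. For $\phi\in\ker\rho$ you deduce $\phi^*=\mathrm{id}$ on $\Pic$, hence $\phi_*=\mathrm{id}$ on $N_1$, and then you recover each $g$--plane $H_I$ from its line class $L_I$ via the forgetful fibrations $\pi_i$ of Section \ref{fibrations} and Theorem \ref{ful}. This replaces the paper's divisorial--intersection description of the $g$--planes and its explicit counting by a cleaner group--homomorphism framework that exploits Theorem \ref{ful} and Corollary \ref{numexrays} (the classification and number of extremal rays of $\NE(\Sigma_{2g})$) in an essential way.

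\textbf{What each approach buys.} The paper's argument stays entirely on the divisorial side and needs only Lemma \ref{eff}; it is combinatorially explicit but requires tracking how configurations of distinguished points can be permuted. Your argument is more structural: it packages the upper bound as injectivity of a single homomorphism and makes the role of the Mori cone transparent, at the cost of needing the full description of the fibers of $\pi_i$ (smooth fibers are $\mathbb{P}^1$ of class $L_J+L_{J^c}$, reducible fibers are $H_J\cup H_{J^c}$) and the primitivity and pairwise distinctness of the $L_I$. Both are valid; your version is arguably more in the spirit of the birational machinery developed in Section \ref{secbir}.
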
 
\begin{proof}
Let $\phi\in \Aut(\Sigma_{2g})$ be an automorphism. By the discussion in \ref{small}, $\phi$ induces a pseudo-automorphism, that is an automorphism in codimension two, 
$$\theta:=\psi^{-1}\circ\phi\circ\psi:X_1^{2g}\dasharrow X_1^{2g}$$ 
The pseudo-automorphism $\theta$ must preserve the set of the extremal rays of $\Eff(X_1^{2g})$. Let $D\subset X_1^{2g}$ be the union of the exceptional divisors $E_i$ and of the strict transforms of the hyperplanes generated by $2g$ of the $p_i$. By Lemma \ref{eff} any irreducible component of $D$ generates an extremal ray of $\Eff(X_1^{2g})$. Furthermore, any of these irreducible components is the unique element in its linear equivalence class. Therefore, $\theta$ must keep $D$ stable.

Set $\widetilde{D}=\psi(D)$. Then the automorphism $\phi$ stabilizes $\widetilde{D}$. As we did in Section \ref{linsubsodd}, we will now consider $g$-planes; check Section \ref{linsubsodd} for the needed definitions. Note that any $g$-plane $\gamma_i\in \textbf{C}$ is the intersection of $\binom{g+1}{g-1}$ divisors of type $\psi(H-\sum_{i\in I}E_i)$, and that any $g$-plane $\delta_i\in \textbf{D}$ is the intersection of $g$ divisors of type $\psi(E_i)$. Therefore $\phi$ stabilizes the configuration of $g$-planes $\textbf{C}\cup \textbf{D}$. 

Now, let $p\in \Sigma_{2g}$ be one of the distinguished points in \ref{distpoints}. Then $q = \phi(p)$ must be a distinguished point as well. Let us denote by $H_1,H_2$ the two $g$-planes intersecting in $p$, and by $\Pi_1,\Pi_2$ the two $g$-planes intersecting in $q$. Therefore, by \ref{distpoints} we have $\frac{(2g+3)!}{2((g+1)!)^2}$ choices for the image of $p$.

Let $p,p_1^1,\dots,p_{g+1}^1$ and $p,p_1^2,\dots,p_{g+1}^2$ be the distinguished points on $H_1$ and $H_2$ respectively. Similarly, we denote by $q,q_1^1,\dots,q_{g+1}^1$ and by $q,q_1^2,\dots,q_{g+1}^2$ the distinguished points on $\Pi_1$ and $\Pi_2$, respectively. Now, for the image of $p_1^1$ we have $2(g+1)$ possibilities, namely $q_1^1,\dots,q_{g+1}^1,q_1^2,\dots,q_{g+1}^2$. Once this choice is made, it is determined whether the image of $H_1$ via $\phi$ is either $\Pi_1$ or $\Pi_2$. Therefore, for the image of $p_2^1$ we have $g$ possibilities, for $p_3^1$ we have $g-1$ possibilities, and so on until we are left with just one possibility for $p_{g+1}^1$. Summing up, for the images of $p_1^1,\dots,p_{g+1}^1$ we have $2(g+1)!$ possibilities. Similarly, we have also $(g+1)!$ possibilities for the images of $p_1^2,\dots,p_{g+1}^2$. Finally, we have 
$$2(g+1)!(g+1)!\frac{(2g+3)!}{2((g+1)!)^2} = (2g+3)! = |S_{2g+3}|$$ 
possibilities. Now, assume that $\phi$ fixes the points $p,p_1^1,\dots,p_{g+1}^1, p_1^2,\dots,p_{g+1}^2$. Since by Corollary \ref{linaut} $\phi$ is induced by an automorphism of $\mathbb{P}^N$ and the points $p,p_1^1,\dots,p_{g+1}^1\in H_1$, $p,p_1^2,\dots,p_{g+1}^2\in H_2$ are in linear general position, then $\phi$ restricts to the identity on both $H_1$ and $H_2$.

Then, by the description of the configuration $\textbf{C}\cup \textbf{D}$ in \ref{confodd}, $\phi$ must fix all the distinguished points in $\textbf{C}\cup \textbf{D}$, and then Remark \ref{genposodd} yields that the automorphism $\phi$ must be the identity.
\end{proof}

\bibliographystyle{amsalpha}
\bibliography{Biblio}

\end{document}